\documentclass[reqno,11pt]{article}
\usepackage{mathrsfs}
\usepackage{amssymb}
\usepackage{amsthm}
\usepackage{amsmath}
\usepackage{amsfonts}
\usepackage{mathtools}
\usepackage{enumerate}
\usepackage{bm}

\usepackage{graphicx}

\usepackage{bbm}
\usepackage{mathrsfs}
\usepackage{amssymb}
\usepackage[thicklines]{cancel}

\usepackage[usenames,dvipsnames]{xcolor}
\usepackage{soul}

\usepackage{txfonts}

\usepackage{psfrag}
\usepackage[numbers,sort&compress]{natbib}
\usepackage{setspace,color,wrapfig}
\usepackage[colorlinks,linkcolor=blue,citecolor=cyan]{hyperref}

\numberwithin{equation}{section}
\newtheorem{theorem}{Theorem}[section]
\newtheorem{lemma}[theorem]{Lemma}
\newtheorem{corollary}[theorem]{Corollary}
\newtheorem{proposition}[theorem]{Proposition}
\newtheorem{problem}[theorem]{Problem}
\theoremstyle{definition}
\newtheorem{definition}[theorem]{Definition}
\newtheorem{assumption}[theorem]{Assumption}
\newtheorem{example}[theorem]{Example}
\newtheorem{remark}[theorem]{Remark}
\theoremstyle{remark}

\allowdisplaybreaks 

\begin{document}

	\title{ Structural stability of cylindrical transonic shocks to the three-dimensional Zeldovich-von Neumann-D\"oring combustion equations   }
	\author{
		Zihao Zhang\thanks{School of Mathematics, Jilin University, Changchun, Jilin Province, 130012, China, and Department of Mathematical Sciences, Seoul National University, Seoul 08826, Republic of Korea. Email: zhangzihao@jlu.edu.cn} \and
		Yan Zhou\thanks{School of Mathematical Sciences, Fudan University, Shanghai, 200433, China. Email: yanzhou@fudan.edu.cn}
	}\date{}
	\maketitle
	\newcommand{\de}{{\mathrm{d}}}
	\def\div{{\rm div\,}}
	\def\curl{{\rm curl\,}}
	\def\th{\theta}
	\newcommand{\ro}{{\rm rot}}
	\newcommand{\sr}{{\rm supp}}
	\newcommand{\sa}{{\rm sup}}
	\newcommand{\va}{{\varphi}}
	\newcommand{\me}{\mathcal{E}}
	\newcommand{\ml}{\mathcal{V}}
	\newcommand{\Md}{\mathcal{D}}
	\newcommand{\mg}{\mathcal{G}}
	\newcommand{\mh}{\mathcal{H}}
	\newcommand{\mf}{\mathcal{F}}
	\newcommand{\ms}{\mathcal{S}}
	\newcommand{\mt}{\mathcal{T}}
	\newcommand{\mn}{\mathcal{N}}
	\newcommand{\mb}{\mathcal{P}}
	\newcommand{\mm}{\mathcal{Q}}
	\newcommand{\mj}{\mathcal{J}}
	\newcommand{\mk}{\mathcal{K}}
	\newcommand{\my}{\mathcal{U}}
	\newcommand{\mw}{\mathcal{W}}
	\newcommand{\mq}{\mathcal{B}}
	\newcommand{\ma}{\mathcal{L}}
	\newcommand{\mc}{\mathcal{C}}
	\newcommand{\mi}{\mathcal{I}}
	\newcommand{\n}{\nabla}
	\newcommand{\e}{\tilde}
	\newcommand{\m}{\Omega}
	\newcommand{\h}{\hat}
	\newcommand{\x}{\bar}
	\newcommand{\q}{{\rm R}}
	\newcommand{\p}{{\partial}}
	\newcommand{\z}{{\varepsilon}}
	\def\be{\begin{eqnarray}}
		\def\ee{\end{eqnarray}}
	\def\p{\partial}
	\def\no{\nonumber}
	\def\eps {\epsilon}
	
	\def\De{\Delta}
	\def\om{\omega}
	\def\Om{\Omega}
	\def\f{\frac}
	\def\th{\theta}
	\def\la{\lambda}
	\def\lab{\label}
	\def\b{\bigg}
	\def\var{\varphi}
	\def\na{\nabla}
	\def\ka{\kappa}
	\def\al{\alpha}
	\def\La{\Lambda}
	\def\ga{\gamma}
	\def\Ga{\Gamma}
	\def\ti{\tilde}
	\def\wti{\widetilde}
	\def\wh{\widehat}
	\def\ol{\overline}
	\def\ul{\underline}
	\def\Th{\Theta}
	\def\si{\sigma}
	\def\Si{\Sigma}
	\def\oo{\infty}
	\def\q{\quad}
	\def\z{\zeta}
	\def\M{\mathbf}
	\def\co{\coloneqq}
	\def\eqq{\eqqcolon}
	\def\bt{\begin{theorem}}
		\def\et{\end{theorem}}
	\def\bc{\begin{corollary}}
		\def\ec{\end{corollary}}
	\def\bl{\begin{lemma}}
		\def\el{\end{lemma}}
	\def\bp{\begin{proposition}}
		\def\ep{\end{proposition}}
	\def\br{\begin{remark}}
		\def\er{\end{remark}}
	\def\bd{\begin{definition}}
		\def\ed{\end{definition}}
	\def\bpf{\begin{proof}}
		\def\epf{\end{proof}}
	\def\bex{\begin{example}}
		\def\eex{\end{example}}
	\def\bq{\begin{question}}
		\def\eq{\end{question}}
	\def\bas{\begin{assumption}}
		\def\eas{\end{assumption}}
	\def\ber{\begin{exercise}}
		\def\eer{\end{exercise}}
	\def\mb{\mathbb}
	\def\mbR{\mb{R}}
	\def\mbZ{\mb{Z}}
	\def\mbf{\mathbf}
	\def\u{\mbf{u}}
	\def\v{\mbf{v}}
	\def\U {\mbf{U}}
	
	\def\x{\mbf{x}}
	\def\mc{\mathcal}
	\def\mcS{\mc{S}}
	\def\lan{\langle}
	\def\ran{\rangle}
	\def\lb{\llbracket}
	\def\rb{\rrbracket}
	\def\div {\mathrm{div}\,}
	\def\curl {\mathrm{curl}\,}
	\pagestyle{myheadings} \markboth{\hfill Transonic   shocks  to the three-dimensional ZND   combustion  model     \hfill}{\hfill Transonic  shocks  to the three-dimensional   ZND   combustion  model \hfill}
	\begin{abstract}
		We address the structural stability of transonic shock solutions to the Zeldovich-von Neumann-D\"oring (ZND) combustion equations in a cylindrical domain under three-dimensional perturbations of the incoming supersonic flow and exit pressure. The perturbed transonic shock is completely free and its strength and position are uniquely determined by the incoming flow and the exit pressure. The solution shares the same regularity for the velocity, the pressure, the entropy and the fraction of unburned gas. The main difficulties are that the shock front is a free boundary  to be determined simultaneously with the subsonic flow and the equations for all states are coupled with each other due to the combustion process. One of the key ingredients of the analysis is to reformulate the three-dimensional ZND combustion equations into a deformation-curl system and four transport equations in terms of the deformation tensor and vorticity. Another one is an elaborate decomposition of the Rankine-Hugoniot conditions so that the shock front is uniquely determined by an algebraic equation and an unusual second order differential boundary condition is obtained on the shock front for the first order nonlocal deformation-curl system, from which an oblique boundary condition can be derived after homogenizing the curl system and introducing the potential function.
	\end{abstract}

	\begin{center}
		\begin{minipage}{5.5in}
			Mathematics Subject Classifications 2020:  35L65, 35M10, 76N10, 76N15.\\
			Key words: ZND combustion equations,  transonic shocks,  hyperbolic-elliptic mixed  system,   Rankine-Hugoniot conditions.
		\end{minipage}
	\end{center}
	
	\section{Introduction and  main results }\noindent
	\par In this paper, we are concerned with  the existence and stability of  transonic shock solutions to
	the    three-dimensional steady Zeldovich-von Neumann-D\"oring (ZND)  combustion  equations:
	\begin{equation}\label{1-1}
		\begin{cases}
			\p_{x_1}(\rho u_1)+\p_{x_2}(\rho u_2)+\p_{x_3}(\rho u_3)=0,\\
			\p_{x_1}(\rho u_1^2) + \p_{x_2}(\rho u_1 u_2) + \p_{x_3}(\rho u_1u_3)+\p_{x_1}P=0,\\
			\p_{x_1}(\rho u_1 u_2) + \p_{x_2}(\rho u_2^2) + \p_{x_3}(\rho u_2u_3)+\p_{x_2}P=0,\\
			\p_{x_1}(\rho u_1 u_3) + \p_{x_2}(\rho u_2 u_3) + \p_{x_3}(\rho u_3^2)+\p_{x_3}P=0,\\
			\p_{x_1}(\rho u_1 E+u_1 P)+\p_{x_2}(\rho u_2 E+u_2 P)+\p_{x_3}(\rho u_3 E+u_3 P)=0,\\
			\p_{x_1}(\rho u_1 Y)+\p_{x_2}(\rho u_2 Y)+\p_{x_3}(\rho u_3 Y)=-\rho Y\phi(T),\\
		\end{cases}
	\end{equation}
	in a cylindrical domain
	\begin{equation*}
		\Omega=\left\{{\textbf x}=(x_1,x_2,x_3):r_0<\sqrt{x_1^2+x_2^2}<r_1, \ -\theta_0<\arctan\frac{x_2}{x_1}<\theta_0, \ -L<x_3<L \right\},
	\end{equation*}
	where $0<\theta_0<\frac{\pi}{2}$, $0<r_0<r_1$ and $L>0 $ are  positive constants. In the above system, $\mathbf{u}=(u_1,u_2,u_3)$ denotes the velocity, $\rho$ the density, $P$ the pressure, $Y$ the mass fraction of unburned gas, $E$ the total energy, and $\phi(T)$ the ignition function.
	Typically, the ignition function takes the Arrhenius form
	\begin{equation*}
		\phi(T) =
		\begin{cases}
			T^a e^{-\frac{\mathcal{E}}{\mathcal{R}_0 T}}, & \mathrm{if}\ T > 0, \\
			0, & \mathrm{if}\ T \leq 0,
		\end{cases}
	\end{equation*}
	where $T$ is the temperature, $a$ is a positive constant, $\mathcal{R}_0$ is the universal gas constant, and $\mathcal{E}$ is the activation energy of the mixture. The total energy $E$ is given by
	\begin{equation*}
		E = \frac{1}{2}|\mathbf{u}|^2 + \mathfrak{e} + q_e Y,
	\end{equation*}
	where $\mathfrak{e}$ is the internal energy, and $q_e$ is a constant representing the binding energy of the unburned gas. For convenience, we set $q_e = 1$ throughout this paper. The other two thermodynamic variables are the temperature $T$ and entropy $S$, which are defined via the following thermodynamic relation
	\begin{equation*}
		T \, dS = d\mathfrak{e} - \frac{P}{\rho^2} d\rho.
	\end{equation*}
	Then the pressure $P$ and the internal energy $\mathfrak{e}$ can be regarded as functions of $(\rho, S)$:
	\begin{equation*}
		P = P(\rho, S), \quad \mathfrak{e} = \mathfrak{e}(\rho, S).
	\end{equation*}
	In this paper, we consider an ideal polytropic gas, for which the pressure and internal energy take the forms
	\begin{equation*}
		P = e^S \rho^\gamma \quad \text{and} \quad \mathfrak{e} = \frac{e^S \rho^{\gamma-1}}{\gamma - 1} = T,
	\end{equation*}
	where $\gamma > 1$ is the adiabatic exponent. Denote the Bernoulli function and local sonic speed by
	\begin{equation*}
		B = \frac{1}{2}|\mathbf{u}|^2 + \frac{\gamma P}{(\gamma - 1)\rho} \quad \text{and} \quad c(\rho, S) = \sqrt{\frac{\gamma P}{\rho}} = \sqrt{\gamma(\gamma - 1) T}.
	\end{equation*}
	The system \eqref{1-1} is hyperbolic for supersonic flows ($|\mathbf{u}| > c(\rho, S)$) and of hyperbolic-elliptic mixed type for subsonic flows ($|\mathbf{u}| < c(\rho, S)$).

	\par The ZND combustion model is a classical and well-studied mathematical model for describing the propagation of combustion waves generated by one-step exothermic chemical reactions (see \cite{WF85}). It consists of the Euler equations of gas dynamics coupled with a reaction equation for the combustible gas. A steady detonation wave in the ZND model can be viewed as a special shock solution of the Euler equations with a chemical reaction source term. Consequently, the analytical theories and techniques developed for transonic shock problems in classical gas dynamics provide the theoretical foundation and analytical framework for the study of the well-posedness and stability of combustion waves in the ZND system.
	\subsection{ Brief review}\noindent
	\par Over the past two decades, the stability of two types of transonic shock solutions to the steady Euler equations has been extensively studied. The first type is   transonic  shock solutions in flat nozzles with constant upstream supersonic and downstream subsonic states, where the shock position can be prescribed arbitrarily. The existence and stability of transonic shock solutions for steady potential flow model under various geometric configurations and boundary conditions were established in \cite{CGF03,CFM04,XY05,XY08}. For the full Euler equations,   the existence and stability of the transonic shock for  2-D and 3-D Euler flows with an appropriate end pressure were studied in \cite{CS05,CS08, CY08,XYY09}. In both cases, the existence results require the shock front to pass through  a given point. Without
	such an artificial assumption,  the authors in \cite{FX21}  established the existence and stability of transonic shock solutions
	in an almost flat finite nozzle with the
	exit pressure, where the shock position was uniquely determined by solving an elaborate linear free
	boundary problem. The corresponding results for three-dimensional axisymmetric nozzles  were also obtained in \cite{FG21}.
	\par The second type is   symmetric transonic shock solutons in divergent nozzles, such as radially symmetric solutions in divergent nozzles and spherically symmetric solutions in conical domains, for which the position of the shock front is uniquely determined by the exit pressure.  The authors in \cite{LXY09,LXY13} had proved the existence and stability of the transonic shock solutions in divergent sectors under general perturbation of the nozzle wall and the exit pressure. Similar results for the
	axisymmetric perturbations of the nozzle wall  and the pressure had been
	established in \cite{LXY10,WXX21}. The  stability of spherically
	symmetric subsonic flows and transonic shocks in a spherical shell was established  in \cite{LXY16} by requiring that the background solutions satisfy some ``Structural
	Conditions". Recently, the authors in \cite{WX23,WX25} removed the ``Structural Conditions"
	and established the existence and stability of cylindrical and spherical transonic shocks under three-dimensional perturbations of the incoming flow
	and the exit pressure. In
	\cite{WY24, WZZ25,Z26}, the authors  proved the structural stability of  transonic shock solutions in
	flat nozzles under an external force. It was
	shown that the external force has a stabilization effect on the transonic shock in the nozzle and the
	shock position is uniquely determined. Furthermore, the authors in \cite{WXY21,WYX21} studied  radial symmetric spiral  flows with or without shock in an annulus. Due to nonzero angular velocity, it was found in \cite{WXY21} that besides the  supersonic-subsonic shock in a divergent nozzle, there exists a supersonic-supersonic shock solution, where the downstream state may change smoothly from supersonic to subsonic.
	\par On the other hand, extensive research has also been devoted to initial and boundary value problems for the ZND combustion model. In \cite{CW03}, the global existence of weak solutions to the one-dimensional Cauchy problem was established for initial data with sufficiently small bounded variation. The Cauchy problem for two-dimensional supersonic reacting flows past a Lipschitz wedge with either a large or a small wedge angle was investigated in \cite{CXZ04,CKZ17}. For both the ZND and Chapman--Jouguet combustion models, the finite-time existence of multidimensional unsteady detonation waves was established in \cite{CO07}. The well-posedness of the one-dimensional piston problem for the combustion Euler equations was studied in \cite{HK23,KZ20}. In \cite{FSZ23}, the existence and stability of transonic shock solutions to the steady  ZND combustion system in a two-dimensional finite nozzle were established under the assumption of a small exothermic reaction rate. For the two-dimensional ZND combustion equations in a domain with a Lipschitz wall, the global existence of supersonic entropy solutions containing a strong contact discontinuity was proved in \cite{XZZ18}.  Furthermore, the authors in \cite{GHFWX24} established the existence and uniqueness of supersonic combustion flows with a contact discontinuity in two-dimensional finite nozzles. Most recently, the well-posedness of subsonic  and supersonic combustion flows was investigated in \cite{LS23,LS26,Z25}.
	\par  In this paper, we investigate how the strong coupling among the different states induced by the combustion reaction affects the structural stability of cylindrically symmetric transonic shocks and establish the existence and uniqueness of transonic shock solutions under three-dimensional perturbations of the incoming supersonic flow and the exit pressure.
	\subsection{Mathematical formulation  and main results}\noindent
	\par Introduce the cylindrical coordinates
	$(r, \theta, z)$
	\begin{eqnarray}\label{coor}
		r=\sqrt{x_1^2+x_2^2},\quad \theta=\arctan \frac{x_2}{x_1},\quad z=x_3.
	\end{eqnarray}
	Assume that the velocity,  the pressure, the entropy and the fraction of
	unburned gas are of the form
	\begin{equation*}
		\begin{aligned}
			&{\bf u}({\textbf x})= U_{1}(r,\th,z)\mathbf{e}_r+U_{2}(r,\th,z)\mathbf{e}_\theta+U_{3}(r,\th,z)\mathbf{e}_z, \\
			&S({\textbf x})=S(r,\th,z),\ \ P({\textbf x})=P(r,\th,z),\ \ Y({\textbf x})=Y(r,\th,z),\ \
		\end{aligned}
	\end{equation*}
	with
	\begin{equation*}
		\mathbf{e}_r =
		\begin{pmatrix}
			\cos \th  \\
			\sin \th \\
			0
		\end{pmatrix}, \quad
		\mathbf{e}_{\th } =
		\begin{pmatrix}
			- \sin \th  \\
			\cos \th\\
			0
		\end{pmatrix}, \quad
		\mathbf{e}_z =
		\begin{pmatrix}
			0 \\
			0 \\
			1
		\end{pmatrix}.
	\end{equation*}
	Then
	the steady compressible ZND combustion  equations  \eqref{1-1} in the cylindrical coordinates take the following
	form
	\begin{eqnarray}\label{1-2}
		\begin{cases}
			\begin{aligned}
				&\partial_r(r\rho U_1)+\partial_{\theta}(\rho U_2)+\p_z(r\rho U_3)=0,\\
				&\rho\left(U_1\partial_r +\frac{U_2}{r}\partial_{\theta}+U_3\p_z\right)U_1+\partial_r P-\frac{\rho U_2^2}r=0,\\
				&\rho\left(U_1\partial_r +\frac{U_2}{r}\partial_{\theta}+U_3\p_z\right)U_2
				+\frac{\partial_{\theta}P}{r}+\frac{\rho U_1U_2}{r}=0,\\
				&\rho\left(U_1\partial_r +\frac{U_2}{r}\partial_{\theta}+U_3\p_z\right)U_3+\partial_{z}P=
				0,\\
				&\rho\left(U_1\partial_r +\frac{U_2}{r}\partial_{\theta}+U_3\p_z\right)S=\frac{\rho\phi(T)}{T} Y,\\
				&\rho\left(U_1\partial_r +\frac{U_2}{r}\partial_{\theta}+U_3\p_z\right)Y=-\rho Y\phi(T).
			\end{aligned}
		\end{cases}
	\end{eqnarray}
	The domain $\m$ in the cylindrical coordinates  becomes
	\begin{equation*}
		\Omega=\{(r,\th,z) : r_0<r<r_1,\ (\th,z)\in D\}, \ \ D=(-\theta_0,\theta_0)\times(-L,L).
	\end{equation*}
	The entrance, exit and   boundaries of    $\Omega$   are denoted by
	\begin{equation*}
		\begin{aligned}
			\Gamma_{\text{en}} &= \left\{ (r,\theta,z) : r = r_0,\ -\theta_0 < \theta < \theta_0,\ -L < z < L \right\}, \\
			\Gamma_{\text{ex}} &= \left\{ (r,\theta,z) : r = r_1,\ -\theta_0 < \theta < \theta_0,\ -L < z < L \right\}, \\
			\Gamma_{\theta_0}^\pm &= \left\{ (r,\theta,z) : r_0 \leq r \leq r_1,\ \theta = \pm \theta_0,\ -L \leq z \leq L \right\}, \\
			\Gamma_{L}^\pm &= \left\{ (r,\theta,z) : r_0 \leq r \leq r_1,\ -\theta_0 \leq \theta \leq \theta_0,\ z = \pm L \right\}.
		\end{aligned}
	\end{equation*}
	\par Next, we  consider  a  class of smooth cylindrically symmetric transonic shock solutions.
	More precisely,    the
	flow is described by smooth functions of the form   $(\bar U^{\pm},0,0, \bar \rho^{\pm}, \bar S^{\pm},0)(r)$ on $  r_b<r<r_1$ and $ r_0<r<r_b $ respectively, which solves
	\begin{align}\label{1-15}
		\begin{cases}
			(r\bar\rho^{\pm} \bar U^{\pm})'=0,\\
			\bar\rho^{\pm}\bar U^{\pm}(\bar U^{\pm})'+{(\bar P^{\pm})'} =0,\\
			(\bar S^{\pm})'=0,
		\end{cases}
	\end{align}
	with a discontinuity $r=r_b$ satisfying the physical entropy condition
	$ [\bar P]>0 $
	and the following Rankine-Hugoniot condition
	\begin{align}\label{1-16}
		[\bar \rho \bar U]=[\bar\rho \bar U^2+\bar P]=
		[\bar B]=0,
	\end{align}
	and  the boundary conditions
	\begin{eqnarray}\label{1-17}
		\begin{cases}
			\bar \rho(r_0)=\rho_0^-,\ \bar U(r_0)= U_{0}^-, \ \  \bar S(r_0)= S_0^-,\\
			\bar P(r_1)= P_{e}.
		\end{cases}
	\end{eqnarray}
	Here  $[f]:=f(r_b+)-f(r_b-)$ denotes the jump of $f$ at $r=r_b$ and
	\begin{equation*}
		\bar P^\pm(r)=(e^{\bar S^\pm} (\bar\rho^\pm)^{\gamma})(r),  \ \  \bar B^\pm(r)=\dfrac{1}{2}(\bar U^\pm)^2(r)+\frac{\gamma e^{\bar S^\pm }(\bar\rho^\pm)^{\gamma-1} }{\gamma-1} (r).
	\end{equation*}
	Then it follows from \eqref{1-15}-\eqref{1-17} that
	\begin{equation}\label{1-18}
		\begin{cases}
			(\bar \rho^- \bar U^-)(r)=\frac{r_0 \rho_0^- U_{0}^-}{r}, \ \ \bar B^-(r)=  B_0^- , \ \ \bar S^-(r)=  S_0^-,\ \ r\in(r_0,r_b),\\
			(\bar \rho^+ \bar U^+)(r)=\frac{r_0  \rho_0^-U_{0}^-}{r},\ \
			\bar B^+(r)=  B_0^-,\ \ \bar S^+(r)=  S_0^+,  \ \ r\in(r_b,r_1),
		\end{cases}
	\end{equation}
	where $S_0^+$ is to be determined by \eqref{1-16}.
	
	\begin{proposition}\label{p1}
		Fix $ \gamma> 1 $.   Given constants    $\rho_0^->0$, $ U_{0}^->0 $ and $S_0^->0 $ satisfying
		\begin{equation*}
			(U_{0}^-)^2>\gamma e^{S_0^-}(\rho_0^-)^{\gamma-1},
		\end{equation*}  there exist two positive constants $P_1 $ and $P_0$ such that if the exit pressure   $P_{e}\in(P_1,P_0)$,  the   system \eqref{1-15} with \eqref{1-17} has  a unique piecewise smooth solution
		\begin{equation}\label{1-20}
			\bar{\bm{\Psi}}(r)=
			\begin{cases}
				\bar{\bm{\Psi}}^-(r):=(\bar{U}^{-}(r),0, 0, \bar{P}^{-}(r),{S}_0^{-},0), &  {\rm{if}} \ r_0<r<r_b, \\
				\bar{\bm{\Psi}}^+(r):=(\bar{U}^{+}(r),0, 0, \bar{P}^{+}(r),{S}_0^{+},0), &  {\rm{if}} \ r_b<r<r_1,\\
			\end{cases}
		\end{equation}
		with a shock located at $r\equiv r_b$. Across the shock, the Rankine-Hugoniot condition \eqref{1-16} holds. Furthermore, the solution satisfies $(\bar U^-)^2> c^2(\bar \rho^-, S_0^-)$ for $r\in(r_0,r_b)$, $(\bar U^+)^2<c^2(\bar \rho^+, S_0^+)$ for $r\in(r_b,r_1)$.
	\end{proposition}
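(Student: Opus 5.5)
Fix the mass flux $m:=r_0\rho_0^-U_0^-$ and the Bernoulli constant $B_0^-$. The plan is to reduce \eqref{1-15} to a scalar ODE/algebraic problem on each side of the shock, solve the supersonic branch from $r_0$, apply the Rankine--Hugoniot relations at a trial shock radius $r_b$, solve the subsonic branch to $r_1$, and then show that the resulting exit pressure is a strictly monotone function of $r_b$.

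\textbf{Supersonic branch.} For $r>r_0$, \eqref{1-18} gives
\begin{equation*}
\tfrac12 \bar U^2+\frac{\gamma e^{S_0^-}}{\gamma-1}\Big(\frac{m}{r\bar U}\Big)^{\gamma-1}=B_0^- .
\end{equation*}
For fixed $r$, the map $U\mapsto \tfrac12U^2+\frac{\gamma e^{S}}{\gamma-1}(m/(rU))^{\gamma-1}$ is convex. It attains its minimum exactly at the sonic state $U^2=c^2$, and this minimum increases in $r$ like $r^{-2(\gamma-1)/(\gamma+1)}$. Since the minimum decreases as $r$ grows, the supersonic root exists for all $r\ge r_0$. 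Differentiating the relation gives $(\bar U^-)'=\bar U^- c^2/(r(\bar U^2-c^2))>0$, so on $[r_0,\infty)$ there is a unique smooth supersonic solution $\bar U^-$ with $\bar U^->\bar c^-$. The Mach number $M^-$ increases in $r$.

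\textbf{Shock relations.} At a trial radius $r_b$, conservation of $\rho U$, $\rho U^2+P$ and $B$ yields the classical Prandtl relation $\bar U^+\bar U^-=c_*^2=\frac{2(\gamma-1)}{\gamma+1}B_0^-$. This determines $\bar U^+(r_b)$ uniquely as a subsonic value, with $[\bar P]>0$ because $M^->1$. Hence $\bar\rho^+(r_b)$, $\bar P^+(r_b)$ and $S_0^+(r_b)$ are determined, and $S_0^+>S_0^-$. The subsonic solution is then the subsonic root of the same algebraic relation with $S_0^+$ in place of $S_0^-$. This root exists for all $r\ge r_b$, since the minimum of the convex map keeps decreasing in $r$. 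It is smooth, stays subsonic, and satisfies $(\bar U^+)'<0$ and $(\bar P^+)'>0$. This construction gives the exit pressure $P_e=\mathcal P(r_b):=\bar P^+(r_1;r_b)$, defined for $r_b\in[r_0,r_1]$.

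\textbf{Monotonicity (main obstacle).} I will show $\frac{d}{dr_b}\mathcal P(r_b)<0$ and then set $P_0=\mathcal P(r_0)$ and $P_1=\mathcal P(r_1)$. Given this, the intermediate value theorem together with strict monotonicity gives a unique $r_b\in(r_0,r_1)$ for every $P_e\in(P_1,P_0)$; uniqueness of the piecewise smooth solution follows because each branch is uniquely determined, as shown above.

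To prove monotonicity, note first that $M^-(r_b)$ increases with $r_b$, so the shock gets stronger and $S_0^+$ increases. Along the subsonic branch, $m$ and $B_0^-$ are fixed and only the entropy changes. Since $P=\rho^\gamma e^S$ and the stagnation pressure equals $P_{\rm stag}=\mathrm{const}\cdot e^{-S/(\gamma-1)}$ (the stagnation density is $\propto e^{-S/(\gamma-1)}$ for fixed $B$), I will write $\bar P^+(r_1)=P_{\rm stag}(S_0^+)\,f(M^+(r_1))$, where $f$ is the isentropic pressure ratio. The exit Mach number $M^+(r_1)$ is fixed through the area-Mach relation
\begin{equation*}
r_1\cdot g(M)=m\,e^{S/(\gamma-1)}\cdot\mathrm{const},
\end{equation*}
where $g$ is increasing on the subsonic range. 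Increasing $S$ therefore increases $M^+(r_1)$, which decreases $f$, while $P_{\rm stag}$ also decreases. Both effects lower $\bar P^+(r_1)$, so $\mathcal P$ is strictly decreasing in $r_b$. The remaining work is a careful check of the sign computations, which I expect to be the only delicate part.
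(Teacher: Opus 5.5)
Your proposal is correct and follows the classical argument that the paper itself does not reproduce but only cites (\cite[Section 147]{CF48}, \cite[Theorem 1.1]{xy08b}): solve the supersonic branch, apply the normal-shock (Prandtl) relations at a trial radius $r_b$, and show that the exit pressure is strictly decreasing in $r_b$, which you do neatly by observing that $r_b$ enters $\bar P^+(r_1)$ only through $S_0^+$, which increases with $M^-(r_b)$, while $\bar P^+(r_1)$ decreases in $S_0^+$ by the stagnation-pressure and area--Mach relations. The one slip is the phrase that the minimum of the convex map ``increases'' in $r$; it actually decreases like $r^{-2(\gamma-1)/(\gamma+1)}$, which is what your next sentence correctly uses.
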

	\par The  cylindrically
	symmetric transonic shock solution  $ \bar{\bm{\Psi}}$    will be called the background solution.  Clearly, one can extend the supersonic and subsonic parts
	of $\bar{\bm{\Psi}} $ in a natural way, respectively. For convenience, we still call the
	extended subsonic and supersonic solutions $ \bar{\bm{\Psi}}^+$
	and $  \bar{\bm{\Psi}}^- $. For detailed properties of this cylindrically symmetric transonic shock solution, one may refer to \cite[Section 147]{CF48} or \cite[Theorem 1.1]{xy08b}. The
	goal of this paper is to establish the structural
	stability of the background solution under three-dimensional
	perturbations of the incoming supersonic flow and  the exit pressure.
	\par  Let  the  incoming  supersonic flow at the entrance $ r=r_0 $ be prescribed as
	\begin{equation}\label{1-7-SUP}
		\bm {\Psi}^-(r_0,\th,z)=\bar{\bm{\Psi}}^-(r_0)
		+\sigma \left(U_{1,0}^-, U_{2,0}^-,U_{3,0}^-, P_{0}^-,S_0^-,Y_0^-\right)(\th,z), \quad (\th,z) \in D,
	\end{equation}
	where $ \sigma $ is sufficiently small and $  \left(U_{1,0}^-, U_{2,0}^-,U_{3,0}^-, P_{0}^-,S_0^-,Y_0^- \right)\in \left(C^{2,\alpha}(\overline{ D})\right)^6 $ satisfies the following compatibility conditions
	\begin{equation}\label{1-8-SUP}
		\begin{cases}
			\left(U_{2,0}^-, \p_{\th}^2 U_{2,0}^- \right) (\pm \th_0, z) =  \p_{\th} \left(U_{1,0}^-, U_{3,0}^-, P_0^-,S_0^-,Y_0^- \right) (\pm \th_0, z) = 0, & z \in [- L, L], \\
			\left(U_{3,0}^-, \p_{z}^2 U_{3,0}^- \right) (\th, \pm L) = \p_{z} \left(U_{1,0}^- , U_{2,0}^-, P_0^-,S_0^-,Y_0^- \right) (\th, \pm L) = 0, & \th \in [-\th_0, \th_0].
		\end{cases}
	\end{equation}
	On the nozzle walls, the flow satisfies  the slip boundary condition
	\begin{equation}\label{1-9-SUP}
		\begin{cases}
			U_2(r,\pm \th_0,z) =0, &   (r, z) \in \Gamma_{\theta_0}^\pm,\\
			U_3(r,\th,\pm L) =0, &  (r, \th) \in \Gamma_{L}^\pm.
		\end{cases}
	\end{equation}
	Then the well-posedness    of the ZND combustion equations \eqref{1-2} with \eqref{1-7-SUP} and \eqref{1-9-SUP} for supersonic flows can be obtained by
	the classical theory developed  in \cite{BS07}. More precisely, there exists a constant $\sigma_0 > 0$ depending only on the boundary data, such that for any $0 < \sigma < \sigma_0$, the system \eqref{1-2}  with  \eqref{1-7-SUP} and \eqref{1-9-SUP} has a unique  solution $\bm {\Psi}^- = (U_1^-, U_2^-, U_3^-, P^-,S^-,Y^-) \in \left(C^{2, \alpha} (\overline{\Omega})\right)^6$ satisfying the estimate
	\begin{equation}\label{1-10-SUP}
		\| (U_1^-, U_2^-, U_3^-, P^-,S^-,Y^-) - (\bar{U}^-, 0, 0, \bar{P}^-, S_0^-,0) \|_{C^{2, \alpha} (\overline{\Omega})} \le C_0 \sigma,
	\end{equation}
	and the compatibility conditions
	\begin{equation}\label{1-11-SUP}
		\begin{cases}
			\left(U_2^-, \p_{\th}^2 U_2^- \right) (r, \pm \th_0, z) = \p_{\th} \left(U_1^-, U_3^-, P^-,S^-,Y^- \right) (r, \pm \th_0, z) = 0,   &  (r, z) \in \Gamma_{\theta_0}^\pm,\\
			\left(U_3^-, \p_{z}^2 U_3^- \right) (r,\th,\pm L) = \p_{z} \left(U_1^-, U_2^-, P^-,S^-,Y^- \right)  (r,\th,\pm L) = 0,   & (r, \th) \in \Gamma_{L}^\pm.
		\end{cases}
	\end{equation}
	\par At the exit of the nozzle, the end pressure is prescribed by
	\begin{equation}\label{1-12-sub}
		P(r_1,\theta,z)= P_e+\sigma P_{ex}(\th,z), \quad (\th,z) \in D,
	\end{equation}
	where 
	$ P_{e} (x_2,x_3)\in C^{2,\alpha}(\overline{ D}) $ satisfies the following compatibility conditions
	\begin{equation}\label{1-13-sub}
		\begin{cases}
			\p_{\theta} P_{ex}(\pm \th_0,z)= 0, & z \in [- L, L],\\
			\p_{z} P_{ex}(\th,\pm L) =0, & \th \in [-\th_0, \th_0].
		\end{cases}
	\end{equation}
	In this paper, we want to look for a piecewise smooth solution $ \bm {\Psi}$, which jumps only at a shock front $ \ms=\{(r,\th,z):r=\xi(\th,z),(\th,z)\in D\}$.
	More precisely, $ {\bm \Psi} $ has the  form
	\begin{equation}\label{1-17-sub}
		\bm {\Psi}=
		\begin{cases}
			\bm {\Psi}^-:=(U_1^-,U_2^-,U_3^-,P^-,S^-,Y^-), & {\rm{in}}\ \ \Omega^-=\{(r,\th,z):r_0<r<\xi(\th,z),(\th,z)\in D\},\\
			\bm {\Psi}^+:=(U_1^+,U_2^+,U_3^+,P^+,S^+,Y^+), & {\rm{in}}\  \ \Omega^+=\{(r,\th,z):\xi(\th,z)<r<r_1,  (\th,z)\in D\},\\
		\end{cases}
	\end{equation}
	and satisfies the following  Rankine-Hugoniot conditions on  the shock surface $ \ms $:
	\begin{equation}\label{1-18-sub}
		\begin{cases}
			[\rho U_1]-\frac1\xi\p_{\th}\xi[\rho U_2]-\p_{z}\xi[\rho U_3]=0,\\
			[\rho U_1^2+P]-\frac1\xi\p_{\th}\xi[\rho U_1U_2]-\p_{z}\xi[\rho U_1U_3]=0,\\
			[\rho U_1U_2]-\frac1\xi\p_{\th}\xi[\rho U_2^2+P]-\p_{z}\xi[\rho U_2U_3]=0,\\
			[\rho U_1U_3]-\frac1\xi\p_{\th}\xi[\rho U_2 U_3]-\p_{z}\xi[\rho U_3^2+P]=0,\\
			[B]=0,\\
			[Y]=0.
		\end{cases}
	\end{equation}
	\par The main result in this paper is stated as follows.
	\begin{theorem}\label{main}
		Assume that the compatibility conditions  \eqref{1-8-SUP} and \eqref{1-13-sub}  hold. There exists a suitable constant $\sigma_0 > 0$ depending only on the background solution $\bar{\bm \Psi}$ and the boundary data $(U_{1,0}^-, U_{2,0}^-,U_{3,0}^-, P_{0}^-,S_0^-,Y_0^-,P_e) $ such that if $0 < \sigma < \sigma_0$, the problem \eqref{1-2} with \eqref{1-7-SUP}, \eqref{1-9-SUP} \eqref{1-12-sub} and \eqref{1-18-sub} has a unique solution ${\bm \Psi}^+ = (U_1^+, U_2^+, U_3^+, P^+,S^+,Y^+)  $ with the shock front $\ms: r = \xi (\th,z)$ satisfying the following properties.
		\begin{enumerate}[\rm(1)]
			\item The function $\xi  \in C^{3, \alpha} (\overline{D})$ satisfies the estimate
			\begin{equation}\label{est-xi}
				\|  \xi - r_b \|_{C^{3, \alpha} (\overline{D})} \le C_0 \sigma,
			\end{equation}
			and the compatibility conditions
			\begin{equation}\label{bc-xi}
				\begin{cases}
					\p_{\th} \xi (\pm \th_0, z)  = \p_{\th}^3 \xi(\pm \th_0, z)   = 0,  &  z\in [-L, L], \\
					\p_{z} \xi (\th, \pm L)  = \p_{z}^3 \xi (\th, \pm L) = 0,  &  \th \in [-\th_0, \th_0].
				\end{cases}
			\end{equation}
			\item The solution ${\bm \Psi}^+ = (U_1^+, U_2^+, U_3^+, P^+,S^+,Y^+) \in \left(C^{2, \alpha} (\overline{\Omega^+})\right)^6$ satisfies the entropy condition
			\begin{equation}\label{1-19-dsub}
				S^{+}(\xi(\th,z),\th,z)>S^{-}(\xi(\th,z),\th,z)\  {\rm{on}}\ r=\xi(\th,z), \ \ (\th,z) \in D,
			\end{equation}
			and the estimate
			\begin{equation}\label{est-solu}
				\| {\bm \Psi}^+ -\bar{\bm \Psi}^+ \|_{C^{2, \alpha } (\overline{\Omega^+})} \le C_0 \sigma,
			\end{equation}
			with the compatibility conditions
			\begin{equation}\label{bc-solu}
				\begin{cases}
					(U_2^+, \p_{\th}^2 U_2^+) (r, \pm \th_0, z) = \p_{\th} (U_1^+, U_3^+, P^+,S^+,Y^+) (r, \pm \th_0, z) = 0, &  (r, z) \in [\xi , r_1] \times [-L, L], \\
					(U_3^+, \p_{z}^2 U_3^+) (r, \th, \pm L) = \p_{z} (U_1^+, U_2^+, P^+,S^+,Y^+)(r, \th, \pm L)= 0, &  (r, \th) \in [\xi , r_1] \times [-\th_0, \th_0].
				\end{cases}
			\end{equation}
		\end{enumerate}
		The above positive estimate  constant $C_0 $  depends only on the background solution and the  incoming  supersonic flow and the exit pressure.
	\end{theorem}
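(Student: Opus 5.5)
The proof is a nonlinear iteration (a contraction mapping) around the background solution $\bar{\bm\Psi}$ from Proposition \ref{p1}. The supersonic part $\bm\Psi^-$ is already given in $\overline{\Omega}$ with the estimate \eqref{1-10-SUP}. We straighten the unknown shock by the change of variables
\begin{equation*}
y_1=\frac{r-\xi(\th,z)}{r_1-\xi(\th,z)}(r_1-r_b)+r_b,\qquad (y_2,y_3)=(\th,z),
\end{equation*}
which maps $\Omega^+$ onto the fixed domain $(r_b,r_1)\times D$. The unknowns are then $\bm\Psi^+$, written as a perturbation of $\bar{\bm\Psi}^+$, and $\xi$, written as $r_b$ plus a perturbation.

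\textbf{Reformulation of the equations.} Following the strategy announced in the abstract, we rewrite \eqref{1-2} in the subsonic region as a deformation--curl system for the velocity, together with four transport equations for $B$, $S$, $Y$ and one vorticity-type quantity. The key structural point is that combustion enters the Bernoulli and entropy equations only through the source $\rho Y\phi(T)$. Since $[Y]=0$ and $Y^-=O(\sigma)$, all source terms are $O(\sigma)$ and can be treated as perturbations. The transport equations are integrated along the (nearly radial) streamlines starting from the shock. Their data on $\ms$ come from the Rankine--Hugoniot conditions \eqref{1-18-sub}: $[B]=0$, $[Y]=0$, and the jump of $S$ determined by the mass and normal-momentum conditions. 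The vorticity is then expressed through $\curl$ of the velocity using the Crocco relation, in which $\na B$, $\na S$ and $\na Y$ appear.

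Next we decompose the Rankine--Hugoniot conditions. Linearizing \eqref{1-18-sub} about $r=r_b$ gives:
\begin{enumerate}[(a)]
\item an algebraic relation linking $\xi-r_b$ to the normal velocity and pressure on the shock;
\item two tangential conditions, which say that $(U_2^+,U_3^+)$ equals a known quantity plus $\bar U^+\nabla_{\th,z}\xi$ up to jump factors.
\end{enumerate}
Taking the tangential divergence of (b) and eliminating $\xi$ with (a) produces a second-order condition on the shock for the first-order velocity system. We then homogenize the curl part: we solve a div--curl problem with tangential boundary conditions on the side walls. This reduces the remainder to a potential $\varphi$ satisfying a second-order elliptic equation of the form $\p_r(k_1(r)\p_r\varphi)+\frac{k_2(r)}{r^2}\Delta_{\th,z}\varphi+\dots=F$. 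Its boundary conditions are: the exit pressure \eqref{1-12-sub}, linearized, becomes a Dirichlet-type or oblique condition at $y_1=r_1$; the shock condition becomes an oblique condition at $y_1=r_b$; and Neumann conditions hold on $\Gamma^\pm_{\th_0}$ and $\Gamma^\pm_L$.

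\textbf{Main obstacle: the linearized elliptic problem.} We expand $\varphi$ in the Neumann eigenfunctions $\cos(m\pi(\th+\th_0)/(2\th_0))\cos(n\pi(z+L)/(2L))$ of $D$. For each mode this gives a two-point ODE boundary value problem in $r$. Uniqueness reduces to showing that the homogeneous problem has only the trivial solution for every mode, and in particular for the zero mode, which fixes the shock position. I expect this to follow from sign conditions on the background coefficients:
\begin{enumerate}[(i)]
\item $\bar U^+<\bar c^+$;
\item the sign of $(\bar P^+)'$ in the divergent nozzle, which is exactly what determines $r_b$ uniquely from $P_e$ in Proposition \ref{p1}.
\end{enumerate}
I would first verify this with an energy/maximum-principle argument, or an explicit ODE monotonicity argument for the zero mode. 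Once uniqueness holds, Fredholm theory gives existence. Schauder estimates up to the corners then give $C^{2,\alpha}$ regularity. For this we extend the data evenly across the walls, which is allowed by the compatibility conditions \eqref{1-8-SUP}, \eqref{1-13-sub}, \eqref{1-11-SUP}; the resulting even/odd symmetry gives \eqref{bc-solu}. The gain of one derivative for $\xi$ (the $C^{3,\alpha}$ bound \eqref{est-xi}) follows from (b), since $\nabla_{\th,z}\xi$ is expressed through $C^{2,\alpha}$ traces of $U_2^+$ and $U_3^+$.

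\textbf{Closing the argument.} We set up a map on a ball of radius $C\sigma$ in the norm
\begin{equation*}
\|\bm\Psi^+-\bar{\bm\Psi}^+\|_{C^{2,\alpha}}+\|\xi-r_b\|_{C^{3,\alpha}}.
\end{equation*}
The map freezes the nonlinear terms, solves the transport equations and the linear elliptic problem above, and then updates $\xi$ through the algebraic relation (a). The estimates show the map sends the ball into itself, and a difference estimate in a weaker norm ($C^{1,\alpha}$) shows it is a contraction. Together these give existence and uniqueness for small $\sigma$. The entropy condition \eqref{1-19-dsub} follows by continuity from $S_0^+>S_0^-$, which in turn follows from $[\bar P]>0$.
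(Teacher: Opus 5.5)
Your outline matches the paper's architecture: a deformation--curl system plus transport equations for $Y,B,S,\tilde\omega_1$, the algebraic equation for the shock, the second-order shock condition, homogenization with a potential, and contraction in $C^{1,\alpha}$. However, it misses the ingredient that makes the iteration close.

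\textbf{The entropy cannot be frozen.} By the Rankine--Hugoniot conditions, the entropy on the shock is $S^+-S_0^+=b_2(\xi-r_b)+O(\sigma+|\mathbf V|^2+|\xi-r_b|^2)$ with $b_2>0$. So $B^+-B_0^-$ and $Y^+$ are $O(\sigma)$ plus quadratic terms, but $S^+$ is \emph{first order} in the shock displacement.

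In your scheme the transport equations are solved before $\xi$ is updated, so $b_2(\hat\xi-r_b)$ enters as frozen data in two places:
\begin{itemize}
\item in the curl equations, through the terms $\frac{e^S\rho^{\gamma-1}}{(\gamma-1)U_1}\partial_z S$ and $\frac{e^S\rho^{\gamma-1}}{(\gamma-1)U_1}\frac1r\partial_\theta S$ of the Crocco relations;
\item in the exit condition $W_1+\frac{e^{S_0}\bar\rho^{\gamma-1}}{(\gamma-1)\bar U}W_4=\cdots$ at $y_1=r_1$.
\end{itemize}
The map $\hat\xi\mapsto\xi$ then has an $O(1)$ linear part. For instance, on the average over $D$ one gets $\langle\xi-r_b\rangle=a-\beta\langle\hat\xi-r_b\rangle+(\text{quadratic})$, with $a=O(\sigma)$ and $\beta=\frac{1-\bar M^2(r_1)}{\gamma\bar M^2(r_1)}$. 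This $\beta$ exceeds $1$ whenever $\bar M^2(r_1)<\frac{1}{\gamma+1}$, so neither the self-map nor the contraction estimate holds.

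\textbf{What the paper does instead.} It substitutes $W_7=\frac{1}{b_1}W_1(r_b,y')+\cdots$ into the entropy data, so that $W_4=\frac{b_2}{b_1}W_1(r_b,y')+(\text{small})$. This keeps the coupling inside the linear problem as \emph{nonlocal} terms:
\begin{itemize}
\item $d_5(y_1)\partial_{y_3}W_1(r_b,y')$ and $\frac{d_5(y_1)}{y_1}\partial_{y_2}W_1(r_b,y')$ in the curl equations;
\item $W_1(r_1,y')+d_5(r_1)W_1(r_b,y')=q_4$ at the exit.
\end{itemize}
With $M_1+d_5M_1(r_b,\cdot)=\partial_{y_1}\varphi$, the potential satisfies an equation containing $-b_0b_1d_6(y_1)\varphi(r_b,y')$. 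Its boundary conditions are the oblique condition $\partial_{y_1}\varphi-b_4\varphi=h_1$ at $y_1=r_b$ and a Neumann condition at $y_1=r_1$. The zero-mode uniqueness you want then comes from $d_1,b_4,b_0b_1,d_6>0$; this is the precise form of your heuristic (ii). Your mode-by-mode ODE analysis can handle such a term, since it is rank one per mode. But the local elliptic problem you describe does not contain it, so the nonlocal operator has to be derived first.

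\textbf{The div--curl step also fails as stated.} In the iteration, $\tilde\omega_1$ comes from its transport equation and the other two components from the Crocco relations, all with coefficients taken from the previous iterate. The resulting curl data $(G_2,G_3,G_4)$ are therefore not divergence-free in general. A div--curl system with normal boundary conditions has no solution for such data. The paper enlarges the system by an auxiliary function $\Pi$ with homogeneous boundary data, obtained from a Poisson problem so that $\bigl(G_2-\partial_{y_1}\Pi,\ G_3-\frac{1}{y_1}\partial_{y_2}\Pi,\ G_4-\partial_{y_3}\Pi\bigr)$ is divergence-free. It then proves $\Pi\equiv 0$ at the fixed point, using the commutation relations of the operators $J_i^{W_7}$ and the maximum principle. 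You need this or an equivalent device.
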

	\begin{remark}\label{re1}
		{\it To the best knowledge of the authors, Theorem \ref{main} is   the first existence and uniqueness
			result on the three-dimensional transonic shock solution  to the steady ZND combustion  equations.}
	\end{remark}
	\subsection{Key ideas for the proof }\noindent
	\par  Now we give the key ideas for the proof of the main result in
	this paper. Mathematically, the transonic shock problem can be formulated as a free boundary value problem to the steady ZND combustion equations which is hyperbolic-elliptic mixed in subsonic regions, whose effective decomposition into elliptic and hyperbolic modes is crucial for the solvability of the nonlinear free boundary problem. In terms of the deformation tensor and vorticity developed in \cite{WX19}, we   reformulate the three-dimensional ZND combustion model into four transport equations for the fraction of unburned gas, the Bernoulli quantity, the entropy and the first component of the vorticity and a deformation-curl system for the velocity field.
	Compared with the steady
	Euler equations, the primary challenge  lies in the strong coupling phenomenon due to the combustion process,  in which all physical quantities  are coupled with each other. For the steady ZND model, the chemical reaction leads to the fact that the Bernoulli function and entropy are no longer conserved along streamlines. Instead, their transport equations involve highly nonlinear inhomogeneous source terms, which are jointly driven by the unburned gas mass fraction and the ignition function. Therefore, to avoid the loss of derivatives, we must derive precise estimates of these non-local source terms along the characteristic trajectories, which makes it more complicated to obtain the structural stability of the shock solution by the iteration method than that in \cite{WX23}.
	\par To solve this free boundary problem, our first step is to  reformulate the Rankine-Hugoniot jump conditions into  \eqref{rh2} and \eqref{rh10}, which
	\begin{itemize}
		\item shows the mechanism of determining uniquely the shock front by an algebraic equation whose principal part is a scalar multiple of the difference of the radial velocity;
		\item provides the boundary data on the shock front for the entropy, the Bernoulli quantity, the fraction of unburned gas and the first component of the vorticity (see \eqref{rh18}), which represent the hyperbolic modes;
		\item provides an unusual second order differential boundary condition on the shock front to the first order deformation-curl system for the velocity field (see \eqref{shock19}), which represents the elliptic modes.
	\end{itemize}
	\par Next, we use the  iteration method to obtain the structural stability of  cylindrical transonic shock solutions. Different from \cite{WX23}, the combustion reaction procedure leads to the strong coupled phenomena for all states.  We will develop a new iteration scheme based on an observation that the source terms for the equations of $(U_1^+,U_2^+,U_3^+,B^+,S^+)$ contain $ Y^+$, and $ Y^+$  can be regarded as a higher-order term. To this end, we first solve the transport equation for $ Y^+$, substitute its expression into the transport equations for the Bernoulli quantity and the entropy, and then solve them. Using their representation formulas, we can further solve the transport equation for the first component of vorticity. Furthermore,  it must be verified that the compatibility conditions are preserved on the intersection of the shock front and the cylinder walls.  Then we derive the deformation-curl system for the velocity field with nonlocal terms involving the trace of the radial velocity on the shock front.
	
	Due to linearization, the vector field
	may not be divergence free and thus the solvability condition of the curl system
	does not hold in general. Therefore, we introduce a new unknown function $\Pi$ that satisfies Dirichlet boundary conditions. It can be shown that $\Pi$ exists and is unique (see Step 5.1), and that $\Pi$ corresponding to the fixed point of the operator vanishes automatically (see Step 7). Subsequently, we solve the div-curl system with homogeneous normal boundary conditions (see Step 5.2). Using  the auxiliary function, we homogenize the curl system and introduce a potential function, thereby transforming the nonlocal boundary condition into a local one. We observe that the second-order elliptic equation for the potential function contains the nonlocal term $\partial_{y_1} \varphi (r_b, y')$. A key point is then to derive an oblique derivative boundary condition that changes this nonlocal term into $\varphi (r_b, y')$, which further simplifies the proof of unique solvability for the system (see Step 5.3).
	\par This paper is organized as follows. In Section \ref{reformulation}, we address the intrinsic hyperbolic-elliptic coupling structure by employing the deformation and vorticity, and reformulate the Rankine-Hugoniot conditions. Then we introduce a coordinate transformation to reduce  the transonic shock problem into the fixed boundary value problem. In Section \ref{iteration}, we design an iteration scheme to prove Theorem \ref{main}.
	
	\section{The reformulation of the transonic shock problem}\label{reformulation}\noindent
	\par In this section, we first  reformulate  the steady ZND combustion equations in terms of  the deformation and vorticity, and then introduce a coordinate transformation to fix the free boundary.
	\subsection{  The reformulation of  the ZND combustion equations }\noindent
	\par It is well-known that the steady  ZND combustion  equations are hyperbolic-elliptic mixed in the subsonic region. It is crucial to find an effective way to decompose the hyperbolic and elliptic modes for further mathematical analysis. Here we deal with the hyperbolic-elliptic coupled structure in terms of the deformation tensor and vorticity  developed in \cite{WX19}.
	\par The fraction of unburned
	gas, the Bernoulli quantity and the entropy    are governed by the following transport equations:
	\begin{align}\label{2-1}
		&\left(\p_r+\frac{U_2}{U_1}\frac{1}{r}\p_{\theta} +\frac{U_3}{U_1}\p_{z}\right)Y=-\frac{{\phi(T)} Y}{U_1},\\\label{2-2}
		&\left(\p_r+\frac{U_2}{U_1}\frac{1}{r}\p_{\theta} +\frac{U_3}{U_1}\p_{z}\right) B =\frac{\phi(T)Y}{U_1} , \\\label{2-3}
		&\left(\p_r+\frac{U_2}{U_1}\frac{1}{r}\p_{\theta} +\frac{U_3}{U_1}\p_{z}\right)S=\frac{\phi(T)Y}{TU_1} .
	\end{align}
	It  follows from the third and fourth equations in \eqref{1-2} that
	\begin{equation}\label{om1}
		\begin{cases}
			U_1 \omega_3 - U_3 \omega_1 + \frac1r\p_{\th} B-\frac{e^S\rho^{\gamma-1}}{\gamma-1} \frac1r\p_{\th}S  = 0, \\
			U_2 \omega_1 - U_1 \omega_2 + \p_{z} B-\frac{e^S\rho^{\gamma-1}}{\gamma-1} \p_{z}S = 0.
		\end{cases}
	\end{equation}
	Thus one  obtains
	\begin{equation}\label{om2}
		\begin{cases}
			\begin{aligned}
				\omega_2 &= \frac {U_2 \omega_1 + \p_{z}B}{U_1}-\frac{e^S\rho^{\gamma-1}}{(\gamma-1)U_1} \p_{z}S , \\
				\omega_3 &= \frac {U_3 \omega_1 - \frac1r\p_{\th} B}{U_1}+\frac{e^S\rho^{\gamma-1}}{(\gamma-1)U_1} \frac1r\p_{\th}S .\end{aligned}
		\end{cases}
	\end{equation}
	Note that
	\begin{equation}\label{om0}
		\div \curl {\bf u}=\p_r\om_1+\frac{1}{r}\p_{\theta}\om_2+\p_{z}\om_3 +\frac{\omega_1}{r} = 0.
	\end{equation}
	Substituting \eqref{om2}  into the equation  \eqref{om0} yields that
	\begin{equation}\label{om3}
		\begin{aligned}
			& \left( \partial_r + \frac{U_2}{r U_1} \partial_\theta + \frac{U_3}{U_1} \partial_z \right) \omega_1
			+ \left( \frac{1}{r} + \frac{1}{r} \partial_\theta \left( \frac{U_2}{U_1} \right) + \partial_z \left( \frac{U_3}{U_1} \right) \right) \omega_1
			+ \frac{1}{r} \partial_\theta \left( \frac{1}{U_1} \right) \partial_z B
			- \partial_z \left( \frac{1}{U_1} \right) \frac{1}{r} \partial_\theta B \\
			& - \partial_\theta \left( \frac{e^S \rho^{\gamma-1}}{(\gamma-1) U_1} \right) \frac{1}{r} \partial_z S
			+ \partial_z \left( \frac{e^S \rho^{\gamma-1}}{(\gamma-1) U_1} \right) \frac{1}{r} \partial_\theta S = 0.
		\end{aligned}
	\end{equation}
	\par Next, using the  Bernoulli quantity, the density can be represented as
	\begin{equation}\label{rho1}
		\rho = \mh (B, S,U_1,U_2,U_3) = \left( \frac {\gamma -1}{\gamma e^S } \right)^{\frac 1 {\gamma - 1}} \left( B - \frac 12 \left(U_1^2+U_2^2+U_3^2 \right) \right)^{\frac 1 {\gamma -1}}.
	\end{equation}
	Substituting \eqref{rho1} into the continuity equation and combining with \eqref{2-1}-\eqref{2-3} lead to
	\begin{equation}\label{rho2}
		\begin{aligned}
			&
			(c^2 (\mh,S ) - U_1^2 ) \p_{r} U_1 + (c^2 (\mh,S ) - U_2^2 ) \frac{\p_\th U_2}{r} + (c^2 (\mh,S ) - U_3^2) \p_{z} U_3 \\
			&\ + \frac{c^2(\mh,S) U_1}{r} =
			U_1 (U_2 \p_{r} U_2 + U_3 \p_{r} U_3  )
			+ \frac{U_2}{r} (U_1 \p_{\th} U_1 + U_3 \p_{\th} U_3 )\\
			&\ + U_3 (U_1 \p_{z} U_1 + U_2 \p_{z} U_2  )+(\gamma-1)\phi(T) Y.
		\end{aligned}
	\end{equation}
	Together with the vorticity equations gives a deformation-curl system
	\begin{equation}\label{dc}
		\begin{cases}
			\begin{aligned}
				& \bigl( c^2(\mh, S) - U_1^2 \bigr) \partial_r U_1 + \bigl( c^2(\mh, S) - U_2^2 \bigr) \frac{\partial_\theta U_2}{r} + \bigl( c^2(\mh, S) - U_3^2 \bigr) \partial_z U_3 \\
				&  + \frac{c^2(\mh, S) U_1}{r} = U_1 \bigl( U_2 \partial_r U_2 + U_3 \partial_r U_3 \bigr) + \frac{U_2}{r} \bigl( U_1 \partial_\theta U_1 + U_3 \partial_\theta U_3 \bigr) \\
				&  + U_3 \bigl( U_1 \partial_z U_1 + U_2 \partial_z U_2 \bigr) + (\gamma-1) \phi(T) Y, \\
				& \frac{1}{r} \partial_\theta U_3 - \partial_z U_2 = \omega_1, \\
				& \partial_z U_1 - \partial_r U_3 = \omega_2, \\
				& \partial_r U_2 - \frac{1}{r} \partial_\theta U_1 + \frac{U_2}{r} = \omega_3.
			\end{aligned}
		\end{cases}
	\end{equation}
	\begin{lemma}\label{equiv1}
		Assume that a  smooth vector function $( U_1,U_2,U_3,P,S,Y)\in \left(C^1(\overline{\m})\right)^6$ defined on the domain $\m$ does not contain the vacuum (i.e. $\rho > 0$ in $\m$) and the radial velocity $U_1>0 $ in $\m$. Then the following two statements are equivalent.
		\begin{enumerate}[(i)]
			\item $(U_1,U_2,U_3,P,S,Y)$ satisfies the steady  {\rm{ZND}} combustion equations \eqref{1-2} in $\m$;
			\item $( U_1,U_2,U_3,B,S,Y)$ satisfies  \eqref{2-1}, \eqref{2-2}, \eqref{2-3} , \eqref{om2} and \eqref{dc} in $\m$.
		\end{enumerate}
	\end{lemma}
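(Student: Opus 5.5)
The plan is to prove the two implications separately. The forward direction (i)$\Rightarrow$(ii) is essentially the derivation already carried out above, so it only needs organizing. The converse (ii)$\Rightarrow$(i) needs an argument recovering the momentum equations from the vorticity relations, and that is where most of the work lies. Throughout, $\rho=\mh(B,S,U_1,U_2,U_3)$ is given by \eqref{rho1}, $P=e^S\rho^\gamma$, and $T=e^S\rho^{\gamma-1}/(\gamma-1)$. The hypotheses $\rho>0$ and $U_1>0$ let us divide by $\rho$ and $U_1$ freely.

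For (i)$\Rightarrow$(ii), I would first get \eqref{2-1} by dividing the last equation of \eqref{1-2} by $\rho U_1$, and \eqref{2-3} the same way from the fifth equation. For \eqref{2-2}, take the inner product of the three momentum equations with $(U_1,U_2,U_3)$. This gives $\rho\,D(\tfrac12|\u|^2)+\u\cdot\n P=0$, where $D=U_1\p_r+\frac{U_2}{r}\p_\th+U_3\p_z$. Combined with the thermodynamic relation $dh=T\,dS+dP/\rho$ for the enthalpy $h=\gamma P/((\gamma-1)\rho)$, this yields $\rho DB=\rho T DS=\rho\phi(T)Y$, which is \eqref{2-2}. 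Next, rewrite the second and third (angular and axial) momentum equations in Lamb form, $(\u\cdot\n)\u=\n(\tfrac12|\u|^2)-\u\times\curl\u$, using $\n P/\rho=\n h-T\n S$ and $h=B-\frac12|\u|^2$. This gives \eqref{om1}, and solving for $\omega_2,\omega_3$ using $U_1\neq0$ gives \eqref{om2}. The last three equations of \eqref{dc} are just the definition of $\curl\u$ in cylindrical coordinates. The first equation of \eqref{dc} comes from expanding the continuity equation. Write $D\rho=\rho\,\p_\rho\mh$-type terms via \eqref{rho1}: $D\rho$ equals $\frac{\rho}{c^2}\bigl(DB-\u\cdot D\u\bigr)$ minus an $S$-contribution $\frac{\rho}{\gamma-1}DS$. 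Substituting \eqref{2-2} and \eqref{2-3} and using $c^2=(\gamma-1)T\gamma$ collapses the source to $(\gamma-1)\phi(T)Y$, which is exactly \eqref{rho2}.

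For (ii)$\Rightarrow$(i), define $\rho$ and $P$ from $(B,S,\u)$ as above. Reversing the computation for \eqref{rho2} with \eqref{2-2} and \eqref{2-3} recovers the continuity equation. Then \eqref{2-1} and \eqref{2-3} multiplied by $\rho U_1$ give the last two equations of \eqref{1-2}. Since $\omega_2,\omega_3$ satisfy \eqref{om2} and equal the curl components by \eqref{dc}, multiplying back by $U_1$ gives \eqref{om1}. Reversing the Lamb-form computation then gives the $\th$- and $z$-momentum equations. Note that \eqref{om3} is not part of (ii), so it is not needed here.

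The main obstacle is recovering the radial momentum equation, which is not directly encoded in (ii). I would handle it with the Bernoulli law. Let $M=(M_1,M_2,M_3)$ be the residuals of the three momentum equations divided by $\rho$. The Lamb-form identity gives $\u\cdot M=DB-T\,DS$, with the $\u\cdot(\u\times\curl\u)$ term vanishing. By \eqref{2-2} and \eqref{2-3}, $DB-T\,DS=\phi Y-\phi Y=0$. We have already shown $M_2=M_3=0$, so $U_1M_1=0$, and $U_1>0$ forces $M_1=0$. The remaining care is bookkeeping: checking that the cylindrical Lamb form matches the centrifugal term $-\rho U_2^2/r$ and the Coriolis term $\rho U_1U_2/r$ exactly, and that the $T$ appearing in the entropy relation is the same $T=\mathfrak e$ used in $\phi(T)$.
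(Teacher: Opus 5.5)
Your proposal is correct. The (i)$\Rightarrow$(ii) part is the same derivation the paper carries out in passing from \eqref{1-2} to \eqref{2-1}--\eqref{2-3}, \eqref{om1}--\eqref{om2} and \eqref{rho2}--\eqref{dc}, and your computations check out: the Bernoulli law via $\rho\,DB=\rho T\,DS$, the Lamb form for the $\theta$- and $z$-momentum equations, and the source $(\gamma-1)\phi(T)Y$ from $c^2=\gamma(\gamma-1)T$. The paper states the lemma without writing out the converse, and your recovery of the radial momentum equation from $\mathbf{u}\cdot M = DB - T\,DS = 0$, $M_2=M_3=0$ and $U_1>0$ is the standard argument the paper implicitly relies on (as in \cite{WX19,WX23}).
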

	Next, to simplify the notations, we neglect the superscript ``+"  in $(U_1^+,U_2^+,U_3^+,B^+,S^+,Y^+)$. For the perturbation of subsonic flows, set
	\begin{gather*}
		V_1(r, \theta, z) = U_1(r, \theta, z) - \bar{U}^+(r), \quad
		V_2(r, \theta, z) = U_2(r, \theta, z), \quad
		V_3(r, \theta, z) = U_3(r, \theta, z), \\
		V_4(r, \theta, z) = S(r, \theta, z) - S_0^+, \quad
		V_5(r, \theta, z) = B(r, \theta, z) - B_0^-, \quad
		V_6(r, \theta, z) = Y^+(r, \theta, z), \\
		\mathbf{V} = (V_1, \cdots, V_6), \quad
		V_7(\theta, z) = \xi(\theta, z) - r_b.
	\end{gather*}
	Then the density, the pressure and the  temperature  can be expressed as
	\begin{equation}\label{2-11}
		\begin{cases}
			\rho
			=\left(\frac{\gamma-1}{\gamma e^{(S_0^++V_4)}}\right)^{\frac{1}{\gamma-1}}
			\left(B_0^-+V_5-\frac12(\bar{U}^++V_1)^2-\frac12V_2^2
			-\frac12 V_3^2\right)^{\frac{1}{\gamma-1}},\\
			P
			=\left(\frac{\gamma-1}{\gamma}\right)^{\frac{\gamma}{\gamma-1}} \left( \frac{1}{e^{(S_0^++V_4)}}\right)^{\frac{1}{\gamma-1}}
			\left(B_0^-+V_5-\frac12(\bar{U}^++V_1)^2-\frac12 V_2^2
			-\frac12 V_3^2\right)^{\frac{\gamma}{\gamma-1}}, \\
			T=\frac1{\gamma}\left( B_0^-+ V_5 -\frac{1}{2} (  \bar{U}^++V_1)^2-\frac{1}{2} V_2^2-\frac{1}{2} V_3^2\right).
		\end{cases}
	\end{equation}
	Furthermore, one can rewrite the equations \eqref{2-1}-\eqref{2-3}, \eqref{om2}, \eqref{om3} and \eqref{rho2} in terms of ${\bf V}$ as follows. The equations for the hyperbolic quantities   $V_6$, $V_5$ and $V_4$ are
	\begin{align}\label{2-1-a}
		&\left( \p_{r} + \frac{V_2}{\bar U^++ V_1}\frac1r\p_{\th} + \frac{V_3}{\bar U^++ V_1} \p_{z}\right)V_6=  -\frac{{\phi(T)} V_6}{\bar U^++ V_1},\\\label{2-2-b}
		&\left( \p_{r} + \frac{V_2}{\bar U^++ V_1}\frac1r\p_{\th} + \frac{V_3}{\bar U^++ V_1} \p_{z}\right)V_5=  \frac{{\phi(T)} V_6}{\bar U^++ V_1}  , \\\label{2-3-c}
		&\left( \p_{r} + \frac{V_2}{\bar U^++ V_1}\frac1r\p_{\th} + \frac{V_3}{\bar U^++ V_1} \p_{z}\right)V_4=\frac{{\phi(T)} V_6}{T(\bar U^++ V_1)}   .
	\end{align}
	The equations for the vorticity are
	\begin{equation}\label{om3-1}
		\begin{aligned}
			&\left( \p_{r} + \frac{V_2}{\bar U^++ V_1}\frac1r\p_{\th} + \frac{V_3}{\bar U^++ V_1} \p_{z} \right) \omega_1
			+ \left(\frac{1}{r}+\frac{1}{r}\p_{\theta}\left(\frac{V_2}{\bar U^++ V_1}\right)+\p_{z}\left(\frac{V_3}{\bar U^++ V_1}\right)
			\right)\omega_1\\
			&+\frac{1}{r}\p_{\theta}\left(\frac{1}{\bar U^++ V_1}\right)\p_{z}V_5
			-\p_{z}\left(\frac{1}{\bar U^++ V_1} \right)\frac{1}{r}\p_{\theta}V_5
			-  \frac1r\p_{\th} \left(\frac{e^{( S_0^++V_4)}\rho^{\gamma-1}}{\bar U^++V_1} \right) \frac{\p_{z} V_4}{\gamma-1}\\
			&+\frac1r\p_{z} \left( \frac{e^{( S_0^++V_4)}\rho^{\gamma-1}}{\bar U^++V_1} \right)\frac{ \p_{\th} V_4}{\gamma-1}= 0,
		\end{aligned}
	\end{equation}
	and
	\begin{equation}\label{om2-1}
		\begin{cases}
			\begin{aligned}
				\omega_2 &= \frac {V_2 \omega_1 + \p_{z}V_5}{\bar U^++V_1}-\frac{e^{( S_0^++V_4)}\rho^{\gamma-1}}{(\gamma-1)(\bar U^++V_1)} \p_{z}V_4 , \\
				\omega_3 &= \frac {V_3 \omega_1 - \frac1r\p_{\th} V_5}{\bar U^++V_1}+\frac{e^{( S_0^++V_4)}\rho^{\gamma-1}}{(\gamma-1)(\bar U^++V_1)} \frac1r\p_{\th}V_4 .
			\end{aligned}
		\end{cases}
	\end{equation}
	It follows from \eqref{rho2} that
	\begin{equation}\label{rho2-1}
		\begin{aligned}
			d_1(r)\p_{r} V_1 + \frac1r\p_{\th} V_2 +   \p_{z} V_3+\frac{ V_1}r+d_2(r)V_1=d_3(r)V_5+d_4(r)V_6
			+F({\bf V}),
		\end{aligned}
	\end{equation}
	where
	\begin{equation*}
		\begin{aligned}
			&d_1(r)=1-(\bar M^+)^2, \quad
			(\bar M^+)^2(r)=\frac{(\bar U^+)^2}{(\bar c^+)^2}, \quad (\bar c^+)^2(r)=\gamma e^{S_0^+}(\bar\rho^+)^{\gamma-1},\\
			&d_2(r)=
			-\frac{(\gamma+1)\bar{U}^+(\bar{U}^+)'}{(\bar c^+)^2}
			-\frac{\gamma-1}{r}(\bar M^+)^2=\frac{(\bar M^+)^2 \left(2+(\gamma-1)(\bar M^+)^2\right)}{r  \left( 1-(\bar M^+)^2 \right)}, \\
			&d_3(r)=-\frac{(\gamma-1)\left((\bar U^+)'+\frac{(\bar U^+)}r\right)}{(\bar c^+)^2},\ \  d_4(r)=\frac{(\gamma-1)\phi(\bar T^+)}{(\bar c^+)^2}, \bar T(r)=\frac1{\gamma}\left( B_0^- -\frac{1}{2}(  \bar{U}^+)^2\right),\\
			&F({\bf V})=-\frac{(\gamma-1)(\p_r V_1+\frac{V_1}{r})}{(\bar c^+)^2} V_5+ \frac{(\bar{U}^+)'+\p_r V_1}{(\bar c^+)^2}\left(\frac{\gamma+1}{2} V_1^2+\frac{\gamma-1}{2}(V_2^2+V_3^2)\right)\\
			&\quad \qquad+\frac{(\gamma-1)(\bar{U}^++V_1)}{2r (\bar c^+)^2}\sum_{j=1}^3 V_j^2+\frac{(\gamma+1)\bar{U}^+V_1 \p_r V_1+ (\gamma-1)\bar{U}^+ \frac{V_1^2}{r}}{(\bar c^+)^2}\\
			&\quad \qquad- \frac{1}{(\bar c^+)^2}\bigg((\gamma-1)V_5-\frac{\gamma-1}{2}\sum_{j=1}^3 V_j^2-(\gamma-1)\bar{U}^+V_1\bigg)\left(\frac{1}{r}\p_{\theta} V_2+\p_{z} V_3\right)\\
			&\quad \qquad
			+\frac{1}{(\bar c^+)^2}\bigg(V_2^2\frac{1}{r}\p_{\theta} V_2 +V_3^2 \p_{z}V_3\bigg)+ \frac{\bar{U}^++V_1}{\bar{c}^2}(V_2\p_r V_2+V_3\p_r V_3)\\
			&\quad \qquad+\frac{V_2}{(\bar c^+)^2}\left((\bar{U}^++V_1)\frac{1}{r}\p_{\theta} V_1 +\frac{V_3}{r}\p_{\theta} V_3\right)+ \frac{V_3}{(\bar{c}^+)^2}\left((\bar{U}^++V_1)\p_{z} V_1 +V_2\p_{z} V_2\right)\\
			&\quad \qquad+\frac{1}{(\bar c^+)^2}\left((\gamma-1)\phi( T)-(\gamma-1)\phi(\bar T^+)\right)V_6.
		\end{aligned}
	\end{equation*}
	\subsection{The linearization  of the Rankine-Hugoniot conditions }\noindent
	\par Due to the mixed elliptic-hyperbolic structure of the steady  ZND combustion  equations in the subsonic region, it is important to find an elaborate decomposition of the Rankine-Hugoniot conditions and yield suitable boundary conditions for the hyperbolic and elliptic modes. To this end,
	we first linearize the shock front. It follows from the third and fourth equations in \eqref{1-18-sub} that
	\begin{equation}\label{rh1}
		\frac{\p_{\th} \xi}{\xi} = \frac {\ma_2(\xi, \th,z)}{\ma (\xi, \th,z)}, \quad
		\p_{z} \xi =\frac {\ma_3(\xi, \th,z)}{\ma (\xi, \th,z)},
	\end{equation}
	with
	\begin{equation*}
		\begin{aligned}
			& \ma (\xi, \th,z) = [\rho U_2^2 + P] [\rho U_3^2 + P ] - ([\rho U_2 U_3])^2, \\
			& \ma_2 (\xi, \th,z) = [\rho U_3^2 + P ] [\rho U_1 U_2 ] - [\rho U_1 U_3] [\rho U_2 U_3], \\
			& \ma_3 (\xi, \th,z) = [\rho U_2^2 + P] [\rho U_1 U_3] - [\rho U_1 U_2] [\rho U_2 U_3].
		\end{aligned}
	\end{equation*}
	Then  \eqref{rh1} can be rewritten as
	\begin{equation}\label{rh2}
		\begin{cases}
			\p_{\th} \xi (\th,z) = r_bb_0 V_2 (\xi ,\th,z) + r_b g_2 \left({\bm \Psi}^- (\xi ,\th,z) - \bar{\bm \Psi}^-(\xi), {\bf V} (\xi ,\th,z), V_7 (\th,z) \right), \\
			\p_{z} \xi (\th,z) = b_0 V_3 (\xi ,\th,z) + g_3 \left({\bm \Psi}^- (\xi ,\th,z) - \bar{\bm \Psi}^-(\xi), {\bf V} (\xi ,\th,z), V_7 (\th,z)\right), \\
		\end{cases}
	\end{equation}
	where
	\begin{equation*}
		\begin{aligned}
			&b_0 = \frac {(\bar{\rho}^+ \bar{U}^+)(r_b)  }{[\bar{P}(r_b) ]} > 0,\\
			&g_2 \left({\bm \Psi}^- (\xi ,\th,z) - \bar{\bm \Psi}^-(\xi), {\bf V} (\xi ,\th,z), V_7 (\th,z)\right) =\frac1{ r_b}\left( \frac {\xi\ma_2(\xi ,\th,z)}{\ma(\xi ,\th,z)} - r_bb_0 V_2 (\xi ,\th,z)\right), \\
			&g_3 \left({\bm \Psi}^- (\xi ,\th,z) - \bar{\bm \Psi}^-(\xi), {\bf V} (\xi ,\th,z), V_7 (\th,z)\right)= \frac {\ma_3(\xi ,\th,z)}{\ma(\xi ,\th,z)} - b_0 V_3 (\xi ,\th,z).
		\end{aligned}
	\end{equation*}
	The functions $g_i\ (i = 2,3)$ can be regarded as error terms which are bounded by
	\begin{equation}\label{rh3}
		|g_i| \le C_0 \left(|{\bm \Psi}^- (\xi ,\th,z) - \bar{\bm \Psi}^-(\xi)| + |{\bf V} (\xi ,\th,z)|^2 + |V_7 (\th,z)|^2 \right), \quad i=2,3.
	\end{equation}
	\par Next, we derive the boundary conditions  on the shock front. It follows from \eqref{1-18-sub} and \eqref{rh1} that
	\begin{equation}\label{rh4}
		\begin{cases}
			\begin{aligned}
				&[\rho U_1] = \frac {[\rho U_2] \ma_2 + [\rho U_3] \ma_3}{\ma},  \\
				&[\rho U_1^2 + P ] = \frac {[\rho U_1 U_2] \ma_2 + [\rho U_1 U_3] \ma_3}{\ma},\\
				& [B]=
				[Y]=0.
			\end{aligned}
		\end{cases}
	\end{equation}
	Denote $ V_0 (r,\th,z) =\rho(r,\th,z)-\bar \rho^+(r) $. Then one can conclude from \eqref{rh4} that at $(\xi,\th,z)$, it holds that
	\begin{equation}\label{rh5}
		\begin{cases}
			\begin{aligned}
				&\bar{\rho}^+(r_b) V_1+{\bar U}^+(r_b)V_0  = R_{11}(\bm{\Psi}^-(\xi,\th,z) - \bar {\bm{\Psi}}^-(\xi),{\bf V}(\xi,\th,z), V_7 (\th,z)),\\
				&2(\bar{\rho}^+ \bar{U}^+)(r_b)V_1+ \left((\bar U^+)^2(r_b)+(\bar c^+)^2(r_b)\right)V_0+e^{S_0^+}(\bar{\rho}^+)^{\gamma}(r_b) V_4\\
				&=-\frac{1}{r_b}[\bar{P}(r_b)] V_7+ R_{12}(\bm{\Psi}^-(\xi,\th,z) - \bar {\bm{\Psi}}^-(\xi),{\bf V}(\xi,\th,z), V_7 (\th,z)),\\
				&\bar{U}_{1}^+(r_b)V_1+ \frac{(\bar c^+)^2(r_b)}{\bar\rho^+(r_b)} V_0+ \frac{\gamma  e^{S_0^+}(\bar\rho^+)^{\gamma-1}(r_b)}{\gamma-1} V_4\\
				&=R_{13}(\bm{\Psi}^-(\xi,\th,z) - \bar {\bm{\Psi}}^-(\xi),{\bf V}(\xi,\th,z), V_7  (\th,z)),\\
			\end{aligned}
		\end{cases}
	\end{equation}
	where
	\begin{equation*}
		\begin{aligned}
			& R_{11}(\bm{\Psi}^-(\xi,\th,z) - \bar {\bm{\Psi}}^-(\xi),{\bf V}(\xi,\th,z), V_7 (\th,z))\\
			&=-[\bar{\rho}\bar{ U}_{1}](\xi)
			-(V_1 +\bar U_{1}^+(\xi)
			- \bar U_{1}^+(r_b))V_0(\xi,\th,z)-(\bar{\rho}^+(\xi)-\bar{\rho}^+(r_b))V_1(\xi,\th,z)\\
			&\quad
			+ (\rho^- U_1^- - \bar{\rho}^- \bar{ U}^- ) (\xi, \theta, z)
			+ \frac{[\rho U_2] \ma_2+[\rho U_3] \ma_3}{\ma},\\
			&R_{12}(\bm{\Psi}^-(\xi,\th,z) - \bar {\bm{\Psi}}^-(\xi),{\bf V}(\xi,\th,z), V_7 (\th,z))\\
			&=-\left([\bar\rho \bar{U}_1^2+ \bar{P}](\xi)-\frac{1}{r_b}[\bar{P}(r_b)] V_7\right)+ (\rho^- (U_1^-)^2 +P^-)(\xi,\th,z)-(\bar{\rho}^-(\bar{U}_{1}^-)^2+\bar{P}^-)(\xi)\\
			&\quad-\left((\rho U_1^2 +P)(\xi,\th,z)-(\bar{\rho}^+(\bar{U}^+)^2+\bar{P}^+)
			(\xi)-2(\bar{\rho}^+ \bar{U}^+)(r_b)V_1\right) \\
			&\qquad + \left((\bar U^+)^2(r_b) +(\bar c^+)^2(r_b)\right)V_0 + e^{S_0^+}(\bar{\rho}^+)^{\gamma}(r_b) V_4 + \frac {[\rho U_1 U_2] \ma_2 + [\rho U_1 U_3] \ma_3}{\ma},\\
			&R_{13}(\bm{\Psi}^-(\xi,\th,z) - \bar {\bm{\Psi}}^-(\xi),{\bf V}(\xi,\th,z), V_7 (\th,z))\\
			&= B^-(\xi,\th,z)- {B}_0^-
			-\frac{1}{2}\sum_{j=1}^3 V_j^2(\xi,\th,z)
			-\bar{U}_{1}^+(\xi)V_1+\bar{U}_{1}^+(r_b)V_1 + \frac{(\bar c^+)^2(r_b)}{\bar\rho^+(r_b)} V_0
			\\
			&\quad
			-\frac{\gamma}{\gamma-1} \left(e^{(S_0^+ +V_4(\xi,\th,z))}(\rho(\xi,\th,z))^{\gamma-1} -e^{S_0^+} (\bar \rho^+(\xi))^{\gamma-1}\right)+ \frac{\gamma  e^{S_0^+}(\bar\rho^+(r_b))^{\gamma-1}}{\gamma-1} V_4.
		\end{aligned}
	\end{equation*}
	Note that
	\begin{equation*}
		( \bar\rho^{\pm}  \bar U^{\pm})'(r)= -\frac{1}{r}(\bar{\rho}^{\pm} \bar U^{\pm})(r),\quad\quad
		\left(\bar{\rho}^{\pm} ( \bar U^{\pm})^2+ \bar{P}^{\pm}\right)'(r)=-\frac{1}{r}\left(\bar\rho^{\pm} (\bar U^{\pm})^2\right)(r).
	\end{equation*}
	Thus there hold
	\begin{equation*}
		[\bar{\rho}\bar U](r_b+V_7)=O(V_7^2),\quad [\bar{\rho} \bar U^2+\bar {P}](r_b+V_7)-\frac{1}{r_b}[\bar {P}(r_b)]V_7=O(V_7^2),
	\end{equation*}
	and
	\begin{equation}\label{rh21}
		|R_{1i}| \le  C_0 \left(|{\bm \Psi}^- (\xi ,\th,z) - \bar{\bm \Psi}^-(\xi)| + |{\bf V} (\xi ,\th,z)|^2 + |V_7 (\th,z)|^2\right), \quad i = 1, 2,3.
	\end{equation}
	By solving the algebraic equations in \eqref{rh5}, one gets
	\begin{equation}\label{rh10}
		\begin{cases}
			V_1(\xi,\th,z)=b_1 V_7(\th,z)+R_1 (\bm{\Psi}^-(\xi,\th,z) - \bar {\bm{\Psi}}^-(\xi),{\bf V}(\xi,\th,z), V_7 (\th,z)),\\
			V_4(\xi,\th,z)=b_2 V_7(\th,z)+R_2(\bm{\Psi}^-(\xi,\th,z) - \bar {\bm{\Psi}}^-(\xi),{\bf V}(\xi,\th,z), V_7 (\th,z)),\\
			V_5(\xi,\th,z)=B^-(\xi,\th,z)-{B}_0^-,\\
			V_6(\xi,\th,z)=Y^-(\xi,\th,z),\\
		\end{cases}
	\end{equation}
	where
	\begin{equation*}
		\begin{aligned}
			&b_{1}=\frac{\gamma  \bar U^+(r_b)[\bar{P}(r_b)]}{ r_b\bar\rho^+(r_b) \left((\bar c^+)^2(r_b)-( \bar U^+)^2(r_b)\right)}
			>0,\quad
			b_{2}=\frac{(\gamma-1)[ \bar P(r_b)]}{r_b e^{S_0^+}(\bar\rho^+)^{\gamma}(r_b)}>0 ,\\
			&R_1=\frac{\left((\bar c^+)^2(r_b)+\gamma( \bar U^+)^2(r_b)\right) R_{11}-\gamma  \bar U^+(r_b)R_{12} + (\gamma-1)(\bar \rho^+  \bar U^+)(r_b)R_{13}}{ \bar\rho^+(r_b) \left((\bar c^+)^2(r_b)-( \bar U^+)^2(r_b)\right)}:=\sum_{i=1}^3b_{1i}R_{1i},\\
			&R_2=\frac{\gamma-1}{e^{S_0^+}( \bar\rho^+)^{\gamma}(r_b)}
			\left(\bar U^+(r_b)R_{11}-R_{12}+ \bar\rho^+(r_b) R_{13}\right):=\sum_{i=1}^3b_{2i}R_{1i}.
		\end{aligned}
	\end{equation*}
	\par In the following,  the superscript ``+" in $ \bar\rho^+,\bar U^+,\bar P^+, S_0^+$ will be ignored to simplify the notations. It follows from  the definition of the Bernoulli quantity that
	\begin{equation}\label{2-17}
		\begin{aligned}
			&V_1(r_1,\th,z)+\frac{ e^{S_0}\bar\rho^{\gamma-1}(r_1)V_4(r_1,\th,z)}{(\gamma-1) \bar U(r_1)}
			=\frac{V_5(r_1,\th,z)}{ \bar U(r_1)} - \frac{\sigma P_{ex}(\th,z)}{( \bar\rho \bar U)(r_1)}\\
			& \quad
			-\frac{E({\bf V}(r,\th,z))}{\bar U(r_1)}-\frac{1}{2\bar U(r_1)}\left( V_1^2+V_2^2+V_3^2 \right)(r_1,\th,z), \\
		\end{aligned}
	\end{equation}
	where
	\begin{equation*}
		\begin{aligned}
			E({\bf V}(r,\th,z))=\frac{\gamma}{\gamma-1}e^{\frac{S_0+V_4}{\gamma}}
			P^{\frac{\gamma-1}{\gamma}}-\frac{\gamma}{\gamma-1}
			e^{\frac{S_0}{\gamma}} \bar P^{\frac{\gamma-1}{\gamma}}
			-\frac{1}{ \bar \rho(r)}(P-  \bar P)-\frac{ e^{S_0}\bar \rho^{\gamma-1}(r)}{\gamma-1} V_4.
		\end{aligned}
	\end{equation*}
	Note that $E$ is an error term that can be bounded by
	\begin{equation}\label{rh12}
		| E({\bf V})|\leq C_*|{\bf V}|^2.
	\end{equation}
	The boundary conditions for $V_2$ and $V_3$ on the nozzle walls are
	\begin{equation}\label{slip15}\begin{cases}
			V_2(r,\pm \th_0,z) =0, &    (r, z) \in [r_b+V_7, r_1] \times [-L, L],\\
			V_3(r,\th,\pm L) =0, &  (r, \th) \in [r_b+V_7, r_1] \times [-\th_0, \th_0].
	\end{cases}\end{equation}
	\par Then to solve  the problem \eqref{1-2} with \eqref{1-7-SUP}, \eqref{1-9-SUP}, \eqref{1-12-sub} and \eqref{1-18-sub} is equivalent to find a function $V_7$ defined on $D$ and a vector function $(V_1,\cdots, V_6)$ defined on the $\Omega^+$, which solve the equations \eqref{2-1-a}-\eqref{rho2-1} with boundary conditions \eqref{rh2}, \eqref{rh10}, \eqref{2-17} and \eqref{slip15}.
	\subsection{The coordinates transformation}\noindent
	\par In order to deal with the free boundary value problem, it is convenient to reduce it to a fixed boundary value problem by setting
	\begin{equation}\label{ct1}
		\begin{cases}
			y_1 = \frac {r - \xi }{r_1 - \xi } (r_1 - r_b) + r_b = \frac {r - W_7 - r_b}{r_1 - W_7 - r_b} (r_1 - r_b) + r_b, \\
			y_2 = \th, \quad y_3 = z,
		\end{cases}
	\end{equation}
	where
	\begin{equation*}
		W_7 (\th,z) = \xi (\th,z) - r_b.
	\end{equation*}
	Then
	\begin{equation*}
		\begin{cases}
			\begin{aligned}
				& r = y_1 + \frac {r_1- y_1}{r_1 - r_b}W_7 \eqqcolon   J_0^{W_7}, \\
				&\p_{r} = \frac {r_1 - r_b}{r_1 - W_7- r_b} \p_{y_1}  \eqqcolon    J_1^{W_7}, \\
				&\frac1r\p_{\th} = \frac1{J_0^{W_7}}\left(\p_{y_2} + \frac {(y_1 - r_1) \p_{y_2} W_7}{r_1 - W_7 - r_b} \p_{y_1}\right) \eqqcolon J_2^{W_7}, \\
				&\p_{z }= \p_{y_3} + \frac {(y_1-r_1) \p_{z} W_7}{r_1 - W_7 - r_b} \p_{y_1} \eqqcolon J_3^{W_7},
			\end{aligned}
		\end{cases}
	\end{equation*}
	and the domain $\Omega^+$ becomes
	\begin{equation*}
		\mn = \{ (y_1, y') : y_1 \in (r_b, r_1),\ y' = (y_2, y_3) \in D \}.
	\end{equation*}
	The nozzle walls of the domain $\mn $ are denoted by
	\begin{equation*}
		\begin{aligned}
			&\Sigma_{\th_0}^\pm=\{(y_1, y'):r_b\leq y_1 \leq r_1,\  y_2=\pm \th_0,\ -L \leq y_3 \leq L\},\\
			&\Sigma_{L}^\pm=\{(y_1, y'):r_b\leq y_1 \leq r_1,\  -\th_0 \leq y_2\leq\th_0,\ y_3=\pm L\}.\\
		\end{aligned}
	\end{equation*}
	\par Set
	\begin{equation*}
		\begin{cases}
			W_j (y_1,y') = V_j \left(y_1 + \frac {r_1 - y_1}{r_1 - r_b} W_7, y_2, y_3 \right), \ \ j = 1, \cdots , 6, \quad  {\bf W} = (W_1, \cdots, W_6),\\
			\tilde{\omega}_j (y_1,y') = \omega_j \left(y_1 + \frac {r_1 - y_1}{r_1 - r_b} W_7, y_2, y_3 \right),\ \   j = 1, 2, 3.
		\end{cases}
	\end{equation*}
	Then the density, the pressure and the  temperature  in \eqref{2-11} can be expressed as
	\begin{equation}\label{2-9-1}
		\begin{cases}
			\e\rho(y_1, y^\prime)
			={\left(\frac{\gamma-1}{ \gamma e^{( S_0+W_4)}}\right)}^{\frac{1}{\gamma-1}} \left( B_0^-+W_5 -\frac{1}{2} \left( (W_1+ \bar U(J_0^{W_7}))^2+W_2^2+W_3^2 \right)\right)^{\frac{1}{\gamma-1}},\\
			\e P(y_1, y^\prime)
			={\left(\frac{\gamma-1}{ \gamma  e^{( S_0+W_4)}}\right)}^{\frac{\gamma}{\gamma-1}} \left(B_0^-+ W_5 -\frac{1}{2} \left( (W_1+ \bar U(J_0^{W_7}))^2+W_2^2+W_3^2 \right)\right)^{\frac{\gamma}{\gamma-1}},\\
			\e T(y_1, y^\prime)=\frac1{\gamma}\left( B_0^-+ W_5 -\frac{1}{2} \left( (W_1+ \bar U(J_0^{W_7}))^2+W_2^2+W_3^2 \right)\right).\\
		\end{cases}
	\end{equation}
	\par In the $(y_1,y')$ coordinates, the transonic shock problem can be reformulated as follows. The shock front will be determined by the first equation in \eqref{rh10} as follows
	\begin{equation}\label{rh11-2}
		W_7(y')=\frac1{b_1}W_1(r_b, y')  -\frac1{b_1} R_1({\bf W}(r_b,y'),W_7(y')),
	\end{equation}
	where $$R_1 ({\bf W}(r_b,y'),W_7(y'))=\sum_{i=1}^3b_{1i}R_{1i} ({\bf W}(r_b,y'),W_7(y')).$$
	It follows from \eqref{2-1-a}--\eqref{2-3-c} and \eqref{rh10} that $W_6$, $W_5$ and $W_4$ satisfy
	\begin{equation}\label{2-1-a-a}
		\begin{cases}
			\biggl( J_1^{W_7} + \frac{W_2}{\bar{U} + W_1} J_2^{W_7} + \frac{W_3}{\bar{U} + W_1} J_3^{W_7} \biggr) W_6 = -\frac{\phi(\e T) W_6}{\bar{U} + W_1}, \\
			W_6(r_b, y') = Y^-(r_b + W_7(y'), y'),
		\end{cases}
	\end{equation}
	\begin{equation}\label{2-1-a-b}
		\begin{cases}
			\biggl( J_1^{W_7} + \frac{W_2}{\bar{U} + W_1} J_2^{W_7} + \frac{W_3}{\bar{U} + W_1} J_3^{W_7} \biggr) W_5 = \frac{\phi(\e T) W_6}{\bar{U} + W_1}, \\
			W_5(r_b, y') = B^-(r_b + W_7(y'), y') - B_0^-,
		\end{cases}
	\end{equation}
	and
	\begin{equation}\label{2-1-a-c}
		\begin{cases}
			\biggl( J_1^{W_7} + \frac{W_2}{\bar{U} + W_1} J_2^{W_7} + \frac{W_3}{\bar{U} + W_1} J_3^{W_7} \biggr) W_4 = \frac{\phi(\e T) W_6}{\e T (\bar{U} + W_1)}, \\
			W_4(r_b, y') = b_2 W_7(y') + R_2(\mathbf{W}(r_b, y'), W_7(y')),
		\end{cases}
	\end{equation}
	where
	$$
	R_2\bigl( \mathbf{W}(r_b, y'), W_7(y') \bigr) = \sum_{i=1}^3 b_{2i} R_{1i}\bigl( \mathbf{W}(r_b, y'), W_7(y') \bigr).
	$$
	Furthermore, after the coordinates transformation, the equation \eqref{rh2} is transformed to be
	\begin{equation}\label{rh11}
		\begin{cases}
			\p_{y_2} W_7(y') = r_bb_0 W_2 (r_b, y') + r_bg_2 ({\bf W} (r_b, y'), W_7 (y')),\\
			\p_{y_3} W_7 (y') = b_0 W_3 (r_b, y') + g_3 ({\bf W} (r_b, y'), W_7 (y')),\\
		\end{cases}
	\end{equation}
	where
	\begin{equation}\label{rh11-1}
		\begin{cases}
			g_2 ({\bf W} (r_b, y'), W_7 (y'))=  \frac1{ r_b}\left(\frac {(r_b+W_7)\ma_2  ({\bf W} (r_b, y'), W_7 (y'))}{\ma ({\bf W} (r_b, y'), W_7 (y'))} - r_b b_0 W_2 (r_b, y') \right),\\
			g_3 ({\bf W} (r_b, y'), W_7 (y'))=  \frac {\ma_3  ({\bf W} (r_b, y'), W_7 (y'))}{\ma ({\bf W} (r_b, y'), W_7 (y'))} - b_0 W_3 (r_b, y').
		\end{cases}
	\end{equation}
	Note that the first two equations in \eqref{rh11} are equivalent to
	\begin{equation}\label{rh16}
		\begin{cases}
			F_2 (y') =  \frac1{ r_b}\p_{y_2} W_7(y') - b_0 W_2 (r_b, y') - g_2 ({\bf W} (r_b, y'),W_7(y')),\\
			F_3 (y') = \p_{y_3} W_7 (y') - b_0 W_3 (r_b, y') - g_3({\bf W} (r_b, y'), W_7 (y')).
		\end{cases}
	\end{equation}
	The following lemma  is essential for us to solve the transonic shock problem.
	\begin{lemma}\label{lm1}
		Let $(F_2,F_3)\in \left(C^1(\overline{D})\right)^2$. Then the following
		two statements are equivalent.
		\begin{enumerate}[\rm(i)]
			\item $F_2 = F_3 \equiv 0 $ on $\overline{D}$;
			\item $F_2$ and $F_3$ solve the following problem
			\begin{equation}\label{euler8}
				\begin{cases}
					\p_{y_2} F_3 - r_b\p_{y_3} F_2 = 0,  &{\rm{in }}\  D, \\
					\p_{y_2} F_2 + r_b\p_{y_3} F_3 = 0,  &{\rm{in }} \ D, \\
					F_2 (\pm \th_0 , y_3)  = 0,    &y_3 \in [-L,L], \\
					F_3 (y_2, \pm L ) = 0,  &y_2 \in [-\th_0,\th_0].
				\end{cases}
			\end{equation}
		\end{enumerate}
	\end{lemma}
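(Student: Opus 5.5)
The implication (i)$\Rightarrow$(ii) is immediate: if $F_2=F_3\equiv 0$ on $\overline D$, every equation and every boundary condition in \eqref{euler8} holds trivially. The substance is (ii)$\Rightarrow$(i). This is a homogeneous first order Cauchy--Riemann type system with homogeneous normal-type boundary conditions on the rectangle $D$, and I will show it has only the trivial solution by an energy (integration by parts) argument. One could instead construct a potential, but the energy identity avoids any regularity issue beyond $C^1$.

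First rescale to remove $r_b$. Set $G_2=F_2$, $G_3=F_3/r_b$, or change variables $y_3=r_b s$ in spirit. The system then reads
\[
\p_{y_2}F_3-r_b\p_{y_3}F_2=0,\qquad \p_{y_2}F_2+r_b\p_{y_3}F_3=0,
\]
and this is a curl-free and divergence-free condition for the vector field $(F_2,F_3)$ with respect to the weighted derivatives $(\p_{y_2}, r_b\p_{y_3})$.

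The key step is the identity
\[
\int_D |\p_{y_2}F_2+r_b\p_{y_3}F_3|^2+|\p_{y_2}F_3-r_b\p_{y_3}F_2|^2\,dy' = \int_D |\p_{y_2}F_2|^2+|\p_{y_2}F_3|^2+r_b^2|\p_{y_3}F_2|^2+r_b^2|\p_{y_3}F_3|^2\,dy' + 2r_b\int_D(\p_{y_2}F_2\,\p_{y_3}F_3-\p_{y_2}F_3\,\p_{y_3}F_2)\,dy'.
\]
The last integrand is a Jacobian determinant, i.e. the null form $\p_{y_2}(F_2\p_{y_3}F_3)-\p_{y_3}(F_2\p_{y_2}F_3)$. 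Its integral therefore reduces to the boundary integral
\[
\int_{-L}^{L}\bigl[F_2\p_{y_3}F_3\bigr]_{y_2=-\th_0}^{\th_0}dy_3-\int_{-\th_0}^{\th_0}\bigl[F_2\p_{y_2}F_3\bigr]_{y_3=-L}^{L}dy_2 .
\]
The first term vanishes because $F_2=0$ on $y_2=\pm\th_0$. For the second, $F_3=0$ on $y_3=\pm L$ gives $\p_{y_2}F_3=0$ there, since it is a tangential derivative. Hence the left side being zero forces $\n F_2=\n F_3=0$, so $F_2$ and $F_3$ are constant. The boundary conditions then give $F_2=F_3=0$.

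The main obstacle is regularity. With only $F_i\in C^1(\overline D)$, the divergence form of the Jacobian needs second derivatives. I would handle this by approximation: the Jacobian $\det\n(F_2,F_3)$ integrated over $D$ depends continuously on $(F_2,F_3)$ in $C^1$. So I would mollify to smooth $F^\epsilon$, write the boundary integral in the form $\oint F_2\,dF_3$ along $\p D$ (Stokes), and pass to the limit. In that form the boundary term only involves tangential derivatives, which are continuous up to the boundary. The corner points are harmless, since each side carries one vanishing factor. Alternatively, one could avoid all this by noting that the system implies $F_2-iF_3$ is holomorphic in $(y_2, y_3/r_b)$, so $F_2$ is harmonic. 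Then $F_2$ satisfies a Dirichlet condition on the $\th$-sides and the Neumann condition $\p_{y_3}F_2=\p_{y_2}F_3/r_b=0$ on $y_3=\pm L$. By the maximum principle or Hopf's lemma, $F_2\equiv0$, and then $F_3$ is constant and hence zero.
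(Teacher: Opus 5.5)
Your proof is correct. The paper states this lemma without proof, so the natural comparison is with the standard argument for it, which goes through a potential. Since $D$ is simply connected, the first equation in \eqref{euler8} gives $\Phi\in C^2(\overline D)$ with $F_2=\p_{y_2}\Phi$ and $F_3=r_b\p_{y_3}\Phi$; explicitly, $\Phi=\int_{-\th_0}^{y_2}F_2(t,-L)\,dt+\frac{1}{r_b}\int_{-L}^{y_3}F_3(y_2,s)\,ds$. The second equation then becomes $\p_{y_2}^2\Phi+r_b^2\p_{y_3}^2\Phi=0$. The two boundary conditions become homogeneous Neumann data on all four sides of $D$. Multiplying by $\Phi$ and integrating by parts gives $\n\Phi\equiv 0$, i.e.\ $F_2=F_3\equiv 0$.

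Your route works at the level of $F$ rather than a potential. You get an $H^1$-type identity whose cross term is the Jacobian null form. Its boundary contribution $\oint_{\p D}F_2\,dF_3$ is killed by the same two boundary conditions: one kills the factor $F_2$, the other the tangential derivative of $F_3$. The advantage is that you obtain a coercive identity, $\|\p_{y_2}F\|_{L^2}^2+r_b^2\|\p_{y_3}F\|_{L^2}^2=\|\p_{y_2}F_2+r_b\p_{y_3}F_3\|_{L^2}^2+\|\p_{y_2}F_3-r_b\p_{y_3}F_2\|_{L^2}^2$, valid for every $C^1$ field satisfying the boundary conditions. That is more than uniqueness.

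Contrary to your remark, however, it is your route, not the potential one, that creates a regularity issue. The null-form identity needs second derivatives, hence your mollification step. The potential is automatically $C^2(\overline D)$, so Green's identity applies with no approximation at all. Your approximation is nonetheless fine: extend $F$ to a $C^1$ field near $\overline D$ (or use interior rectangles) and pass to the limit in $\int_D\det\n F\,dy'=\oint_{\p D}F_2\,dF_3$. Both sides are continuous in the $C^1(\overline D)$ topology, and the boundary conditions are applied only to the limit $F$.

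Your holomorphic alternative is also correct. $F_2-iF_3$ is holomorphic in $y_2+iy_3/r_b$. $F_2$ has Dirichlet data on $y_2=\pm\th_0$ and Neumann data on $y_3=\pm L$, the latter using the first equation extended to the boundary by continuity. The corners lie on the Dirichlet sides, so the maximum principle together with Hopf's lemma on the open Neumann sides gives $F_2\equiv 0$, and then $F_3\equiv 0$.

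One harmless slip: setting $G_3=F_3/r_b$ does not remove $r_b$ from both equations. The correct normalization is $s=y_3/r_b$, so that $r_b\p_{y_3}=\p_s$. You never use this step, so it does not affect the argument.
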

	\par The first equation in \eqref{euler8} implies
	\begin{equation}\label{rh18}
		\frac1{r_b}\p_{y_2} W_3 (r_b, y') - \p_{y_3} W_2 (r_b, y') = \frac 1{b_0} \left( \p_{y_3} g_2 ({\bf W} (r_b, y'), W_7 (y')) - \frac1{r_b}\p_{y_2} g_3 ({\bf W} (r_b, y'), W_7 (y')) \right),
	\end{equation}
	which gives the boundary data on the shock front for the first component of the
	vorticity.
	The second equation in \eqref{euler8} yields
	\begin{equation}\label{rh19}
		\begin{aligned}
			&\frac1{r_b^2}\p_{y_2}^2 W_7 (y') + \p_{y_3}^2 W_7 (y') - \frac{b_0}{r_b} \p_{y_2} W_2 (r_b, y') - b_0 \p_{y_3} W_3 (r_b, y') \\
			& \quad = \frac{1}{r_b}\p_{y_2}g_2 ({\bf W} (r_b, y'), W_7 (y'))+ \p_{y_3}g_3 ({\bf W} (r_b, y'), W_7 (y')).
		\end{aligned}
	\end{equation}
	This, together with \eqref{rh11-2}, gives that
	\be\label{shock19}
	\frac{1}{r_b^2}\p_{y_2}^2 W_1 (r_b, y')+\p_{y_3}^2 W_1 (r_b, y') -b_0b_1\left(\frac{1}{r_b}\p_{y_2} W_2+\p_{y_3} W_3\right)(r_b,y')=q_1({\bf W}(r_b,y'),W_7 (y')),
	\ee
	with
	\be\no
	q_1=b_1\left(\frac{1}{r_b}\p_{y_2}g_2+\p_{y_3}g_3\right)({\bf W}(r_b,y'), W_7 (y'))+\left(\frac{1}{r_b^2}\p_{y_2}^2 R_1+\p_{y_3}^2 R_1\right)({\bf W}(r_b,y'), W_7 (y')).
	\ee
	The condition \eqref{shock19} can be used as the boundary condition for the deformation-curl system associated with the velocity field on the shock front. Furthermore, one can express the boundary conditions in \eqref{euler8} as
	\begin{align}\label{bc4}
		& \left( \frac{1}{r_b} \partial_{y_2} W_1 - b_0 b_1 W_2 \right)(r_b, \pm \theta_0, y_3) = q_2^\pm\bigl( \mathbf{W}(r_b, \pm \theta_0, y_3), W_7(\pm \theta_0, y_3) \bigr), \quad y_3 \in [-L, L], \\ \label{bc17}
		& \left( \partial_{y_3} W_1 - b_0 b_1 W_3 \right)(r_b, y_2, \pm L) = q_3^\pm\bigl( \mathbf{W}(r_b, y_2, \pm L), W_7(y_2, \pm L) \bigr), \quad y_2 \in [-\theta_0, \theta_0],
	\end{align}
	with
	\begin{equation*}
		\begin{aligned}
			&q_2^{\pm} ({\bf W} (r_b, \pm \th_0, y_3 ), W_7(\pm \th_0 , y_3))
			=\left( \frac1{r_b}\p_{y_2} R_1 ({\bf W} (r_b, \cdot ),  W_7 (\cdot))  + b_1 g_2 ({\bf W} (r_b, \cdot ),  W_7 (\cdot)) \right)(\pm \th_0 , y_3), \\
			&q_3^{\pm}({\bf W} (r_b,  y_2, \pm L ), W_7 (y_2, \pm L))
			= \left(\p_{y_3} R_1 ({\bf W} (r_b, \cdot ), W_7 (\cdot)) + b_1 g_3 ({\bf W} (r_b, \cdot ), W_7 (\cdot)) \right)(y_2, \pm L).
		\end{aligned}
	\end{equation*}
	The roles of \eqref{bc4} and \eqref{bc17} will be indicated later.
	\par The vorticity can be determined as follows. \eqref{om3-1} is changed to be
	\begin{equation}\label{om7}
		\begin{aligned}
			& \bigg( J_1^{W_7} + \f {W_2}{\bar{U}  + W_1} J_2^{W_7} + \f {W_3}{\bar{U} + W_1} J_3^{W_7} \bigg) \e{\om}_1
			+ \bigg(\f1{J_0^{W_7}}+ J^{W_7}_2 \bigg( \f {W_2}{\bar{U}(J_0^{W_7})  + W_1}\bigg) \\
			& \quad + J^{W_7}_3 \bigg( \f {W_3}{\bar{U}(J_0^{W_7}) + W_1} \bigg) \bigg) \e\om_1
			= H_1 ({\bf W}, W_7),
		\end{aligned}
	\end{equation}
	with
	\begin{equation*}
		\begin{aligned}
			H_1 ({\bf W}, W_7)& = J^{W_7}_3 \left( \f 1{\bar{U}  + W_1} \right) J^{W_7}_2 W_5
			-  J^{W_7}_2 \left(\f 1{\bar{U}  + W_1} \right) J^{W_7}_3 W_5\\
			&\quad +  J^{W_7}_2 \left(\frac{e^{( S_0^++W_4)}\e\rho^{\gamma-1}}{(\gamma-1)(\bar U^++ W_1)}\right) J^{W_7}_3 W_4
			- J^{W_7}_3 \left(\frac{e^{( S_0^++W_4)}\e\rho^{\gamma-1}}{(\gamma-1)(\bar U^++ W_1)}\right)J^{W_7}_2W_4  .
		\end{aligned}
	\end{equation*}
	The boundary data for $\tilde{\om}_1$ is given by \eqref{rh18} at $y_1 = r_b$,
	\begin{equation}\label{bc6}
		\tilde{\om}_1 (r_b, y') =\frac 1{b_0} \left( \p_{y_3} g_2 ({\bf W} (r_b, y'), W_7 (y')) - \frac1{r_b}\p_{y_2} g_3 ({\bf W} (r_b, y'), W_7 (y')) \right) + g_4 ({\bf W} (r_b, y'), W_7 (y')) ,
	\end{equation}
	where
	\begin{equation}\label{bc6-bo}
		\begin{aligned}
			g_4 ({\bf W}({\bf y}) , W_7 (y'))
			= &
			\frac{(y_1-r_1) W_7\p_{y_2}W_3(r_b,y')}{y_1((r_1-r_b)y_1+(r_1-y_1)W_7)}-\frac{(y_1-r_1)
				\p_{y_3}W_7\p_{y_1}W_2(r_b,y')}{r_1-r_b-W_7}\\
			& +\frac{r_1-r_b}{(r_1-r_b)y_1+(r_1-y_1)W_7}\frac{(y_1-r_1)\p_{y_2}W_7 \p_{y_1}W_3(r_b,y')}{r_1-r_b-W_7}.
		\end{aligned}
	\end{equation}
	Next, it follows from \eqref{om2-1} that
	\begin{equation*}
		\begin{cases}
			\e{\om}_2 = J_3^{W_7 } W_1 - J_1^{W_7} W_3  =
			\frac {W_2 \e\omega_1 + J_3^{W_7}W_5}{\bar U(J_0^{W_7})+W_1}-\frac{e^{( S_0+W_4)}\e\rho^{\gamma-1}}{(\gamma-1)(\bar U(J_0^{W_7})+W_1)} J_3^{W_7}W_4
			, \\
			\e{\om}_3 = J_1^{W_7} W_2 - J_2^{W_7} W_1+\frac{W_2}{J_0^{W_7}}  =
			\frac {W_3 \e\omega_1 - J_2^{W_7}W_5}{\bar U(J_0^{W_7})+W_1}+\frac{e^{( S_0+W_4)}\e\rho^{\gamma-1}}{(\gamma-1)(\bar U(J_0^{W_7})+W_1)} J_2^{W_7}W_4.
		\end{cases}
	\end{equation*}
	Then one gets
	\be\label{euler9}
	&& \frac1{y_1}\p_{y_2} W_3 - \p_{y_3} W_2 = \e{\om}_1 + H_2 ({\bf W}, W_7), \\\label{euler10}
	&& \p_{y_3} W_1 - \p_{y_1} W_3 +\f  {e^{S_0}\bar \rho^{\gamma-1}(y_1)\p_{y_3} W_4}{(\gamma-1)\bar{U} (y_1)}
	=
	\f {W_2 \e{\om}_1 +J_3^{W_7}W_5}{\bar{U}(J_0^{W_7}) + W_1} + H_3 ({\bf W}, W_7) , \\\label{euler11}
	&& \p_{y_1} W_2+\frac{W_2}{y_1} - \frac{1}{y_1}\p_{y_2} W_1 - \f  {e^{S_0}\bar \rho^{\gamma-1}(y_1)\p_{y_2} W_4}{(\gamma-1)y_1\bar{U} (y_1) }
	= \f {W_3 \e{\om}_1 -J_2^{W_7}W_5}{\bar{U}(J_0^{W_7})  + W_1} + H_4 ({\bf W}, W_7),
	\ee
	where
	\begin{equation*}
		\begin{aligned}
			H_2({\bf W}, W_7) & =
			\frac{(y_1-r_1)\p_{y_3}W_7 \p_{y_1}W_2}{(r_1-r_b-W_7)}-\frac{(y_1-r_1)W_7\p_{y_2} W_3}{y_1((r_1-r_b)y_1+(r_1-y_1)W_7)}\\
			&\quad-\frac{r_1-r_b}{(r_1-r_b)y_1+(r_1-y_1)W_7}\frac{(y_1-r_1)\p_{y_2}W_7 \p_{y_1}W_3}{r_1-r_b-W_7},\\
			H_3 ({\bf W}, W_7) & =
			-\frac{(y_1-r_1)\p_{y_3} W_7 \p_{y_1} W_1}{r_1-r_b-W_7} + \frac{W_7\p_{y_1}W_3}{r_1-r_b-W_7}
			-\f {e^{( S_0+W_4)}\e\rho^{\gamma-1}(J_0^{W_7}) J^{W_7}_3 W_4}{(\gamma-1) \left(\bar{U} (J_0^{W_7})+ W_1 \right)}
			+\f  {e^{ S_0}\bar \rho^{\gamma-1}(y_1)\p_{y_3} W_4}{(\gamma-1)\bar{U}(y_1) }, \\
			H_4 ({\bf W}, W_7) & =
			-\frac{W_7 \p_{y_1} W_2}{r_1-r_b-W_7}+ \frac{(r_1-y_1)W_7(W_2+\p_{y_2} W_1)}{y_1((r_1-r_b)y_1+(r_2-y_1)W_7)}+
			\frac{1}{J_0^{W_7}}\frac{(y_1-r_1)\p_{y_2}W_7\p_{y_1}W_1}{r_1-r_b-W_7}\\
			&\quad +\f {e^{( S_0+W_4)}\e\rho^{\gamma-1}(J_0^{W_7}) J^{W_7}_2 W_4}{(\gamma-1)\left(\bar{U} (J_0^{W_7})+ W_1\right)}
			-\f  {e^{ S_0}\bar \rho^{\gamma-1}(y_1)\p_{y_2} W_4}{(\gamma-1)y_1 \bar{U}(y_1) }.
		\end{aligned}
	\end{equation*}
	Furthermore, the equation \eqref{rho2-1} can be rewritten as
	\begin{equation}\label{rho2-1-co}
		\begin{aligned}
			d_1(y_1)\p_{y_1} W_1 + \frac1{y_1}\p_{y_2} W_2 +   \p_{y_3} W_3+\frac{W_1}{y_1}+d_2(y_1)W_1=d_3(y_1)W_5+d_4(y_1)W_6
			+H_0({\bf W},W_7),
		\end{aligned}
	\end{equation}
	where
	\begin{align*}
		&H_0({\bf W},W_7)=-\frac{(\gamma-1)\left(J^{W_7}_1 W_1+\frac{W_1}{J^{W_7}_0}\right)}{\bar c^2(J^{W_7}_0)} W_5+ \frac{\bar{U}'(J^{W_7}_0)+J^{W_7}_1 W_1}{\bar c^2}\left(\frac{\gamma+1}{2} W_1^2+\frac{\gamma-1}{2}(W_2^2+W_3^2)\right)\\
		&\quad+\frac{(\gamma-1)(\bar{U}(J^{W_7}_0)+W_1)}{2J^{W_7}_0 \bar c^2(J^{W_7}_0)}\sum_{j=1}^3 W_j^2+\frac{(\gamma+1)\bar{U}(J^{W_7}_0)W_1 J^{W_7}_1 W_1+ (\gamma-1)\bar{U}(J^{W_7}_0) \frac{W_1^2}{J^{W_7}_0}}{\bar c^2(J^{W_7}_0)}\\
		&\quad- \frac{1}{\bar c^2(J^{W_7}_0)}\bigg((\gamma-1)W_5-\frac{\gamma-1}{2}\sum_{j=1}^3 W_j^2-(\gamma-1)\bar{U}(J^{W_7}_0)W_1\bigg)\bigg(J^{W_7}_2 W_2+J^{W_7}_3 W_3\bigg)\\
		&\quad
		+\frac{1}{\bar c^2(J^{W_7}_0)}\left(W_2^2J^{W_7}_2 W_2 +W_3^2 J^{W_7}_3W_3\right)+ \frac{\bar{U}(J^{W_7}_0)+W_1}{\bar{c}^2(J^{W_7}_0)}(W_2J^{W_7}_1 W_2+W_3J^{W_7}_1W_3)\\
		&\quad+\frac{W_2}{(\bar c^2(J^{W_7}_0)}\left((\bar{U}+W_1)J^{W_7}_2 W_1 +{W_3}J^{W_7}_2W_3\right)+ \frac{W_3}{\bar{c}^2(J^{W_7}_0)}\left((\bar{U}(J^{W_7}_0)+W_1)J^{W_7}_3 W_1 +W_2J^{W_7}_3 W_2\right)\\
		&\quad+\frac{1}{\bar c^2(J^{W_7}_0)}\left((\gamma-1)\phi( T)-(\gamma-1)\phi(\bar T^+)(J^{W_7}_0)\right)W_6 +d_1(y_1)\p_{y_1} W_1+ \frac{W_1}{y_1} \\
		&\quad -\bigg(d_1(J^{W_7}_0)J^{W_7}_1 W_1+\frac{W_1}{J^{W_7}_0} + J^{W_7}_2 W_2 +   J^{W_7}_3W_3+d_2(J^{W_7}_0)W_1\bigg)+ \frac1{y_1}\p_{y_2} W_2 +   \p_{y_3} W_3+d_2(y_1)W_1\\
		&\quad+\left(d_3(J^{W_7}_0)-d_3(y_1)\right)W_5
		+\left(d_4(J^{W_7}_0)-d_4(y_1)\right)W_6.\\
	\end{align*}
	The  boundary condition at the exit becomes
	\begin{equation}\label{2-17-ex}
		\begin{aligned}
			&W_1(r_1,y')+\frac{ e^{S_0}\bar\rho^{\gamma-1}(r_1)W_4(r_1,y')}{(\gamma-1) \bar U(r_1)}
			=\frac{W_5(r_1,y')}{ \bar U(r_1)} - \frac{\sigma P_{ex}(y')}{( \bar\rho \bar U)(r_1)}\\
			& \quad
			-\frac{E({\bf W}(r_1,y'))}{\bar U(r_1)}-\frac{1}{2\bar U(r_1)}\bigg ( W_1^2+W_2^2+W_3^2\bigg )(r_1,y'), \\
		\end{aligned}
	\end{equation}
	where
	\begin{equation*}
		\begin{aligned}
			E({\bf W}(r_1,y'))=\frac{\gamma e^{\frac{S_0+W_4(r_1,y')}{\gamma}}
				P^{\frac{\gamma-1}{\gamma}}}{\gamma-1}-\frac{\gamma e^{\frac{S_0}{\gamma}} \bar P^{\frac{\gamma-1}{\gamma}}}{\gamma-1}
			-\frac{P(r_1,y')-  \bar P(r_1)}{ \bar \rho(r_1)}-\frac{ e^{S_0}\bar \rho^{\gamma-1}(r_1)}{\gamma-1} W_4(r_1,y').
		\end{aligned}
	\end{equation*}
	The boundary conditions  on the nozzle walls are
	\begin{equation}\label{slip16}\begin{cases}
			W_2(y_1,\pm \th_0,y_3) =0, &    (y_1, y_3) \in \Sigma_{\th_0}^\pm,\\
			W_3(y_1,y_2,\pm L) =0, &   (y_1, y_2) \in \Sigma_{L}^\pm.
	\end{cases}\end{equation}
	\par Therefore after the coordinates transformation \eqref{ct1}, the transonic shock problem \eqref{1-2} with \eqref{1-7-SUP}, \eqref{1-9-SUP}, \eqref{1-12-sub} and \eqref{1-18-sub} is equivalent to  the following problem:
	\begin{problem}\label{probl2} Find a function $W_7$ defined on $D$ and a vector function $(W_1, \cdots, W_6)$ defined on $\mn$, which solve the equations \eqref{2-1-a-a}-\eqref{2-1-a-c}, \eqref{om7} and \eqref{euler9}-\eqref{rho2-1-co} with the boundary conditions \eqref{rh11}, \eqref{shock19}-\eqref{bc17},  \eqref{bc6}, \eqref{2-17-ex} and \eqref{slip16}.
	\end{problem}
	\begin{theorem}\label{main-1}
		Assume that the compatibility conditions \eqref{1-8-SUP} and \eqref{1-13-sub}  hold. There exists a suitable constant $\sigma_0^\ast > 0$ depending only on the background solution  and the boundary data such that if $0 < \sigma < \sigma_0^\ast$, the problem \eqref{2-1-a-a}-\eqref{2-1-a-c}, \eqref{om7} and \eqref{euler9}-\eqref{rho2-1-co} with the boundary conditions \eqref{rh11},  \eqref{bc6}, \eqref{2-17-ex} and \eqref{slip16} has a unique solution $  (W_1, W_2, W_3, W_4,W_5,W_6)(y_1,y')  $ with the shock front $\ms: y_1 =  W_7(y')$ satisfying the following properties.
		\begin{enumerate}[\rm(1)]
			\item The function $W_7 \in C^{3, \alpha} (\overline{D})$ satisfies the estimate
			\begin{equation}\label{est-xi-1}
				\| W_7\|_{C^{3, \alpha} (\overline{D})} \le C_0^* \sigma,
			\end{equation}
			and the compatibility conditions
			\begin{equation}\label{bc-xi-1}
				\begin{cases}
					\p_{y_2} W_7 (\pm \th_0, y_3)  = \p_{y_2}^3W_7 (\pm \th_0, y_3)  = 0, &   y_3 \in [-L, L], \\
					\p_{y_3} W_7 (y_2, \pm L)  = \p_{y_3}^3 W_7 (y_2, \pm L) = 0, &   y_2 \in [-\th_0, \th_0].
				\end{cases}
			\end{equation}
			\item The solution $(W_1, W_2, W_3, W_4,W_5,W_6) \in \left(C^{2, \alpha} (\overline{\mn})\right)^6$ satisfies
			\begin{equation}\label{est-solu-1}
				\sum_{i=1}^6\| W_i \|_{C^{2, \alpha } (\overline{\mn})} \le C_0^* \sigma,
			\end{equation}
			with the compatibility conditions
			\begin{equation}\label{bc-solu-1}
				\begin{cases}
					(W_2, \p_{y_2}^2 W_2) (y_1, \pm \th_0, y_3) =
					\p_{y_2} (W_1, W_3, W_4,
					W_5,W_6) (y_1, \pm \th_0, y_3) = 0, &(y_1,y_3)\in\Sigma_{\th_0}^\pm, \\
					(W_3, \p_{y_3}^2 W_3) (y_1, y_2, \pm L)=
					\p_{y_3} (W_1, W_2, W_4,
					W_5,W_6) (y_1, y_2, \pm L) = 0,  &(y_1,y_2)\in\Sigma_{L}^\pm.
				\end{cases}
			\end{equation}
		\end{enumerate}
		Here  $C_0^* $ is a positive constant  depending only on the background solution  and the boundary data.
	\end{theorem}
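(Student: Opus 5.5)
We prove Theorem \ref{main-1} by a contraction argument on a closed ball of a weighted H\"older space, following the scheme announced in the introduction. Fix a small $\delta>0$ (to be $\delta=C\sigma$) and let $\mathcal{J}_\delta$ be the set of $(\hat{\bf W},\hat W_7)$ with $\hat W_j\in C^{2,\alpha}(\overline{\mn})$, $\hat W_7\in C^{3,\alpha}(\overline D)$, norms bounded by $\delta$, satisfying the compatibility conditions \eqref{bc-xi-1}, \eqref{bc-solu-1}. Given $(\hat{\bf W},\hat W_7)\in\mathcal J_\delta$, we freeze all coefficients and all nonlinear terms ($H_0,\dots,H_4$, $g_2,g_3,g_4$, $R_1,R_2$, $q_1,q_2^\pm,q_3^\pm$, $E$) at the hatted quantities, and solve a linear problem for $({\bf W},W_7)$. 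The order is dictated by the coupling. \emph{Step 1:} solve the transport equation \eqref{2-1-a-a} for $W_6$ along the characteristics of the frozen field $J_1+\frac{\hat W_2}{\bar U+\hat W_1}J_2+\frac{\hat W_3}{\bar U+\hat W_1}J_3$; since the right side is linear in $W_6$, we obtain an explicit integrating-factor formula, hence $\|W_6\|_{C^{2,\alpha}}\le C\sigma$ directly from \eqref{1-10-SUP}. \emph{Step 2:} substitute this formula into \eqref{2-1-a-b} and \eqref{2-1-a-c}, treating $\phi(\tilde T)W_6$ as a given source, and solve them by integration along the same characteristics. The data $W_4(r_b,\cdot)=b_2 W_7+R_2$ still contain the unknown $W_7$, so we carry $W_4=b_2W_7(\beta(y))+\tilde W_4$ with $\beta$ the foot of the characteristic. \emph{Step 3:} solve \eqref{om7} for $\tilde\om_1$ with data \eqref{bc6}, using the representations of $W_4,W_5$ to control $H_1$ without derivative loss. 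The terms $J_2W_4\,J_3W_5$ are only $C^{1,\alpha}$ a priori, so here we use the structure of $H_1$ (products of first derivatives) together with the explicit characteristic formulas to keep $\tilde\om_1\in C^{1,\alpha}$. Along the way we verify the compatibility conditions on the walls by the symmetry (even/odd reflection) of the data across $y_2=\pm\th_0$, $y_3=\pm L$.

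\emph{Step 4 (the elliptic part)} is the deformation-curl system \eqref{euler9}--\eqref{rho2-1-co} with slip conditions \eqref{slip16}, the exit condition \eqref{2-17-ex}, and the second-order shock condition \eqref{shock19}. We expect this step to be the main obstacle. Because the frozen right-hand sides need not form a divergence-free field, we first introduce $\Pi$ solving $\Delta\Pi=\div(\text{curl data})$ with Dirichlet data, and correct the vorticity. We then solve a div-curl system with homogeneous normal conditions to obtain an auxiliary field that homogenizes the curl equations. The remaining velocity becomes $\nabla\varphi$, and $\varphi$ satisfies a second-order elliptic equation $d_1\p_{y_1}^2\varphi+\frac{1}{y_1^2}\p_{y_2}^2\varphi+\p_{y_3}^2\varphi+\cdots=f$. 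In this equation $W_7$, and hence $W_4$ through the term $b_2W_7$ and the exit condition, appears through the nonlocal trace $\p_{y_1}\varphi(r_b,y')$ via \eqref{rh11-2}.

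The shock condition \eqref{shock19} reads $\Delta'(\p_{y_1}\varphi-b_0b_1\varphi)(r_b,\cdot)=\cdots$, with boundary data \eqref{bc4}--\eqref{bc17} on $\p D$. Inverting this tangential Laplacian with Neumann data gives the oblique condition $\p_{y_1}\varphi-b_0b_1\varphi=\text{data}$ at $y_1=r_b$, up to a constant. The constant is fixed by the solvability condition, which links it to the exit pressure. Substituting back turns the nonlocal term into $\varphi(r_b,y')$. Uniqueness for the resulting oblique problem must then be shown by an energy or Fourier expansion in the eigenfunctions of $\Delta'$ on $D$ with Neumann conditions. Each mode gives a second-order ODE in $y_1$ with two-point boundary conditions, and we check nondegeneracy using $b_0,b_1,b_2>0$ and the sign of $d_1>0$ (subsonic). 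This mode-by-mode check, including the zero mode (which determines the shock position), is where we anticipate the real work, following \cite{WX23}. Schauder estimates in corner domains, with reflection to handle the edges, then give $\|W_1,W_2,W_3\|_{C^{2,\alpha}}\le C(\sigma+\delta^2)$. Finally, $W_7$ is recovered from \eqref{rh11-2} in $C^{3,\alpha}$ using the tangential regularity furnished by \eqref{shock19}.

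\emph{Step 5:} the map $\mathcal T(\hat{\bf W},\hat W_7)=({\bf W},W_7)$ sends $\mathcal J_\delta$ into itself for $\delta=2C\sigma$, $\sigma$ small. It is a contraction in the weaker $C^{1,\alpha}\times C^{2,\alpha}$ norm, obtained by estimating differences of two iterates with the same linear theory and the quadratic structure of all error terms. Closedness of $\mathcal J_\delta$ in the weaker norm yields a fixed point in $\mathcal J_\delta$. At the fixed point we show that $\Pi\equiv0$: the corrected field coincides with the actual curl, so $\div\curl=0$ forces the Dirichlet problem for $\Pi$ to have zero data. We also verify that Lemma \ref{lm1} recovers \eqref{rh11} from \eqref{rh18}, \eqref{rh19} and \eqref{bc4}--\eqref{bc17}. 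Uniqueness follows from the same contraction estimate applied to two solutions.
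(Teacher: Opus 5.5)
Your proposal follows essentially the same route as the paper, and it goes through up to a few small slips. The shared steps are:
\begin{itemize}
\item a frozen-coefficient iteration solving $W_6$, then $W_5,W_4$, then $\tilde\omega_1$;
\item the $\Pi$-corrected deformation--curl system, homogenized by a div--curl field;
\item the potential $\varphi$, with an oblique condition on $y_1=r_b$ obtained by inverting the tangential Laplacian;
\item the nonlocal problem treated as in \cite{WX23};
\item contraction in the weaker norm, with $\Pi\equiv 0$ at the fixed point.
\end{itemize}
The slips are these: the shock-front combination is $\p_{y_1}\varphi-b_0b_1b_3\varphi$ with $b_3=1+d_5(r_b)$, since $M_1(r_b,y')=b_3^{-1}\p_{y_1}\varphi(r_b,y')$; the additive constant from the Neumann inversion is not fixed by a solvability condition (that condition, $\iint_D q_5\,dy'=0$, holds automatically from the wall compatibility conditions) but is absorbed into the free additive constant of $\varphi$, normalized in the paper by $\iint_D h_1\,dy'=0$, with the exit pressure then entering through the zero Fourier mode; and to get a nonlocal term depending on $y'$ only, you must write $W_7(\beta(y))=W_7(y')+\big(\hat W_7(\beta(y))-\hat W_7(y')\big)$ and freeze the bracket, as the paper does in $R_4$.
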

	\section{The stability analysis}\label{iteration}\noindent
	\par In this section, we will design an iteration to prove Theorem \ref{main-1}. Define the solution class $\mathfrak{W}$ consists of the vector function $(W_1, \cdots,W_6, W_7) \in \left(C^{2, \al} (\overline\mn)\right)^6 \times C^{3, \al} (\overline{D})$ satisfying the estimate
	\begin{equation}\label{est3}
		\| ({\bf W}, W_7) \|_{\mathfrak{W}} \co \sum_{i = 1}^6\| W_i \|_{C^{2, \al} (\overline\mn)} + \| W_7\|_{C^{3, \al} (\overline{D})} \le \delta_0,
	\end{equation}
	and the  compatibility conditions
	\begin{equation}\label{bc20}
		\begin{cases}
			(W_2, \p_{y_2}^2 W_2) (y_1, \pm \th_0, y_3) =
			\p_{y_2} (W_1, W_3, W_4,
			W_5,W_6) (y_1, \pm \th_0, y_3) = 0, &(y_1,y_3)\in \Sigma_{\th_0}^\pm, \\
			(W_3, \p_{y_3}^2 W_3) (y_1, y_2, \pm L)=
			\p_{y_3}(W_1, W_2, W_4,
			W_5,W_6) (y_1, y_2, \pm L) = 0, & (y_1,y_2)\in \Sigma_{L}^\pm,\\
			(\p_{y_2} W_7, \p_{y_2}^3 W_7) (\pm \th_0, y_3) = 0, & y_3 \in [-L , L], \\
			(\p_{y_3} W_7, \p_{y_3}^3 W_7) (y_2, \pm L) = 0, &  y_2 \in [-\th_0, \th_0],
		\end{cases}
	\end{equation}
	where $\delta_0$  is a  positive constant to be determined later.
	For any given $(\h{\bf W}, \h W_7)\in \mathfrak{W}$, we will construct an iterative procedure that generates a new $({\bf W},  W_7)$, and thus one can define a mapping  from $\mathfrak{W}$ to itself by choosing a suitably small positive constant $\delta$. Note that $W_6$, $W_5$ and $W_4$ satisfy the transport equations.  We first solve the transport equation for $W_6$ to obtain its expression.   Using
	the expression of $W_6$,  one can   solve the  transport equations for  $W_5$ and $W_4$  to derive their  representation formulas.  Note that the boundary data for $W_6$, $W_5$ and $W_4$ involves the shock position $W_7$, using the condition \eqref{rh11-2},  $W_6$ and $W_5$ can be regarded as  higher order terms and the principal term in $W_4$ is given by a scalar multiple of $W_1(r_b,y')$. Substituting these into \eqref{euler9}-\eqref{rho2-1-co},  we derive a  first order deformation-curl system for $(W_1,W_2, W_3)$  with a nonlocal term $W_1(r_b,y')$  and some boundary conditions from which $(W_1,W_2, W_3)$ can be solved uniquely. Now we give a detailed derivation of this procedure.
	\par {\bf Step 1. The determination of  $W_7$.}
	\par The shock front $W_7$ is uniquely determined by
	\begin{equation}\label{6-2}
		W_7(y')=\frac{1}{b_{1} }W_1(r_b,y')-\frac{1}{b_{1} }  R_1(\h{\bf W}(r_b,y'),\h W_7(y')),
	\end{equation}
	provided that $W_1(r_b,y')$ is obtained.
	\par {\bf Step 2. Solving the transport equation for $W_6$.}
	\par It follows from \eqref{2-1-a-a} that the fraction of
	unburned gas will be determined by
	\begin{equation}\label{6-3}
		\begin{cases}
			\left(\p_{y_1} + I_2(\h{\bf W},\h W_7)\p_{y_2} + I_3(\h{\bf W},\h W_7)\p_{y_3}\right)W_6= I_4(\h{\bf W},\h W_7) W_6,\\
			W_6(r_b,y')=Y^-(r_b+\h W_7(y'),y'),
		\end{cases}
	\end{equation}
	where
	\begin{equation*}
		\begin{aligned}
			I_2(\h{\bf W},\h W_7)&=\frac{(r_1-r_b-\h W_7)\hat{W}_2}
			{(r_1-r_b)J_0^{\hat{W}_7}
				(\bar{U}(J_0^{\hat{W}_7})+\hat{W}_1)
				+(y_1-r_1)(\hat{W}_2\p_{y_2}\hat{W}_7
				+J_0^{\hat{W}_7}\hat{W}_3\p_{y_3}\hat{W}_7)},\\
			I_3(\h{\bf W},\h W_7)&=\frac{(r_1-r_b-\h W_7)J_0^{\hat{W}_7}\hat{W}_3}
			{(r_1-r_b){ J_0^{\hat{W}_7}}
				(\bar{U}(J_0^{\hat{W}_7})+\hat{W}_1)
				+(y_1-r_1)(\hat{W}_2\p_{y_2}\hat{W}_7
				+J_0^{\hat{W}_7}\hat{W}_3\p_{y_3}\hat{W}_7)},\\
			I_4(\h{\bf W},\h W_7)&=-\frac{(r_1-r_b-\h W_7){ J_0^{\hat{W}_7}}\phi(\h T)}
			{(r_1-r_b){J_0^{\hat{W}_7}}
				(\bar{U}(J_0^{\hat{W}_7})+\hat{W}_1)
				+(y_1-r_1)(\hat{W}_2\p_{y_2}\hat{W}_7
				+J_0^{\hat{W}_7}\hat{W}_3\p_{y_3}\hat{W}_7)},
		\end{aligned}
	\end{equation*}
	with
	\begin{equation*}
		\h T=\frac1{\gamma}\left( B_0^-+\h W_5 -\frac{1}{2} \left( \bar{U}(T_0^{\h W_7})+\h W_1 \right)^2-\frac{1}{2}\h W_2^2-\frac{1}{2} \h W_3^2 \right).
	\end{equation*}
	For any  $({\bf \hat{W}}, \hat{W}_7) \in\mathfrak{W}$, one has $(I_2, I_3,I_4) \in \left(C^{2, \al } (\overline{\mn})\right)^3$.  We consider the following ODE system
	\be\label{ode1}
	\begin{cases}
		\f {\de\h{y}_2 (\tau; y_1,y')}{d \tau} = I_2 (\tau, \h{y}_2 (\tau; y_1,y'), \h{y}_3 (\tau; y_1,y') ), &  \tau \in [r_b, r_1], \\
		\f {\de \h{y}_3 (\tau; y_1,y')}{d \tau} = I_3 (\tau, \h{y}_2 (\tau; y_1,y'), \h{y}_3 (\tau; y_1,y') ), &  \tau \in [r_b, r_1], \\
		\h{y}_2 (y_1; y_1,y') = y_2, \quad  \h{y}_3 (y_1; y_1,y') = y_3.
	\end{cases}
	\ee
	Denote $(\eta_2 (y_1,y'), \eta_3 (y_1,y')) = (\h{y}_2 (r_b; y_1,y'), \h{y}_3 (r_b; y_1,y'))$. Since $({\bf \hat{W}}, \hat{W}_7) \in \mathfrak{W}$, it follows from the compatibility condition \eqref{bc20} that
	\begin{equation}\label{bc21}
		\begin{cases}
			I_2 (y_1, \pm \th_0, y_3) = \p_{y_2} I_3 (y_1, \pm \th_0, y_3) =\p_{y_2}  I_4 (y_1,  \pm \th_0,y_3)= 0, &(y_1,y_3)\in\Sigma_{\th_0}^\pm, \\
			I_3 (y_1, y_2, \pm L) = \p_{y_3} I_2 (y_1, y_2, \pm L)=\p_{y_3} I_4 (y_1, y_2, \pm L) = 0, & (y_1,y_2)\in\Sigma_{L}^\pm.
		\end{cases}
	\end{equation}
	Due to the uniqueness of the solution \eqref{ode1} and \eqref{bc21}, one obtains
	\be\label{bc22}
	\begin{cases}
		\h{y}_2 (\tau; y_1, \pm \th_0, y_3) = \pm \th_0, &  \tau \in [r_b,r_1], \, (y_1,y_3)\in\Sigma_{\th_0}^\pm, \\
		\h{y}_3 (\tau; y_1, y_2, \pm L) = \pm L, &  \tau \in [r_b,r_1], \, (y_1,y_2)\in\Sigma_{L}^\pm,
	\end{cases}
	\ee
	and
	\be\label{bc23}
	\begin{cases}
		\eta_2 (y_1, \pm \th_0, y_3) = \pm \th_0, \  &(y_1,y_3)\in\Sigma_{\th_0}^\pm, \\
		\eta_3 (y_1, y_2, \pm L) = \pm L, \ \ &(y_1, y_2) \in \Sigma_{L}^\pm.
	\end{cases}
	\ee
	The standard theory of systems of ordinary differential equations and \eqref{bc22} guarantee the existence and uniqueness of $(\h{y}_2 (\tau;y_1,y'), \h{y}_3 (\tau; y_1,y'))$ on the whole interval $[r_b, r_1]$. Furthermore, there holds
	\be\label{est5}
	\sum_{i = 2}^3 \| \eta_i (y_1,y') - y_i \|_{C^{2, \al } (\overline\mn)} \le C \| (\h{\bf W}, \h W_7) \|_{\mathfrak{W}}.
	\ee
	\par Differentiating the first equation in \eqref{ode1} with respect to $y_3$ and restricting the resulting equation on $y_3 = \pm L$, one has
	\be\no
	\begin{cases}
		\f {\de}{\de \tau} \p_{y_3} \h{y}_2 (\tau; y_1, y_2, \pm L) = \p_{y_2} I_2 (\tau, \h{y}_2 (\tau; y_1,y'), \h{y}_3 (\tau; y_1,y')) \p_{y_3} \h{y}_2 (\tau; y_1, y_2, \pm L),  \\
		\p_{y_3} \h{y}_2 (y_1; y_1, y_2, \pm L) = 0,
	\end{cases}
	\ee
	from which one obtains that $\p_{y_3} \h{y}_2 (\tau; y_1, y_2, \pm L) = 0$ for any $\tau \in [r_b, r_1]$. Similarly, we have $ \p_{y_2} \h{y}_3 (\tau; y_1,$\\$ \pm \th_0, y_3) = 0$ for any $\tau \in [r_b, r_1]$. Therefore,
	\be\label{bc24}
	\begin{cases}
		\p_{y_3} \eta_2 (y_1, y_2, \pm L) = 0 , & (y_1, y_2) \in \Sigma_{L}^\pm, \\
		\p_{y_2} \eta_3 (y_1, \pm \th_0, y_3) = 0 , & (y_1, y_3) \in \Sigma_{\th_0}^\pm.
	\end{cases}
	\ee
	By the characteristic method, one has
	\begin{equation}\label{6-4}
		W_6 (y_1,y')  =   Y^-(r_b+\h W_7(\eta_2 (y_1,y'), \eta_3 (y_1,y')),\eta_2 (y_1,y'), \eta_3 (y_1,y')) e^{\int_{r_b}^{y_1} I_4(\h{\bf W},\h W_7)(\tau,\h{{y }}_2(\tau;y_1,y'),\h{{ y}}_3(\tau;y_1,y'))\de \tau}.
	\end{equation}
	Then it holds that
	\begin{equation}\label{W6-est}
		\|W_6\|_{C^{2,\alpha}(\overline\mn)}
		\leq  C_*(\sigma+\sigma \delta_0).
	\end{equation}
	It follows from \eqref{1-11-SUP}, \eqref{bc20}, \eqref{bc21} and \eqref{bc24} that
	\be\label{W6-cp}
	\begin{cases}
		\p_{y_2} W_6 (y_1,\pm\th_0, y_3) = 0 , & (y_1, y_3) \in \Sigma_{\th_0}^\pm, \\
		\p_{y_3}  W_6 (y_1,y_2, \pm L) = 0 , & (y_1, y_2) \in \Sigma_{L}^\pm.
	\end{cases}
	\ee
	\par {\bf Step 3. Solving the transport equations for $W_5$ and $W_4$.}
	\par It follows from \eqref{2-1-a-b} and \eqref{2-1-a-c} that the Bernoulli quantity and the entropy will be determined by
	\begin{equation}\label{6-7}
		\begin{cases}
			\left(\p_{y_1} + I_2(\h{\bf W},\h W_7)\p_{y_2} + I_3(\h{\bf W},\h W_7)\p_{y_3}\right)W_5=I_5(W_6,\h{\bf W},\h W_7) ,\\
			W_5(r_b,y')=B^-(r_b+\h W_7(y'),y')- B_0^-,
		\end{cases}
	\end{equation}
	and
	\begin{equation}\label{6-8}
		\begin{cases}
			\left(\p_{y_1} + I_2(\h{\bf W},\h W_7)\p_{y_2} + I_3(\h{\bf W},\h W_7)\p_{y_3}\right)W_4=I_6(W_6,\h{\bf W},\h W_7), \\
			W_4(r_b,y')=b_2 W_7(y')+R_2(\h{\bf W}(r_b,y'),\h W_7(y')),
		\end{cases}
	\end{equation}
	where
	\begin{equation*}
		\begin{aligned}
			I_5(W_6,\h{\bf W},\h W_7)&=\frac{(r_1-r_b-\h W_7)J_0^{\hat{W}_7}\phi(\h T)W_6}
			{(r_1-r_b)J_0^{\hat{W}_7}
				(\bar{U}(J_0^{\hat{W}_7})+\hat{W}_1)
				+(y_1-r_1)(\hat{W}_2\p_{y_2}\hat{W}_7
				+J_0^{\hat{W}_7}\hat{W}_3\p_{y_3}\hat{W}_7)},\\
			I_6(W_6,\h{\bf W},\h W_7)&=\frac{(r_1-r_b-\h W_7)J_0^{\hat{W}_7}\phi(\h T)W_6}
			{\h T\left((r_1-r_b)J_0^{\hat{W}_7}
				\left(\bar{U}(J_0^{\hat{W}_7})+\hat{W}_1 \right)
				+(y_1-r_1) \left(\hat{W}_2\p_{y_2}\hat{W}_7
				+J_0^{\hat{W}_7}\hat{W}_3\p_{y_3}\hat{W}_7 \right) \right)}.\\
		\end{aligned}
	\end{equation*}
	Then  it is easy to verify that $(I_5,I_6) \in \left(C^{2, \al } (\overline{\mn})\right)^2$ satisfies
	\be\label{bc21-hy}
	\begin{cases}
		\p_{y_2} (I_5,I_6) (y_1, \pm \th_0, y_3) = 0, &(y_1,y_3)\in\Sigma_{\th_0}^\pm, \\
		\p_{y_3} (I_5,I_6) (y_1, y_2, \pm L)=0 & (y_1,y_2)\in\Sigma_{L}^\pm.
	\end{cases}
	\ee
	\par Integrating the equation in \eqref{6-7} and \eqref{6-8}  along the trajectory $(\tau,\h{{ y}}_2(\tau;y_1,y'),\h{{y}}_3(\tau;y_1,y'))$ yields that
	\begin{equation}\label{6-9}
		\begin{aligned}
			W_5 (y_1,y')  &=   B^-(r_b+\h W_7(\eta_2 (y_1,y'), \eta_3 (y_1,y')),\eta_2 (y_1,y'), \eta_3 (y_1,y'))-B_0^-\\
			&\quad+\int_{r_b}^{y_1} I_5(W_6,\h{\bf W},\h W_7)\left(\tau,\h{{y }}_2(\tau;y_1,y'),\h{{ y}}_3(\tau;y_1,y')\right)\de \tau,\\
		\end{aligned}
	\end{equation}
	and
	\begin{equation*}
		\begin{aligned}
			W_4 (y_1,y')  &= b_2 W_7(\eta_2 (y_1,y'), \eta_3 (y_1,y'))\\
			&\quad+R_2\left(\h{\bf W}(r_b,\eta_2 (y_1,y'), \eta_3 (y_1,y')), \h W_7(\eta_2 (y_1,y'), \eta_3 (y_1,y'))\right) \\
			&\quad+\int_{r_b}^{y_1} I_6(W_6,\h{\bf W},\h W_7)\left(\tau,\h{{y }}_2(\tau;y_1,y'),\h{{ y}}_3(\tau;y_1,y')\right)\de \tau\\
			&=b_2W_7(y')+b_2\left( W_7(\eta_2 (y_1,y'), \eta_3 (y_1,y'))-W_7(y')\right)\\
			&\quad+R_2\left(\h{\bf W}(r_b,\eta_2 (y_1,y'), \eta_3 (y_1,y')),\h W_7(\eta_2 (y_1,y'), \eta_3 (y_1,y'))\right) \\
			&\quad+\int_{r_b}^{y_1} I_6(W_6,\h{\bf W},\h W_7)\left(\tau,\h{{y }}_2(\tau;y_1,y'),\h{{ y}}_3(\tau;y_1,y')\right)\de \tau\\  \end{aligned}
	\end{equation*}\begin{equation}\label{6-10}\begin{aligned}
			&=\frac{b_2}{b_1}W_1(r_b,y')+b_2\left( W_7(\eta_2 (y_1,y'), \eta_3 (y_1,y'))-W_7(y')\right)\\
			&\quad+R_3\left(\h{\bf W}(r_b,\eta_2 (y_1,y'), \eta_3 (y_1,y')),\h W_7(\eta_2 (y_1,y'), \eta_3 (y_1,y'))\right) \\
			&\quad+\int_{r_b}^{y_1} I_6(W_6,\h{\bf W},\h W_7)\left(\tau,\h{{y }}_2(\tau;y_1,y'),\h{{ y}}_3(\tau;y_1,y')\right)\de \tau,\\
		\end{aligned}
	\end{equation}
	where
	\begin{equation*}
	R_3 ({\bf \hat{W}} (r_b, y'), \hat{W}_7 (y')) = - \f {b_2}{b_1} R_1 ({\bf \hat{W}} (r_b, y'), \hat{W}_7 (y')) + R_2({\bf \hat{W}} (r_b, \eta_2, \eta_3), \hat{W}_7 (\eta_2, \eta_3) ).
	\end{equation*}
	Since $W_7 (y')$ is still unknown, \eqref{6-10} can be rewritten as
	\begin{equation}\label{6-11}
		\begin{aligned}
			W_4(y_1,y')&= \f {b_2}{b_1} W_1 (r_b, y') +R_4 \left(\h{\bf W}(r_b,\eta_2 (y_1,y'), \eta_3 (y_1,y')),\h W_7(\eta_2 (y_1,y'), \eta_3 (y_1,y'))\right)\\
			&\quad+\int_{r_b}^{y_1} I_6(W_6,\h{\bf W},\h W_7)\left(\tau,\h{{y }}_2(\tau;y_1,y'),\h{{ y}}_3(\tau;y_1,y')\right)\de \tau,\\
		\end{aligned}
	\end{equation}
	with \begin{equation*}
		\begin{aligned}
			R_4(\h{\bf W},\h W_7)  &= b_2 \left( \h W_7 (\eta_2 (y_1,y'), \eta_3 (y_1,y'))-\h W_7 (y') \right)\\
			&\quad+ R_3 \left(\h{\bf W}(r_b,\eta_2 (y_1,y'), \eta_3 (y_1,y')),\h W_7(\eta_2 (y_1,y'), \eta_3 (y_1,y'))\right).
	\end{aligned}\end{equation*}
	Therefore, it holds that
	\begin{equation}\label{6-12}
		\|W_5\|_{C^{2,\alpha}(\overline\mn)} \leq C_*\sigma \left( \| \hat{W}_7 \|_{C^{2,\alpha}(\overline{D})} + \sum_{i= 2}^{3} \| \eta_i \|_{C^{2, \alpha} (\overline{\mn})} \right)
		+C_* \|I_5\|_{C^{2,\alpha}(\overline\mn)}
		\leq  C_*(\sigma \delta_0+ \delta_0^2).
	\end{equation}
	and
	\begin{equation}\label{3-3}
		\begin{aligned}
			\|W_4\|_{C^{2,\alpha}(\overline\mn)}\leq & C_*\left(\|W_1(r_b,\cdot)\|_{C^{2,\alpha}(\overline{D})}
			+\|R_4\|_{C^{2,\alpha}(\overline\mn)}+\|I_6\|_{C^{2,\alpha}(\overline\mn)}\right)\\
			\leq & C_*\bigg(\|W_1(r_b,\cdot)\|_{C^{2,\alpha}(\overline{D})}
			+\|\hat{W}_7\|_{C^{3,\alpha}(\overline{D})}\sum_{j=2}^3
			\|\eta_j(y)-y_j\|_{C^{2,\alpha}(\overline\mn)}\bigg)\\
			&+ C_*\left(\sigma \|(\hat{{\bf W}}, \hat{W}_7)\|_{\mathfrak{W}}+\|(\hat{{\bf W}}, \hat{W}_7)\|_{\mathfrak{W}}^2\right)\\
			\leq & C_*\|W_1(r_b,\cdot)\|_{C^{2,\alpha}(\overline D)}+ C_*(\sigma \delta_0+ \delta_0^2).
		\end{aligned}
	\end{equation}
	Furthermore, it follows from \eqref{1-11-SUP}, \eqref{bc20},   \eqref{bc24} and \eqref{bc21-hy} that
	\be\label{W5-cp}
	\begin{cases}
		\p_{y_2} W_5 (y_1,\pm\th_0, y_3) = 0 , & (y_1, y_3) \in \Sigma_{\th_0}^\pm, \\
		\p_{y_3}  W_5 (y_1,y_2, \pm L) = 0 , & (y_1, y_2) \in \Sigma_{L}^\pm.
	\end{cases}
	\ee
	\par Since $(\hat{{\bf W}}, \hat{W}_7)\in\mathfrak{W}$ satisfies the compatibility condition \eqref{bc20} and the incoming supersonic flow satisfies \eqref{1-8-SUP},  one could use  the formulas of $\ma_2, \ma_3, \ma, R_{1i}$ $(i=1,2,3)$  to verify that
	\be\label{j211}\begin{cases}
		\ma_2(\hat{{\bf W}}(r_b,y'),\hat{W}_7(y'))\bigg|_{y_2=\pm \th_0}=\p_{y_2}^2\left(\ma_2(\hat{{\bf W}}(r_b,y'),\hat{W}_7 (y'))\right)\bigg|_{y_2=\pm \th_0}=0,   &y_3\in [-L,L],\\
		\p_{y_2}\left(\ma_3(\hat{{\bf W}}(r_b,y'),\hat{W}_7(y'))\right)\bigg|_{y_2=\pm \th_0}=  \p_{y_2}\left(\ma(\hat{{\bf W}}(r_b,y'),\hat{W}_7(y'))\right)\bigg|_{y_2=\pm \th_0}=0, &y_3\in [-L,L],\\
		\ma_3(\hat{{\bf W}}(r_b,y'),\hat{W}_7 (y')))\bigg|_{y_3=\pm L}=\p_{y_3}^2\left(\ma_3(\hat{{\bf W}}(r_b,y'),\hat{W}_7 (y'))\right)\bigg|_{y_3=\pm L}=0, &y_2\in [-\th_0,\th_0],\\
		\p_{y_3}\left(\ma_2(\hat{{\bf W}}(r_b,y'),\hat{W}_7 (y'))\right)\bigg|_{y_3=\pm L}=  \p_{y_3}\left(\ma(\hat{{\bf W}}(r_b,y'),\hat{W}_7(y')) \right)\bigg|_{y_3=\pm L}=0,  &y_2\in [-\th_0,\th_0],\\
		\p_{y_2}\left( R_{1i}(\hat{{\bf W}}(r_b,y'),\hat{W}_7 (y'))\right)\bigg|_{y_2=\pm \th_0}=0,  &y_3\in [-L,L],\\
		\p_{y_3}\left( R_{1i}(\hat{{\bf W}}(r_b,y'),\hat{W}_7(y'))\right)\bigg|_{y_3=\pm L}=0,\ \  &y_2\in [-\th_0,\th_0].
	\end{cases}\ee
	Thus for $k=1,2,3$,
	\be\label{rv41}\begin{cases}
		\p_{y_2} \left(R_{k}(\hat{{\bf W}}(r_b,y'),\hat{W}_7 (y'))\right)\bigg|_{y_2=\pm \th_0}=0,&  y_3\in [-L,L],\\
		\p_{y_3}\left(R_{k}(\hat{{\bf W}}(r_b,y'),\hat{W}_7(y'))\right)\bigg|_{y_3=\pm L}=0,& y_2\in [-\th_0,\th_0].
	\end{cases}\ee
	These, together with   \eqref{bc23}, \eqref{bc24} and \eqref{bc21-hy}, yield that
	\be\label{rv42}
	\begin{cases}
		\p_{y_2}\bigg(R_4(\h{\bf W}(r_b,\eta_2 (y_1,y'), \eta_3 (y_1,y')),\\
		\qquad\quad\h W_7(\eta_2 (y_1,y'), \eta_3 (y_1,y'))\bigg)(y_1,\pm \th_0, y_3)=0, \quad (y_1, y_3) \in\Sigma_{\th_0}^\pm, \\
		\p_{y_3}\bigg(R_4(\h{\bf W}(r_b,\eta_2 (y_1,y'), \eta_3 (y_1,y')),\\
		\qquad\quad\h W_7(\eta_2 (y_1,y'), \eta_3 (y_1,y'))\bigg)(y_1, y_2,\pm L)=0, \quad (y_1, y_2) \in\Sigma_{L}^\pm, \\
	\end{cases}\ee
	and
	\be\label{rv43}
	\begin{cases}
		\p_{y_2}W_4(y_1,\pm \th_0, y_3)=\f {b_2}{b_1} \p_{y_2}W_1 (r_b,\pm \th_0, y_3)=0,\ \ (y_1, y_3) \in \Sigma_{\th_0}^\pm, \\
		\p_{y_3}W_4(y_1, y_2,\pm L)=\f {b_2}{b_1} \p_{y_3}W_1(r_b, y_2,\pm L)=0,\ \ \ (y_1, y_2) \in \Sigma_{L}^\pm.
	\end{cases}\ee
	
	\par {\bf Step 4. Solving the transport equation for $\e\om_1$.}
	\par Due to \eqref{om7} and \eqref{bc6}, it suffices to consider the following problem
	\begin{equation}\label{om7-co}
		\begin{cases}
			\left(\p_{y_1} + I_2(\h{\bf W},\h W_7)\p_{y_2} + I_3(\h{\bf W},\h W_7)\p_{y_3} \right) \e{\om}_1
			+  \mu(\h{\bf W},\h W_7)\e\om_1
			= G_1 (\h{\bf W}, \h W_7),\\
			\tilde{\omega}_1(r_b,y')=R_5(\hat{{\bf W}}(r_b,y'), \hat{W}_7(y')),
		\end{cases}
	\end{equation}
	where
	\begin{align*}
			& \mu (\hat{{\bf W}}, \hat{W}_7) = \frac{  (r_1 - \hat{W}_7- r_b) \left(\bar{U} + \hat{W}_1\right)  J_0^{\hat{W}_7} }{\left( \bar U+ \hat{W}_1 \right) J_0^{\hat{W}_7} (r_1 - r_b)  + (y_1 - r_1) \bigg( \hat{W}_2 \p_{y_2} \hat{W}_7 + J_0^{\hat{W}_7} \hat{W}_3 \p_{y_3} \hat{W}_7 \bigg)}\\
			&\qquad\qquad\quad \times \bigg( \f1{J_0^{\hat{W}_7}}+ J^{\hat{W}_7}_2 \bigg( \f {\hat{W}_2}{\bar{U}  + \hat{W}_1}\bigg) + J^{\hat{W}_7}_3 \bigg( \f {\hat{W}_3}{\bar{U} + \hat{W}_1} \bigg) \bigg), \\
			& G_1 (\hat{{\bf W}}, \hat{W}_7) = \frac{ (r_1 - \hat{W}_7- r_b) J_0^{\hat{W}_7}  \left( \bar{U} + \hat{W}_1 \right) H_1 (\hat{{\bf W}}, \hat{W}_7)}{\left( \bar U+ \hat{W}_1 \right) J_0^{\hat{W}_7} (r_1 - r_b)  + (y_1 - r_1) \left( \hat{W}_2 \p_{y_2} \hat{W}_7 + J_0^{\hat{W}_7} \hat{W}_3 \p_{y_3} \hat{W}_7 \right)},\\
			& R_5(\hat{{\bf W}}(r_b,y'), \hat{W}_7(y')) = \frac 1{b_0} \left( \p_{y_3} g_2 (\h{\bf W} (r_b, y'),\h W_7 (y')) - \frac1{r_b}\p_{y_2} g_3 (\h{\bf W} (r_b, y'), \h W_7 (y')) \right)\\
			& \qquad\qquad\qquad\qquad\quad
			-\frac{ \h W_7\p_{y_2}\h W_3(r_b,y')}{r_b(r_b+\h W_7)}-\frac{(r_b-r_1)
				\p_{y_3}\h W_7\p_{y_1}\h W_2(r_b,y')}{r_1-r_b-\h W_7}\\
			& \qquad\qquad\qquad\qquad\quad
			+\frac{r_b-r_1}{r_b+\h W_7}\frac{\p_{y_2}\h W_7 \p_{y_1}\h W_3(r_b,y')}{r_1-r_b-\h W_7}.
	\end{align*}
	Since $(\hat{{\bf W}},\hat{W}_7)\in\mathfrak{W}$ satisfies the compatibility condition \eqref{bc20}, using the first formula in \eqref{rh11-1} and \eqref{bc6-bo}, one can verify that
	\be\label{vor5011}\begin{cases}
		\tilde{\omega}_1(r_b,\pm \theta_0, y_3)=0, &  y_3\in [-L,L],\\
		\tilde{\omega}_1(r_b,y_2,\pm L)=0,&  y_2\in [-\theta_0,\theta_0],\\
		G_1(\hat{{\bf W}},\hat{W}_7)(y_1,\pm\theta_0, y_3)=0, & (y_1, y_3) \in \Sigma_{\th_0}^\pm,\\
		G_1(\hat{{\bf W}},\hat{W}_7)(y_1,y_2,\pm L)=0, & (y_1, y_2) \in \Sigma_{L}^\pm.
	\end{cases}\ee
	Integrating the equation in \eqref{om7-co} along the trajectory $(\tau,\h{{ y}}_2(\tau;y_1,y'),\h{{y}}_3(\tau;y_1,y'))$ yields
	\begin{equation}\label{vor502}
		\begin{split}
			\tilde{\omega}_1(y_1,y')
			&= \exp\left( -\!\int_{r_b}^{y_1} \mu\big(t, \hat{y}_2(t; y_1, y'), \hat{y}_3(t; y_1, y')\big)\, dt \right) \\
			&\quad \times \Bigg[
			R_5\bigg(\h{\bf W}(r_b,\eta_2 (y_1,y'), \eta_3 (y_1,y')),\h W_7(\eta_2 (y_1,y'), \eta_3 (y_1,y'))\bigg) \\
			&\quad\quad + \int_{r_b}^{y_1}
			\exp\!\bigg( \int_{r_b}^s \mu\big(t, \hat{y}_2(t; y_1, y'), \hat{y}_3(t; y_1, y')\big)\, dt \bigg)
			G_1\big( s, \hat{y}_2(s; y_1, y'), \hat{y}_3(s; y_1, y') \big) \, ds
			\Bigg].
		\end{split}
	\end{equation}
	Thus the following estimate holds
	\begin{equation}\label{vor503}
			\|\tilde{\omega}_1\|_{C^{1,\alpha}(\overline\mn)}\leq C_*\left(\|\tilde{\omega}_1(r_b,\cdot)\|_{C^{1,\alpha}(\overline{D})}+\|G_1(\hat{{\bf W}},\hat{W}_7)\|_{C^{1,\alpha}(\overline\mn)}\right)
			\leq C_*(\sigma + \sigma\delta_0+\delta_0^2).
	\end{equation}
	Moreover, using \eqref{bc23}, \eqref{bc24}, \eqref{vor5011} and \eqref{vor502}, the following compatibility conditions hold:
	\be\label{vor5041}\begin{cases}
		\tilde{\omega}_1(y_1,\pm \th_0, y_3)=0,& (y_1, y_3) \in \Sigma_{\th_0}^\pm,\\
		\tilde{\omega}_1(y_1,y_2, \pm L)=0,& (y_1, y_2) \in \Sigma_{L}^\pm.
	\end{cases}\ee
	\par Substituting \eqref{6-11} and \eqref{vor502} into \eqref{euler9}-\eqref{euler11} yields
	\be\label{vor504}
	&&\frac{1}{y_1}\p_{y_2} W_3-\p_{y_3} W_2=G_2(\hat{{\bf W}},\hat{W}_7),\\\label{vor505}
	&&\p_{y_3} W_1- \p_{y_1} W_3 + d_5 (y_1) \p_{y_3} W_1(r_b,y')=G_3(W_5,W_6, \hat{{\bf W}},\hat{W}_7),\\\label{Wor506}
	&&\p_{y_1} W_2+\frac{W_2}{y_1}-\frac{1}{y_1} \p_{y_2} W_1 -\frac{1}{y_1} d_5 (y_1) \p_{y_2}W_1(r_b,y')=G_4(W_5, W_6,\hat{{\bf W}},\hat{W}_7),
	\ee
	where
	\begin{equation*}
		\begin{aligned}
			& d_5 (y_1) = \frac{b_2}{b_1}\f  {e^{S_0}\bar \rho^{\gamma-1}(y_1)}{(\gamma-1)\bar{U} (y_1)}, \ \
			G_2(\hat{{\bf W}},\hat{W}_7)=\tilde{\omega}_1(y)+H_2(\hat{{\bf W}},\hat{W}_7),\\
			&G_3(W_5,W_6,\hat{{\bf W}},\hat{W}_7)=\frac{\hat{W}_2\tilde{\omega}_1+J_3^{\hat{W}_7} W_5}{\bar{U}(J_0^{\hat{W}_7})+\hat{W}_1}+ H_3(\hat{{\bf W}},\hat{W}_7)- \f  {e^{S_0}\bar \rho^{\gamma-1}(y_1)}{(\gamma-1)\bar{U} (y_1)}\p_{y_3}   \left(R_4(\hat{{\bf W}},\hat{W}_7)\right)\\
			&\qquad\qquad\qquad\qquad\quad- \f  {e^{S_0}\bar \rho^{\gamma-1}(y_1)}{(\gamma-1)\bar{U} (y_1)}\int_{r_b}^{y_1} \p_{y_3}\left(I_6(W_6,\h{\bf W},\h W_7)\right)\de \tau,\\\end{aligned}
	\end{equation*}\begin{equation*}
		\begin{aligned}
			&G_4(W_5,W_6,\hat{{\bf W}},\hat{W}_7)=\frac{\hat{W}_3\tilde{\omega}_1-J_2^{\hat{W}_7} W_5}{\bar{U}(J_0^{\hat{W}_7})+\hat{W}_1}+H_4(\hat{{\bf W}},\hat{W}_7)+\f  {e^{S_0}\bar \rho^{\gamma-1}(y_1)}{(\gamma-1)y_1\bar{U} (y_1)}\p_{y_2}  \left(R_4(\hat{{\bf W}},\hat{W}_7)\right)\\
			&\qquad\qquad\qquad\qquad\quad + \f  {e^{S_0}\bar \rho^{\gamma-1}(y_1)}{(\gamma-1)y_1\bar{U} (y_1)}\int_{r_b}^{y_1} \p_{y_2}\left(I_6(W_6,\h{\bf W},\h W_7)\right)\de \tau.
		\end{aligned}
	\end{equation*}
	Using \eqref{bc20}, \eqref{W6-cp},  \eqref{bc21-hy}, \eqref{W5-cp}, \eqref{rv42} and \eqref{vor5041}, one can further verify the following compatibility conditions
	\be\label{cp20}\begin{cases}
		\left(G_2(\h{\bf W}, \h W_7), G_4(\h{\bf W}, \h W_7),\p_{y_2}G_3(\h{\bf W}, \h W_7)\right)(y_1,\pm \th_0, y_3)=0,&( y_1, y_3) \in \Sigma_{\th_0}^\pm,\\
		\left(G_2(\h{\bf W}, \h W_7), G_3(\h{\bf W}, \h W_7),\p_{y_3}G_4(\h{\bf W}, \h W_7)\right)(y_1,y_2, \pm L)=0,& (y_1, y_2) \in \Sigma_{L}^\pm.
	\end{cases}\ee
	Furthermore, \eqref{rho2-1-co} implies that
	\begin{equation}\label{rho2-1-ell}
		\begin{aligned}
			d_1(y_1)\p_{y_1} W_1 + \frac1{y_1}\p_{y_2} W_2 +   \p_{y_3} W_3+\frac{ W_1}{y_1}+d_2(y_1)W_1=
			G_0(W_5,W_6,\h{\bf W},\h W_7),
		\end{aligned}
	\end{equation}
	where
	$$G_0(W_5,W_6,\h{\bf W},\h W_7)=d_3(y_1)W_5+d_4(y_1)W_6
	+H_0(\h{\bf W},\h W_7).$$
	It follows from \eqref{2-17-ex} and \eqref{6-11} that the boundary condition for $W_1$ at the exit becomes
	\be\label{pres4}
	W_1(r_1,y')+ d_5 (r_1) W_1(r_b,y')=q_4(y'),
	\ee
	where
	\begin{equation*}
		\begin{aligned}
			q_4(y')&=-\f  {e^{S_0}\bar \rho^{\gamma-1}(r_1)}{(\gamma-1)\bar{U} (r_1)} R_4 \bigg(\h{\bf W}(r_b,\eta_2 (r_1,y'), \eta_3 (r_1,y')),\h W_7(\eta_2 (r_1,y'), \eta_3 (r_1,y'))\bigg)\\
			&\quad-\f  {e^{S_0}\bar \rho^{\gamma-1}(r_1)}{(\gamma-1)\bar{U} (r_1)}\int_{r_b}^{r_1} I_6(W_6,\h{\bf W},\h W_7)\bigg(\tau,\h{{y }}_2(\tau;y_1,y'),\h{{ y}}_3(\tau;y_1,y')\bigg)\de \tau
			\\
			&\quad +\frac{W_5(r_1,y')}{\bar{U}(r_1)}-\frac{\sigma P_{ex}(y')}{(\bar{\rho}\bar{U})(r_1)}-\frac{1}{2\bar{U}(r_1)}\sum_{j=1}^3 \hat{W}_j^2(r_1,y')-\frac{E(\hat{{\bf W}}(r_1,y'))}{\bar{U}(r_1)} .
		\end{aligned}
	\end{equation*}
	Using \eqref{1-13-sub} and the explicit expression of $E(\hat{{\bf W}}(r_1,y'))$ in \eqref{2-17}, one can verify that
	\be\label{err4}\begin{cases}
		\p_{y_2} q_4(\pm\theta_0, y_3)=0,&y_3\in [-L,L],\\
		\p_{y_3} q_4(y_2, \pm L)=0,& y_2\in [-\theta_0,\theta_0].
	\end{cases}\ee
	\par {\bf Step 5. Solving the deformation-curl system for the velocity field.}
	\par In  {\bf Step 4}, we derived the deformation-curl system for the velocity field, which consists of equations \eqref{vor504}-\eqref{Wor506}, \eqref{rho2-1-ell} and boundary conditions \eqref{shock19}-\eqref{bc17}, \eqref{slip16}, \eqref{pres4}, with $q_i^{\pm}$ $(i = 2,3)$ and $q_i$ $(i = 1, 4)$ taking values at $(\hat{\mathbf{W}}, \hat{W}_7)$. However, due to the linearization,  $(G_2, G_3, G_4)$ generally does not satisfy the divergence-free condition, the solvability condition for the system \eqref{vor504}-\eqref{Wor506} is generally invalid. Therefore, we first consider the following enlarged deformation-curl system by introducing an additional unknown function $\Pi$ that satisfies the homogeneous Dirichlet boundary condition:
	\begin{equation}\label{defor-curl-Pi}
		\begin{cases}
			d_1(y_1)\p_{y_1} W_1 + \frac1{y_1}\p_{y_2} W_2 +   \p_{y_3} W_3+\frac{ W_1}{y_1}+d_2(y_1)W_1
			=
			G_0(W_5,W_6,\h{\bf W},\h W_7),  &\text{in } \mn, \\
			\frac{1}{y_1}\p_{y_2} W_3-\p_{y_3} W_2 + \partial_{y_1} \Pi
			=G_2(\hat{{\bf W}},\hat{W}_7),  &\text{in } \mn, \\
			\p_{y_3} W_1- \p_{y_1} W_3 + d_5 (y_1) \p_{y_3} W_1(r_b,y') + \frac{1}{y_1} \partial_{y_2} \Pi
			=G_3(W_5,\hat{{\bf W}},\hat{W}_7),  &\text{in } \mn,\\
			\p_{y_1} W_2+\frac{W_2}{y_1}-\frac{1}{y_1} \p_{y_2} W_1 -\frac{1}{y_1} d_5 (y_1) \p_{y_2}W_1(r_b,y') + \partial_{y_3} \Pi
			=G_4(W_5,\hat{{\bf W}},\hat{W}_7),  &\text{in } \mn,\\
			\frac{1}{r_b^2}\p_{y_2}^2 W_1 (r_b,y')+\p_{y_3}^2 W_1 (r_b,y') \\
			= b_0b_1\left(\frac{1}{r_b}\p_{y_2} W_2+\p_{y_3} W_3\right)(r_b,y')+ q_1(\hat{{\bf W}}(r_b,y'), \hat{W}_7 (y')),  &y' \in D, \\
			W_2 (y_1, \pm \theta_0, y_3) = \Pi (y_1, \pm \theta_0, y_3) = 0,  &(y_1, y_3) \in \Sigma_{\th_0}^\pm, \\
			W_3 (y_1, y_2, \pm L ) = \Pi (y_1, y_2, \pm L ) = 0,  &(y_1, y_2) \in \Sigma_{L}^\pm, \\
			W_1 (r_1, y') + d_5 (r_1) W_1 (r_b, y') = q_4 (y'), &y' \in D, \\
			\left(\frac1{r_b}\p_{y_2} W_1 - b_0 b_1 W_2\right) (r_b, \pm \th_0, y_3 ) = 0, &y_3 \in [-L,L], \\
			(\p_{y_3} W_1 - b_0 b_1 W_3) (r_b,  y_2, \pm L) = 0,   &y_2 \in [-\th_0,\th_0], \\
			\partial_{y_1}\Pi (r_b, y') = \Pi (r_1, y') = 0,  &y' \in D.
		\end{cases}
	\end{equation}
	
	{\bf Step 5.1.} First, taking the divergence operator to the second, third, and fourth equations of system \eqref{defor-curl-Pi}, and combining with the boundary conditions of $\Pi$ lead to
	\begin{equation}\label{Pi-1}
		\begin{cases}
			\p_{y_1 }^2 \Pi + \frac{1}{y_1} \p_{y_1} \Pi + \frac{1}{y_1^2} \p_{y_2}^2 \Pi + \p_{y_3}^2 \Pi \\
			\quad = \p_{y_1} G_2 + \frac{1}{y_1} G_2 + \frac{1}{y_1} \p_{y_2} G_3 + \p_{y_3} G_4,\quad & \mbox{in } \mn, \\
			\Pi (y_1, \pm \theta_0, y_3) = 0, &(y_1, y_3) \in \Sigma_{\th_0}^\pm, \\
			\Pi (y_1, y_2, \pm L ) = 0, &(y_1, y_2) \in \Sigma_{L}^\pm, \\
			\partial_{y_1} \Pi (r_b, y') = \Pi (r_1, y') = 0, & y' \in D.
		\end{cases}
	\end{equation}
	By applying the standard elliptic theory in \cite{GT98}, there exists a unique solution $\Pi \in C^{2, \alpha}(\mn) \cap C^{1, \alpha} (\overline{\mn})$ to the elliptic system \eqref{Pi-1}. To deal with the singularity near the corner, we extend $\Pi$ and $G_i$ $(i = 2, 3, 4)$  as follows:
	\begin{equation}\label{extension-1}
		\left(\tilde{\Pi}, \tilde{G}_2, \tilde{G}_3,\tilde G_4 \right) (y_1, y')  =\begin{cases}
			\left(-\Pi, -G_2, -G_3,G_4 \right) (y_1, y_2, - y_3 - 2L), &  y'  \in  [- \theta_0, \theta_0] \times [-3L, -L], \\
			\left(\Pi, G_2, G_3,G_4 \right) (y_1, y_2, y_3), &  y'  \in  [- \theta_0, \theta_0] \times [-L, L], \\
			\left(-\Pi, -G_2, -G_3,G_4 \right) (y_1, y_2, - y_3 + 2L), &  y' \in   [- \theta_0, \theta_0] \times [L, 3L].
		\end{cases}
	\end{equation}
		The similar extension procedure is performed for $\Pi$ and $G_i \ (i = 2, 3, 4)$ along the $x_2$-direction. Combining the compatibility condition \eqref{cp20}, one has
		\begin{equation}\label{Pi-2}
			\begin{cases}
				\p_{y_1}^2 \tilde{\Pi} + \frac{1}{y_1} \p_{y_1} \tilde{\Pi} + \frac{1}{y_1^2} \p_{y_2}^2 \tilde{\Pi} + \p_{y_3}^2 \tilde{\Pi}
				= \p_{y_1} \tilde{G}_2 + \frac{1}{y_1} \tilde{G}_2 + \frac{1}{y_1} \p_{y_2} \tilde{G}_3 + \p_{y_3} \tilde{G}_4 \in C^{0, \alpha} (\overline{\mn_e}),  \\
				\tilde{\Pi} (r_b, y_2, y_3) = \tilde{\Pi} (r_1, y_2, y_3) = 0, \quad (y_2, y_3) \in [- 3 \theta_0, 3 \theta_0] \times [-3L, 3L],
			\end{cases}
		\end{equation}
		where $\mn_e = (r_b, r_1) \times (- 3 \theta_0, 3 \theta_0) \times (-3L, 3L)$. Then the regularity of $\Pi$ on $\overline\mn$ can be improved  to $C^{2, \alpha} (\overline{\mn})$ with the estimate
		\begin{equation}\label{Pi-est}
			\| \Pi \|_{C^{2, \alpha} (\overline{\mn})} \leq C_* \sum_{i = 2}^4 \| G_i \|_{C^{1, \alpha} (\overline{\mn})} \leq C_* \left(\sigma \| (\hat{\mathbf{W}}, \hat{W}_7) \|_{\mathfrak{W}}  + \| (\hat{\mathbf{W}}, \hat{W}_7) \|_{\mathfrak{W}}^2 \right) \leq C_* (\sigma \delta_0 + \delta_0^2).
		\end{equation}
		Furthermore, the following compatibility conditions hold:
		\begin{equation}\label{Pi-cp}
			\begin{cases}
				\p_{y_1} \Pi (y_1, \pm \theta_0, y_3) = \p_{y_3} \Pi (y_1, \pm \theta_0, y_3) = 0, &  (y_1,  y_3)\in\Sigma_{\th_0}^\pm, \\
				\p_{y_1} \Pi (y_1, y_2, \pm L) = \p_{y_2} \Pi (y_1, y_2, \pm L) = 0, &  (y_1,  y_2)\in \Sigma_{L}^\pm .
			\end{cases}
		\end{equation}
		
		{\bf Step 5.2.} We now turn to solving the following divergence-curl system with homogeneous normal boundary conditions:
		\begin{equation}\label{div-curl}
			\begin{cases}
				\p_{y_1} \tilde{W}_1 + \frac{1}{y_1} \tilde{W}_1 + \frac{1}{y_1}  \p_{y_2} \tilde{W}_2 + \p_{y_3} \tilde{W}_3 = 0, & \mbox{in } \mn, \\
				\frac{1}{y_1} \p_{y_2} \tilde{W}_3 - \p_{y_3} \tilde{W}_2 = G_2 (\hat{\mathbf{W}}, \hat{W}_7) - \p_{y_1} \Pi \coloneqq \bar{G}_2, & \mbox{in } \mn, \\
				\p_{y_3} \tilde{W}_1 - \p_{y_1} \tilde{W}_3 = G_3 (\hat{\mathbf{W}}, \hat{W}_7) - \frac{1}{y_1} \p_{y_2} \Pi \coloneqq \bar{G}_3 , & \mbox{in } \mn, \\
				\p_{y_1} \tilde{W}_2 + \frac{1}{y_1} \tilde{W}_2 - \frac{1}{y_1} \p_{y_2} \tilde{W}_1 = G_4 (\hat{\mathbf{W}}, \hat{W}_7) - \p_{y_3} \Pi \coloneqq \bar{G}_4, & \mbox{in } \mn, \\
				\tilde{W}_1 (r_b, y_2, y_3) = \tilde{W}_1 (r_1, y_2, y_3) = 0, &  y'\in D, \\
				\tilde{W}_2 (y_1, \pm \theta_0, y_3) = 0, & (y_1,  y_3) \in\Sigma_{\theta_0}^{\pm}, \\
				\tilde{W}_3 (y_1, y_2, \pm L) = 0, &  (y_1,  y_2) \in \Sigma_L^{\pm}.
			\end{cases}
		\end{equation}
		It follows from \eqref{Pi-1} that
		\begin{equation*}
			\p_{y_1} \bar{G}_2 + \frac{1}{y_1} \bar{G}_2 + \frac{1}{y_1} \p_{y_2} \bar{G}_3 + \p_{y_3} \bar{G}_4 \equiv 0, \quad \text{in } \mn.
		\end{equation*}
		Moreover, \eqref{cp20} and \eqref{Pi-2} imply that
		\begin{equation}\label{G-cp}
			\begin{cases}
				\left(\bar{G}_2(\h{\bf W}, \h W_7), \bar{G}_4(\h{\bf W}, \h W_7),\p_{y_2} \bar{G}_3(\h{\bf W}, \h W_7)\right)(y_1,\pm \th_0, y_3)=0,&(y_1,  y_3) \in \Sigma_{\theta_0}^{\pm},\\
				\left(\bar{G}_2(\h{\bf W}, \h W_7), \bar{G}_3(\h{\bf W}, \h W_7),\p_{y_3} \bar{G}_4(\h{\bf W}, \h W_7)\right)(y_1,y_2, \pm L)=0,& (y_1,  y_2) \in \Sigma_L^{\pm}.
			\end{cases}
		\end{equation}
		By applying the standard theory in \cite{KY09}, the divergence-curl system with the homogeneous boundary conditions is uniquely solvable. Thanks to the compatibility condition \eqref{G-cp}, one can use the  symmetric extension technique to show that there exists a unique $C^{2, \alpha} (\overline{\mn})$ smooth vector field $\left(\tilde{W}_1, \tilde{W}_2, \tilde{W}_3 \right)$  to the problem \eqref{div-curl} satisfying the estimate
		\begin{align} \nonumber
			& \sum_{i = 1}^3 \| \tilde{W}_i \|_{C^{2, \alpha} (\overline{\mn})} \leq C_* \sum_{i = 2}^4 \| \bar{G}_i \|_{C^{1, \alpha} (\overline{\mn})} \leq C_* \left( \sum_{i = 2}^4 \| G_i \|_{C^{1, \alpha} (\overline{\mn})} + \| \Pi \|_{C^{2, \alpha} (\overline{\mn})} \right) \\ \label{W-est-1}
			& \leq C_*  \sum_{i = 2}^4 \| G_i \|_{C^{1, \alpha} (\overline{\mn})} \leq  C_* \left(\sigma \| (\hat{\mathbf{W}}, \hat{W}_7) \|_{\mathfrak{W}}  + \| (\hat{\mathbf{W}}, \hat{W}_7) \|_{\mathfrak{W}}^2 \right) \leq C_* (\sigma \delta_0 + \delta_0^2),
		\end{align}
		and the compatibility conditions
		\begin{equation}\label{W-cp}
			\begin{cases}
				\left( \p_{y_2} \tilde{W}_1, \p_{y_2} \tilde{W}_3, \tilde{W}_2, \p_{y_2 y_2}^2 \tilde{W}_2 \right) (y_1, \pm \theta_0, y_3) = 0, & (y_1,  y_3) \in \Sigma_{\theta_0}^{\pm}, \\
				\left( \p_{y_3} \tilde{W}_1, \p_{y_3} \tilde{W}_2, \tilde{W}_3, \p_{y_3 y_3}^2 \tilde{W}_3 \right) (y_1, y_2,  \pm L) = 0, & (y_1,  y_2) \in\Sigma_L^{\pm}.
			\end{cases}
		\end{equation}
		
		{\bf Step 5.3.} Let $(W_1, W_2, W_3)$ be the solution to \eqref{defor-curl-Pi}. Set
		\begin{equation*}
			M_i (y_1,y') = W_i (y_1,y') - \tilde{W}_i (y_1,y'), \quad i =1,2,3.
		\end{equation*}
		Then $M_i \ (i = 1,2,3)$ satisfy
		\begin{equation}\label{M}
			\begin{cases}
				d_1 (y_1) \p_{y_1} M_1 + \frac{1}{y_1} \p_{y_2} M_2 + \p_{y_3} M_3 + \frac{1}{y_1} M_1 + d_2 (y_1) M_1 \\
				= G_5(W_5, W_6, \hat{\mathbf{W}}, \hat{W}_7), & \mbox{in } \mn, \\
				\frac{1}{y_1} \p_{y_2} M_3 - \p_{y_3} M_2 = 0, & \mbox{in } \mn, \\
				\p_{y_3} M_1 - \p_{y_1} M_3 + d_5 (y_1) \p_{y_3} M_1 (r_b, y') = 0, & \mbox{in } \mn, \\
				\p_{y_1} M_2 - \frac{1}{y_1} \p_{y_2} M_1 + \frac{1}{y_1} M_2 - \frac{1}{y_1} d_5 (y_1) \p_{y_2} M_1 (r_b, y') = 0, &  \mbox{in } \mn, \\
				\frac{1}{r_b^2} \p_{y_2 }^2 M_1 (r_b, y') + \p_{y_3 }^2 M_1 (r_b, y') \\
				- b_0 b_1 \left(\frac{1}{r_b} \p_{y_2} M_2 + \p_{y_3} M_3 \right) (r_b, y') = q_5 (y'), & y' \in D, \\
				M_2 (y_1, \pm \theta_0, y_3) = 0, &(y_1,  y_3) \in \Sigma_{\theta_0}^{\pm}, \\
				M_3 (y_1, y_2, \pm L) = 0, & (y_1,  y_2) \in \Sigma_L^{\pm}, \\
				M_1 (r_1, y') + d_5 (r_1) M_1 (r_b, y') = q_4 (y'), & y' \in D, \\
				\left(\frac{1}{r_b} \p_{y_2} M_1 - b_0 b_1 M_2 \right) (r_b, \pm \theta_0, y_3) = 0, & y_3 \in [-L, L], \\
				\left( \p_{y_3} M_1 - b_0 b_1 M_3 \right) (r_b, y_2, \pm L) = 0, & y_2 \in [- \theta_0, \theta_0],
			\end{cases}
		\end{equation}
		where
		\begin{align*}
			& G_5(W_5, W_6, \hat{\mathbf{W}}, \hat{W}_7) = G_0(W_5, W_6, \hat{\mathbf{W}}, \hat{W}_7) + \bar{M}^2 (y_1) \p_{y_1} \tilde{W}_1 - d_2 (y_1) \tilde{W}_1, \\
			& q_5 (y') = q_1 (\hat{\mathbf{W}}(r_b, y'), \hat{W}_7 (y')) + b_0 b_1 \left( \frac{1}{r_b} \p_{y_2} \tilde{W}_2 + \p_{y_3} \tilde{W}_3 \right) (r_b, y').
		\end{align*}
		It follows from the second, third and fourth equations in \eqref{M} that there exists a potential function $\varphi$ such that
		\begin{equation}\label{phi}\begin{cases}
				M_1  + d_5 (y_1) M_1 (r_b, y') = \p_{y_1} \varphi (y_1, y'), \\ M_2 (y_1, y') = \frac{1}{y_1} \p_{y_2} \varphi (y_1, y'), \\ M_3 (y_1, y') = \p_{y_3} \varphi (y_1, y').
			\end{cases}
		\end{equation}
		Thus one gets
		\begin{align*}
			M_1 (r_b, y') =  \frac{1}{b_3} \p_{y_1} \varphi (r_b, y'), \ \
			M_1 (y_1, y') =   \p_{y_1} \varphi (y_1, y') - \frac{d_5(y_1)}{b_3} \p_{y_1} \varphi (r_b, y'),
		\end{align*}
		where $$b_3 = \frac{1+ (\gamma - 1) \bar{M}^2 (r_b)}{\gamma \bar{M}^2 (r_b)} > 0.$$
		Therefore, \eqref{M} can be reformulated as
		\begin{equation}\label{phi-1}
			\begin{cases}
				d_1 (y_1) \p_{y_1}^2 \varphi + \frac{1}{y_1^2} \p_{y_2}^2 \varphi + \p_{y_3}^2 \varphi + \frac{1}{y_1} \p_{y_1} \varphi + d_2 (y_1) \p_{y_1} \varphi \\
				\quad - \frac{1}{b_3} d_6 (y_1) \p_{y_1} \varphi (r_b, y') = G_5 (W_5, W_6, \hat{\mathbf{W}}, \hat{W}_7), & \mbox{in } \mn, \\
				(\frac{1}{r_b^2}  \p_{ y_2}^2  + \p_{y_3}^2  )(\p_{y_1} \varphi (r_b, y') - b_4 (r_b) \varphi (r_b, y')) = b_3q_5 (y'), & y'\in D , \\
				\p_{y_1} \varphi (r_1, y') = q_4 (y'), & y'\in D, \\
				\p_{y_2} \varphi (y_1, \pm \theta_0, y_3) = 0, &  (y_1,  y_3)\in\Sigma_{\theta_0}^{\pm}, \\
				\p_{y_3} \varphi (y_1, y_2, \pm L) = 0, & (y_1,  y_2)\in \Sigma_L^{\pm}, \\
				\p_{y_2} \left( \p_{y_1} \varphi - b_4  \varphi \right) (r_b, \pm \theta_0, y_3) = 0, & y_3 \in [-L, L], \\
				\p_{y_3} \left( \p_{y_1} \varphi - b_4 \varphi \right) (r_b, y_2, \pm L) = 0, & y_2 \in [- \theta_0, \theta_0],
			\end{cases}
		\end{equation}
		where
		\begin{align*}
			d_6 (y_1) & =  \left( \frac{1}{y_1}  + d_2 (y_1) \right) d_5 (y_1) + d_1 (y_1) d_5' (y_1) \\
			& = \left( \frac{1}{y_1}  + d_2 (y_1) \right) d_5 (y_1) - d_1 (y_1) \frac{b_2}{ b_1} \frac{\bar{B} + \frac{1}{2} \bar{U}^2 (y_1)}{\gamma} \frac{\bar{U}' (y_1)}{\bar{U}^2 (y_1)} \\
			& = \frac{b_2}{y_1 \gamma b_1 \bar{U} (y_1)} \left( 2 \bar{B} + \frac{\bar{U}^2 (y_1) \left(2+(\gamma-1) \bar{M}^2 (y_1) \right)}{ (\gamma-1) (1- \bar{M}^2 (y_1)) } \right) > 0, \\
			b_4 & = b_0 b_1 b_3 > 0.
		\end{align*}
		By the second equation with the last two boundary conditions  in \eqref{phi-1},
		an oblique boundary condition for the potential $\varphi$ on the boundary $\{(r_b, y') \colon y' \in D\}$ is derived by solving the Poisson equation with the Neumann boundary conditions.
		\begin{lemma}
			One the shock front $\{ (r_b, y') : y' \in D \}$, there exists a unique $C^{2, \al } (\overline{D})$ function $h_1 (y')$ such that
			\begin{equation}\no
				\p_{y_1} \varphi (r_b, y') - b_4 \varphi (r_b, y') = h_1 (y'),
			\end{equation}
			where $h_1 (y')$ satisfies the Poisson equation with the homogeneous Neumann boundary conditions
			\begin{equation*}
				\begin{cases}
					\bigg(\frac1{r_b^2}\p_{y_2}^2 + \p_{y_3}^2\bigg) h_1 (y') = b_3 q_5 (y') , & \mbox{in } D, \\
					\p_{y_2} h_1 (\pm \theta_0, y_3) = 0, &  y_3 \in [-L, L], \\
					\p_{y_3} h_1 (y_2, \pm L) = 0, &  y_2 \in [-\theta_0, \theta_0],
				\end{cases}
			\end{equation*}
			and the condition
			\begin{equation}\no
				\iint_D h_1 (y') d y' = 0.
			\end{equation}
		\end{lemma}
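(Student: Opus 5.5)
The plan is to treat $h(y') := \p_{y_1}\varphi(r_b,y') - b_4\varphi(r_b,y')$ as a function on $D$ and read off its boundary value problem from \eqref{phi-1}. The second equation in \eqref{phi-1} says exactly that $\big(\frac{1}{r_b^2}\p_{y_2}^2+\p_{y_3}^2\big)h = b_3 q_5$ in $D$. The last two boundary conditions of \eqref{phi-1} say $\p_{y_2}h(\pm\th_0,y_3)=0$ and $\p_{y_3}h(y_2,\pm L)=0$. So $h$ solves a Neumann problem for a constant-coefficient Laplace-type operator on the rectangle $D$.

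The key steps, in order, are as follows.

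First, I check the solvability (compatibility) condition $\iint_D q_5\,dy'=0$. By the divergence theorem, the integral of the operator applied to $h$ over $D$ equals the boundary flux, which vanishes under the Neumann conditions. Hence existence of a solution forces $\iint_D q_5=0$, and conversely this condition is needed to construct $h_1$. To verify it independently of $\varphi$, I return to the origin of $q_5$. Up to the correction term $b_0b_1\big(\frac1{r_b}\p_{y_2}\tilde W_2+\p_{y_3}\tilde W_3\big)(r_b,\cdot)$, the datum $q_5$ is $q_1$. Now $q_1$ is a sum of tangential divergences: $b_1\big(\frac1{r_b}\p_{y_2}g_2+\p_{y_3}g_3\big)$ plus $\big(\frac1{r_b^2}\p_{y_2}^2+\p_{y_3}^2\big)R_1$, all evaluated at $(\hat{\bf W},\hat W_7)$. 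Integrating over $D$ reduces everything to boundary terms. The boundary terms of $q_1$ are $g_2$ on $y_2=\pm\th_0$, $g_3$ on $y_3=\pm L$, and the normal derivatives of $R_1$ on $\partial D$. These vanish by \eqref{j211} and \eqref{rv41}, since $\mathcal L_2=0$ at $y_2=\pm\th_0$ forces $g_2=0$ there (using $\hat W_2=0$), and similarly for $g_3$. The $\tilde W$ term integrates to boundary terms in $\tilde W_2$ on $y_2=\pm\th_0$ and $\tilde W_3$ on $y_3=\pm L$, which vanish by \eqref{W-cp}. Hence $\iint_D q_5=0$.

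Next, I solve the Neumann problem. Rescaling $y_2$ by $r_b$ turns the operator into the standard Laplacian on a rectangle. Standard Neumann theory then gives a solution unique up to an additive constant, and I fix the constant by requiring $\iint_D h_1=0$. For $C^{2,\alpha}$ regularity up to the corners, I reflect evenly across the sides of $D$, in the spirit of \eqref{extension-1}. This needs $q_5\in C^{0,\alpha}$ with its even extension remaining Hölder, which is automatic for an even reflection. If one wants higher regularity, one uses $\p_{y_2}q_5=0$ on $y_2=\pm\th_0$ and $\p_{y_3}q_5=0$ on $y_3=\pm L$, which follow from \eqref{rv41}, \eqref{j211} and \eqref{W-cp}. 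Schauder estimates on the periodized problem then give $h_1\in C^{2,\alpha}(\overline D)$ together with the Neumann conditions.

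Finally, I address uniqueness and identification. Any $\varphi$ solving \eqref{phi-1} has $h$ solving the same Neumann problem, so $h-h_1$ is a constant $c$. The potential $\varphi$ in \eqref{phi} is itself determined only up to an additive constant. Since $b_4\neq0$, replacing $\varphi$ by $\varphi+c/b_4$ absorbs $c$, so the normalization gives exactly $\p_{y_1}\varphi-b_4\varphi=h_1$ with $h_1$ unique. The main obstacle is the compatibility identity $\iint_D q_5=0$, which requires tracking the precise divergence structure of $q_1$ and the boundary vanishing of $g_2$, $g_3$, $\p_\nu R_1$ and the $\tilde W$ traces.
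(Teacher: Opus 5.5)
Your proposal is correct and follows the same route as the paper. The paper reads the Neumann problem for $\partial_{y_1}\varphi(r_b,\cdot)-b_4\varphi(r_b,\cdot)$ off the second equation and the last two boundary conditions of \eqref{phi-1}, and normalizes by zero mean; the normalization is possible because $\varphi$ is only determined up to an additive constant. The paper's justification is a single sentence, and you supply details it leaves implicit: the solvability condition $\iint_D q_5\,dy'=0$, obtained from the divergence structure of $q_1$ together with the vanishing of $g_2$, $g_3$, $\partial_\nu R_1$ and the traces of $\tilde W_2$, $\tilde W_3$ on $\partial D$, and the even-reflection argument for $C^{2,\alpha}$ regularity up to the corners.
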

		Then the problem \eqref{phi-1} can be reduced to the following second-order elliptic equation with a nonlocal term:
		\begin{equation}\label{pf13}
			\begin{cases}
				\partial_{y_1} \left(d_1 (y_1) \partial_{y_1} \varphi\right) + \frac{1}{y_1^2} \partial_{y_2}^2 \varphi + \partial_{y_3}^2 \varphi + d_7 (y_1) \partial_{y_1} \varphi \\
				\quad - b_0 b_1 d_6 (y_1) \varphi (r_b, y') = G_6 (W_5, W_6, \hat{\mathbf{W}}, \hat{W}_7), & {\rm{in}} \ \ \mn, \\
				\partial_{y_1} \varphi (r_b, y') - b_4 \varphi (r_b, y') = h_1 (y'), &y'\in D, \\
				\partial_{y_2} \varphi (y_1, \pm \theta_0, y_3) = 0, &(y_1, y_3)\in \Sigma_{\theta_0}^{\pm}, \\
				\partial_{y_3} \varphi (y_1, y_2, \pm L) = 0, & (y_1, y_2)\in \Sigma_L^{\pm}, \\
				\partial_{y_1} \varphi (r_1, y') = q_4 (y'), & y'\in D,
			\end{cases}
		\end{equation}
		where
		\begin{equation*}
			d_7 (y_1) = \frac{1}{y_1} + d_2 (y_1) - d_1' (y_1), \quad
			G_6 (W_5, W_6, \hat{\mathbf{W}}, \hat{W}_7) = G_5 (W_5, W_6, \hat{\mathbf{W}}, \hat{W}_7) + \frac{1}{b_3} d_6 (y_1) h_1 (y').
		\end{equation*}
		Simple calculations yield that
		\begin{equation}\label{bc43}
			\begin{cases}
				\partial_{y_2} G_6 (y_1, \pm \theta_0, y_3) = 0, & (y_1, y_3)\in \Sigma_{\theta_0}^{\pm}, \\
				\partial_{y_3} G_6 (y_1, y_2, \pm L) = 0, & (y_1, y_2)\in \Sigma_L^{\pm}, \\
				\partial_{y_2} h_1 (\pm \theta_0, y_3)= \partial_{y_2} q_4 (\pm \theta_0, y_3) = 0, &  y_3 \in [-L , L],  \\
				\partial_{y_3} h_1 (y_2, \pm L) = \partial_{y_3} q_4 (y_2, \pm L) = 0, &  y_2 \in [-\theta_0 , \theta_0] .
			\end{cases}
		\end{equation}
		Note that $d_1 (y_1) > 0 $ for any $y_1 \in [r_b, r_1]$. Then the  problem \eqref{pf13} can be solved by adapting the arguments in \cite[Proposition 3.4]{WX23}.
		\begin{proposition}
			Assume $G_6 \in C^{1, \alpha} (\overline{\mn})$ and $(h_1, q_4) \in \left(C^{2, \alpha} (\overline{D})\right)^2$ satisfies \eqref{bc43}. Then problem \eqref{pf13} admits a unique solution $\varphi \in C^{3, \alpha} (\overline{\mn})$ with the estimate
			\begin{equation}\label{est9}
				\| \varphi \|_{C^{3, \alpha} (\overline{\mn})} \le C_* \left( \| G_6 \|_{C^{1, \alpha} (\overline{\mn})} + \| h_1 \|_{C^{2, \alpha} (\overline{D})} + \| q_4 \|_{C^{2, \alpha} (\overline{D})} \right),
			\end{equation}
			where $C_*$ is a constant depending only on the background solution.
		\end{proposition}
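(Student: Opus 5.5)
The plan is to treat \eqref{pf13} as a linear elliptic problem with a compact nonlocal perturbation and use the Fredholm alternative. Uniqueness will come from an energy argument, and existence and the estimate \eqref{est9} from Schauder theory together with reflection across the walls. The operator $\partial_{y_1}(d_1\partial_{y_1}\cdot)+y_1^{-2}\partial_{y_2}^2+\partial_{y_3}^2+d_7\partial_{y_1}$ is uniformly elliptic because $d_1>0$ on $[r_b,r_1]$. The boundary conditions are Neumann on the walls and at the exit, and oblique (Robin type) at $y_1=r_b$. The only nonstandard term is $-b_0b_1d_6(y_1)\varphi(r_b,y')$, a trace term independent of $y_1$. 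We write the problem as $\mathcal{L}\varphi - K\varphi = G_6$, where $K\varphi := b_0b_1 d_6(y_1)\varphi(r_b,y')$.

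For the local problem $\mathcal{L}\varphi=f$ with the same boundary conditions, the Robin condition $\partial_{y_1}\varphi - b_4\varphi = h_1$ has the favorable sign for the inner normal $-\partial_{y_1}$ with $b_4>0$. Hence the maximum principle and Hopf's lemma give uniqueness. Existence follows from \cite{GT98} on the extended domain: we reflect $\varphi$, $G_6$, $h_1$, $q_4$ evenly across $y_2=\pm\theta_0$ and $y_3=\pm L$, as in \eqref{extension-1}. The compatibility conditions \eqref{bc43} make the extended data $C^{1,\alpha}$ and $C^{2,\alpha}$ respectively, so the edges and corners disappear. Standard Schauder estimates for oblique problems in a slab with periodic-like lateral data then give $C^{3,\alpha}$ regularity and the estimate. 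Since $K$ maps $C^{2,\alpha}$ into $C^{2,\alpha}$ through the trace (a smoothing gain at one level after compact embedding), the operator $\varphi\mapsto \mathcal{L}^{-1}K\varphi$ is compact on $C^{2,\alpha}(\overline{\mn})$. Therefore the Fredholm alternative reduces everything to uniqueness for the full nonlocal problem.

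Uniqueness is the main obstacle, since the nonlocal term destroys the maximum principle. Following \cite[Proposition 3.4]{WX23}, I would expand in the Neumann eigenfunctions of $-(r_b^{-2}\partial_{y_2}^2+\partial_{y_3}^2)$ on $D$, namely products of cosines, $\varphi=\sum_k\varphi_k(y_1)e_k(y')$. Note that $y_1^{-2}$ rather than $r_b^{-2}$ multiplies $\partial_{y_2}^2$, so I would instead use the separable basis $\cos(m\pi(y_2+\theta_0)/(2\theta_0))\cos(n\pi(y_3+L)/(2L))$, which diagonalizes both $\partial_{y_2}^2$ and $\partial_{y_3}^2$ independently. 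Each mode satisfies a second-order ODE on $[r_b,r_1]$:
\[
(d_1\varphi_k')'+d_7\varphi_k'-\lambda_k(y_1)\varphi_k-b_0b_1d_6(y_1)\varphi_k(r_b)=0,
\]
with boundary conditions $\varphi_k'(r_b)=b_4\varphi_k(r_b)$ and $\varphi_k'(r_1)=0$, where $\lambda_k(y_1)\ge 0$. By linearity we normalize $\varphi_k(r_b)=1$ (the case $\varphi_k(r_b)=0$ gives $\varphi_k\equiv0$ by ODE uniqueness). Then $\varphi_k'(r_b)=b_4>0$, and we must show that $\varphi_k'(r_1)=0$ is impossible. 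The sign information $d_6>0$ and $b_4>0$ enters here. Writing $\psi=\varphi_k-1$, we have $\psi(r_b)=0$ and $\psi'(r_b)>0$, with
\[
(d_1\psi')'+d_7\psi'-\lambda_k\psi=\lambda_k+b_0b_1d_6>0.
\]
A comparison or maximum-principle argument then shows that $\psi$ stays positive and increasing, so $\psi'>0$ up to $r_1$. To make this rigorous I would multiply by the integrating factor $e^{\int d_7/d_1}$ and show that $p:=e^{\int d_7/d_1}d_1\psi'$ is increasing whenever $\psi\ge0$, which closes a continuity argument. This contradicts $\psi'(r_1)=0$, so all modes vanish, and since the expansion converges in $L^2$, $\varphi\equiv0$.

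With uniqueness in hand, Fredholm theory yields existence. The estimate \eqref{est9} follows by the standard contradiction-compactness argument: the Schauder estimate for $\mathcal{L}$ bounds $\|\varphi\|_{C^{3,\alpha}}$ by the data plus $\|\varphi\|_{C^0}$, and the $C^0$ term is absorbed using uniqueness.
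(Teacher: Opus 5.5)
Your proof is correct and follows essentially the same route as the paper, which only observes $d_1>0$ and defers to \cite[Proposition 3.4]{WX23}. Your mode-by-mode argument with $p=e^{\int d_7/d_1}d_1\psi'$ (using $b_4>0$, $d_6>0$, $\lambda_k\ge 0$), combined with even reflection, Schauder theory and the Fredholm alternative, supplies the details left to that reference. One cosmetic slip: at $y_1=r_b$ the direction $-\partial_{y_1}$ is the outward, not inward, normal, though your conclusion that the Robin condition has the favorable sign is right.
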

		
		Thus $$M_1 (y_1,y') = \p_{y_1} \varphi(y_1,y') - \f {d_5 (y_1)}{b_3} \p_{y_1} \varphi (r_b, y'),\ \ M_2 (y_1,y') = \frac{1}{y_1} \p_{y_2} \varphi (y_1,y'), \ \ M_3 (y_1,y') = \p_{y_3} \varphi (y_1,y') $$ satisfy  \eqref{M}. Differentiating the first equation in \eqref{M} with respect to $y_2$ (resp. $y_3$) and evaluating at $y_2 = \pm \theta_0$ (resp. $y_3 = \pm L$), one gets
		\begin{equation}\no
			\begin{cases}
				\p_{y_2}^2 M_2 (y_1, \pm \theta_0, y_3) = 0, & (y_1, y_3)\in \Sigma_{\theta_0}^{\pm}, \\
				\p_{y_3}^2 M_3 (y_1, y_2, \pm L) = 0, & (y_1, y_2)\in  \Sigma_L^{\pm}.
			\end{cases}
		\end{equation}
		Hence \begin{equation}\begin{cases}\label{pf32}
				W_1 (y_1,y') = \tilde{W}_1 (y_1,y') + \p_{y_1} \varphi(y_1,y') - \f {d_5 (y_1)}{b_3} \p_{y_1} \varphi (r_b, y'), \\
				W_2 (y_1,y') = \tilde{W}_2 (y_1,y') + \frac{1}{y_1} \p_{y_2} \varphi (y_1,y'), \\
				W_3 (y_1,y') = \tilde{W}_3 (y_1,y') + \p_{y_3} \varphi (y_1,y'), \end{cases}
		\end{equation}
		solve the problem \eqref{defor-curl-Pi}, which satisfy the following estimate
		\begin{equation}\label{W123-est}
			\begin{split}
				\sum_{i = 1}^3 \| W_i \|_{C^{2, \alpha} (\overline{\mn})} & \leq C_* \left(  \sum_{i = 1}^3 \| \tilde{W}_i \|_{C^{2, \alpha} (\overline{\mn})} + \| \nabla \varphi \|_{C^{2, \alpha} (\overline{\mn})}  + \| \partial_{y_1} \varphi (r_b, \cdot) \|_{C^{2, \alpha} (\overline{D})}  \right) \\
				& \leq C_* \left( \sigma + \sigma \| ({\bf \hat{W}}, \hat{W}_7) \|_{\mathfrak{W}} + \| ({\bf \hat{W}}, \hat{W}_7) \|_{\mathfrak{W}}^2 \right) \leq C_* \left(\sigma + \delta_0^2 \right),
			\end{split}
		\end{equation}
		and the  compatibility  conditions
		\begin{equation}\label{W123-cp}
			\begin{cases}
				\left(W_2, \p_{y_2 }^2 W_2, \p_{y_2} W_1, \p_{y_2} W_3 \right) (y_1, \pm \theta_0, y_3) = 0,  & (y_1, y_3)\in \Sigma_{\theta_0}^{\pm}, \\
				\left(W_3, \p_{y_3 }^2 W_3, \p_{y_3} W_1, \p_{y_3} W_2 \right) (y_1, y_2, \pm L) = 0, & (y_1, y_2)\in \Sigma_L^{\pm}.
			\end{cases}
		\end{equation}

		\par {\bf Step 6. The  unique determination of   the entropy and the shock front.} \par According to \eqref{6-11},
		we can infer that the function $W_4$ can be uniquely determined by \eqref{pf32}. Furthermore, $W_4$ satisfies the following estimate
		\begin{equation}\label{W4-est}
			\| W_4 \|_{C^{2, \al} (\overline{\mn})} \le C_* \| W_1 (r_b, \cdot ) \|_{C^{2, \al } (\overline{D})} + C_* \bigg(\sigma \| ({\bf \hat{W}}, \hat{W}_7) \|_{\mathfrak{W}} + \| ({\bf \hat{W}}, \hat{W}_7) \|_{\mathfrak{W}}^2 \bigg) \le C_* (\sigma + \delta_0^2),
		\end{equation}
		and the compatibility conditions
		\be\label{bc46}
		\begin{cases}
			\p_{y_2} W_4 (y_1, \pm \theta_0, y_3) =  \p_{y_2} W_1 (r_b, \pm \theta_0, y_3) = 0, & (y_1, y_3)\in \Sigma_{\theta_0}^{\pm}, \\
			\p_{y_3} W_4 (y_1, y_2, \pm L) =  \p_{y_3} W_1 (r_b, y_2, \pm L) = 0, & (y_1, y_2)\in\Sigma_L^{\pm}.
		\end{cases}
		\ee
		In addition, it follows  from \eqref{6-2} that
		\begin{equation}\label{pf34}
			W_7 (y') = \f 1{b_1} W_1 (r_b, y') - \f 1{b_1} R_1 (\hat{{\bf W}} (r_b, y'), \hat{W}_7 (y')).
		\end{equation}
		This implies that $W_7 \in C^{2, \alpha} (\overline{D})$ satisfies
		\begin{equation}\label{bc47}
			\begin{cases}
				\p_{y_2} W_7 (\pm \theta_0, y_3) = 0, &  y_3 \in [-L,L], \\
				\p_{y_3} W_7 (y_2, \pm L) = 0, &  y_2 \in [-\theta_0,\theta_0].
			\end{cases}
		\end{equation}
		
		It remains to improve the regularity of  $ W_7 $ to be ${C^{3,\alpha}(\overline{D})}$. To this end, we introduce
		\begin{equation*}
			\begin{cases}
				F_2 (y') = \frac1{r_b}\p_{y_2} W_1 (r_b, y') - b_0 b_1 W_2 (r_b, y') - \frac1{r_b}\p_{y_2} R_1 (\hat{{\bf W}} (r_b, y'), \hat{W}_7 (y')) - b_1 g_2 (\hat{{\bf W}} (r_b, y'), \hat{W}_7 (y')), \\
				F_3 (y') = \p_{y_3} W_1 (r_b, y') - b_0 b_1 W_3 (r_b, y') - \p_{y_3} R_1 (\hat{{\bf W}} (r_b, y'), \hat{W}_7 (y')) - b_1 g_3 (\hat{{\bf W}} (r_b, y'), \hat{W}_7 (y')).
			\end{cases}
		\end{equation*}
		The boundary conditions in \eqref{M} imply that
		\begin{equation}\no
			\begin{cases}
				\p_{y_2} F_3 - \p_{y_3} F_2 = 0, & \text{in } D,\\
				\p_{y_2} F_2 + \p_{y_3} F_3 = 0, & \text{in } D, \\
				F_2 (\pm \theta_0 , y_3) = 0, &  y_3 \in [-L,L], \\
				F_3 (y_2, \pm L) = 0, &  y_2 \in [-\theta_0,\theta_0].
			\end{cases}
		\end{equation}
		Using Lemma \ref{lm1}, $F_2 = F_3 \equiv 0$ in $D$. This, together with \eqref{pf34}, yields that
		\begin{equation}\label{pf36}
			\begin{cases}
				\frac{1}{r_b}\p_{y_2} W_7 (y') = b_0 W_2 (r_b, y') + g_2 (\hat{{\bf W}} (r_b, y'), \hat{W}_7 (y')), & \mbox{in } D, \\
				\p_{y_3} W_7 (y') = b_0 W_3 (r_b, y') + g_3 (\hat{{\bf W}} (r_b, y'), \hat{W}_7 (y')), & \mbox{in } D.
			\end{cases}
		\end{equation}
		Therefore, $W_7 \in C^{3, \al} (\overline{D})$ satisfies  the following estimate
		\begin{equation}\label{est14}
			\begin{aligned}
				\| W_7 \|_{C^{3, \al} (\overline{D})}
				&\le C_* (\| W_1 (r_b, \cdot) \|_{C^{2, \al } (\overline{D})} + \| R_1 (\hat{{\bf W}} (r_b, \cdot), \hat{W}_7 (\cdot)) \|_{C^{2, \al } (\overline{D})}) \\
				& \q
				+ C_* \sum_{i = 2}^3 ( \| W_i (r_b, \cdot) \|_{C^{2, \al } (\overline{D})} + \| g_i (\hat{{\bf W}} (r_b, \cdot), \hat{W}_7(\cdot)) \|_{C^{2, \al } (\overline{D})} ) \\
				& \le
				C_* (\sigma + \sigma \| ({\bf \hat{W}}, \hat{W}_7) \|_{\mathfrak{W}} + \| ({\bf \hat{W}}, \hat{W}_7) \|_{\mathfrak{W}}^2)
				\le C_* (\sigma + \sigma \delta_0 + \delta_0^2).
			\end{aligned}
		\end{equation}
		Differentiating the first (second) equation in \eqref{pf36} with respect to $y_2$ (resp. $y_3$) twice and evaluating at $y_2 = \pm \theta_0$ (resp. $y_3 = \pm L$) together with \eqref{j211} and \eqref{W123-cp}, one gets
		\begin{equation}\label{bc48}
			\begin{cases}
				\p_{y_2}^3 W_7 ( \pm \theta_0, y_3) = 0, &  y_3 \in [-L ,L], \\
				\p_{y_3}^3 W_7 ( y_2, \pm L) = 0, &  y_2 \in [-\theta_0 ,\theta_0].
			\end{cases}
		\end{equation}
		Combining the estimates \eqref{W6-est}, \eqref{6-12}, \eqref{W123-est}, \eqref{W4-est} and \eqref{est14}, there holds
		\begin{equation}\no
			\| ({\bf W}, W_7) \|_{\mathfrak{W}} = \sum_{i = 1}^6 \| W_i \|_{C^{2, \al} (\overline{\mn})} + \| W_7 \|_{C^{3, \al} (\overline{D})} \le C_0^* (\sigma + \sigma \delta_0 + \delta_0^2) \le C_0^* (\sigma + \delta_0^2).
		\end{equation}
		Choosing $\delta_0 = \sqrt{\sigma}$ and letting $\sigma < \sigma_0 = \f 1{4 (C_0^*)^2}$, we derive that $\| ({\bf W}, W_7) \|_{\mathfrak{W}} \le 2 C_0^* \sigma \le \delta_0$. This, together with  the compatibility conditions \eqref{W6-cp}, \eqref{W5-cp}, \eqref{W123-cp}, \eqref{bc46}, \eqref{bc47}, \eqref{bc48}, yields  $({\bf W}, W_7) \in \mathfrak{W}$. We now can define the operator $\mc{T} ({\bf \hat{W}}, \hat{W}_7) = ({\bf W}, W_7)$, which maps $\mathfrak{W}$ to itself.
		
		\par {\bf Step 7. The contraction of the operator $\mathcal{T}$.}
		\par We finally verify that $\mathcal{T}$ is a contraction mapping under the following norm
		\begin{equation*}
			\| ({\bf W}, W_7) \|_{\mathfrak{U}} \co \sum_{i = 1}^6\| W_i \|_{C^{1, \al} (\overline\mn)} + \| W_7\|_{C^{2, \al} (\overline{D})},
		\end{equation*}
		where the fixed point of $\mathcal{T}$ in $\mathfrak{W}$ is exactly the solution we seek. For any two elements $(\hat{\mathbf{W}}^i, \hat{W}_7^i) \in \mathfrak{W}$ ($i=1,2$), we denote $(\mathbf{W}^i, W_7^i) = \mathcal{T} (\hat{\mathbf{W}}^i, \hat{W}_7^i)$. Set
		\begin{equation*}
			(\hat{\mathbf{W}}^d, \hat{W}_7^d) = (\hat{\mathbf{W}}^1, \hat{W}_7^1) - (\hat{\mathbf{W}}^2, \hat{W}_7^2), \ \ ({\mathbf{W}}^d, {W}_7^d) = (\mathbf{W}^1, W_7^1) - (\mathbf{W}^2, W_7^2).
		\end{equation*}
		It follows from  \eqref{6-3} that  $W_6^d$ satisfies
		\begin{equation}\label{z6-bvp-simplified}
			\begin{cases}
				\left(\partial_{y_1} + I_2(\hat{\mathbf{W}}^1, \hat{W}_7^1)\partial_{y_2} + I_3(\hat{\mathbf{W}}^1, \hat{W}_7^1)\partial_{y_3}\right)W_6^d
				= I_4^1 W_6^d + F_6 , &  \mbox{in } \mn,\\
				W_6^d(r_b,y') = G_6 (y'), &  y'\in D,
			\end{cases}
		\end{equation}
		where
		\begin{align*}
			& F_6 = \left(I_4^1 - I_4^2\right)W_6^2 - \left(I_2^1 - I_2^2\right)\partial_{y_2}W_6^2 - \left(I_3^1 - I_3^2\right)\partial_{y_3}W_6^2, \\
			& G_6 (y') =Y^-\big(r_b+\hat{W}_7^1(y'),y'\big) - Y^-\big(r_b+\hat{W}_7^2(y'),y'\big),
		\end{align*}
		with $I_j^k = I_j(\hat{\mathbf{W}}^k, \hat{W}_7^k)$ for $j=2,3,4$ and $k=1,2$.
		The explicit solution for $W_6^d$ is derived via the method of characteristics and given by
		\begin{equation}\label{z6-expr}
			\begin{split}
				W_6^d(y_1, y')
				&= \exp\left( \int_{r_b}^{y_1} I_4^1\big(t, \hat{y}_2^1(t; y_1, y'), \hat{y}_3^1(t; y_1, y')\big) dt \right)  G_6\bigg(\eta_2^1(y_1, y'), \eta_3^1(y_1, y')\bigg)  \\
				&+ \int_{r_b}^{y_1} F_6\big(s, \hat{y}_2^1(s; y_1, y'), \hat{y}_3^1(s; y_1, y')\big)
				\exp\left( \int^{y_1}_s I_4^1\big(t, \hat{y}_2^1(t; y_1, y'), \hat{y}_3^1(t; y_1, y')\big) dt \right) ds .
			\end{split}
		\end{equation}
		Here, $(t, \hat{y}_2^i(t; y_1, y'), \hat{y}_3^i(t; y_1, y'))$ represents the trajectory associated with the characteristic vector $(1, I_2^i, I_3^i)$ with $i=1,2$, and $\eta_i^1 = \hat{y}_i^1(r_b; y_1, y')$ for $i=2,3$.
		Then $W_6^d$ satisfies the estimate
		\begin{equation}\label{z6-est}
			\begin{split}
				\| W_6^d \|_{C^{1, \alpha} (\overline{\mn})} & \leq C_* \sigma \| \hat W_7^d \|_{C^{1, \alpha} (\overline{D})} + C_* \| (\hat{\mathbf{W}}^2, \hat{W}_7^2) \|_{\mathfrak{W}} \| (\hat{\mathbf{W}}^d, \hat{W}_7^d) \|_{\mathfrak{U}} \\
				& \leq C_* (\sigma + \delta_0) \| (\hat{\mathbf{W}}^d, \hat{W}_7^d) \|_{\mathfrak{U}} .
			\end{split}
		\end{equation}
		\par Next, $W_5^d $ and $W_4^d $ satisfy
		
		\begin{equation}\label{z5_eq}
			\begin{cases}
				\left(\partial_{y_1} + I_2^1 \partial_{y_2} + I_3^1 \partial_{y_3}\right)W_5^d = F_5, &\mbox{in } \mn, \\
				W_5^d(r_b,y') = G_5 (y'),& y'\in D,
			\end{cases}
		\end{equation}
		and
		\begin{equation}\label{z4_eq}
			\begin{cases}
				\left(\partial_{y_1} + I_2^1 \partial_{y_2} + I_3^1 \partial_{y_3}\right)W_4^d = F_4 , &\mbox{in } \mn, \\
				W_4^d(r_b,y') = G_4 (y'), &y'\in D,
			\end{cases}
		\end{equation}
		where
		\begin{align*}
			& F_4  =  I_6^1 - I_6^2 + (I_2^2 - I_2^1)\partial_{y_2}W_4^2 + (I_3^2 - I_3^1)\partial_{y_3}W_4^2, \\
			& F_5  = I_5^1 - I_5^2 + (I_2^2 - I_2^1)\partial_{y_2}W_5^2 + (I_3^2 - I_3^1)\partial_{y_3}W_5^2, \\
			& G_4 (y') =  b_2 \hat{z}_7(y') + R_2\big(\hat{\mathbf{W}}^1(r_b,y'),\hat{W}_7^1(y')\big) - R_2\big(\hat{\mathbf{W}}^2(r_b,y'),\hat{W}_7^2(y')\big),\\
			& G_5 (y') =  B^-\big(r_b+\hat{W}_7^1(y'),y'\big) - B^-\big(r_b+\hat{W}_7^2(y'),y'\big).
		\end{align*}
		Here$I_5^k=I_5(W_6^k,\hat{\mathbf{W}}^k,\hat{W}_7^k)$ and $I_6^k=I_6(W_6^k,\hat{\mathbf{W}}^k,\hat{W}_7^k)$ with $k=1,2$.
		By the method of characteristics and \eqref{6-11}, we obtain the following expressions
		\begin{align}
			\label{z4-final-expr-simple}
			W_4^d(y_1,y')&= \frac{b_2}{b_1} W_1^d (r_b, y') + R_4 \bigg(\hat{\mathbf{W}}^1(r_b,\eta_2^1 (y_1,y'), \eta_3^1 (y_1,y')),\hat{W}_7^1(\eta_2^1 (y_1,y'), \eta_3^1 (y_1,y'))\bigg) \notag \\
			&\quad - R_4 \bigg(\hat{\mathbf{W}}^2(r_b,\eta_2^2 (y_1,y'), \eta_3^2 (y_1,y')),\hat{W}_7^2(\eta_2^2 (y_1,y'), \eta_3^2 (y_1,y'))\bigg) \notag \\
			&\quad +\int_{r_b}^{y_1} F_4\bigg(\tau,\hat{y}_2^1(\tau;y_1,y'),\hat{y}_3^1(\tau;y_1,y')\bigg)d\tau, \\
			\label{z5-expr}
			W_5^d(y_1,y') &= G_5\left(\eta_2^1(y_1,y'),\eta_3^1(y_1,y')\right) + \int_{r_b}^{y_1} F_5\bigg(\tau, \hat{y}_2^1(\tau;y_1,y'), \hat{y}_3^1(\tau;y_1,y')\bigg) d\tau.
		\end{align}
		Moreover, $W_5^d$ satisfy the estimate
		\begin{equation}\label{est_z5_C1a}
			\begin{split}
				\| W_5^d \|_{C^{1,\alpha}(\overline{\mn})} & \le C_* \sigma \| \hat W_7^d  \|_{C^{1,\alpha}(\overline{D})} + C_* \| (\hat{\mathbf{W}}^2, \hat{W}_7^2) \|_{\mathfrak{W}} \bigg( \sum_{i =1}^6 \| \hat W_i^d \|_{C^{1,\alpha}(\overline{\mn})} + \| \hat W_7^d \|_{C^{1,\alpha}(\overline{D})} \bigg) \\
				& \leq C_* (\sigma + \delta_0) \| (\hat{\mathbf{W}}^d, \hat{W}_7^d) \|_{\mathfrak{U}}.
			\end{split}
		\end{equation}
		
		\par Set $$\tilde{\omega}_i^d = \tilde{\omega}_i^1 - \tilde{\omega}_i^2, \ \ i = 1,2,3,$$ where $(\tilde{\omega}_1^k, \tilde{\omega}_2^k, \tilde{\omega}_3^k)$ is the vorticity associated with $(\mathbf{W}^k, W_7^k)$ for $k =1,2$.
		Then it follows from \eqref{om7-co} that
		\begin{equation}\label{L1_eq}
			\begin{cases}
				\left(\partial_{y_1} + I_2^1 \partial_{y_2} + I_3^1 \partial_{y_3}\right)\tilde{\omega}_1^d + \mu^1 \tilde{\omega}_1^d = G_1^d , & \mbox{in }\mn , \\
				\tilde{\omega}_1^d(r_b,y') =R_5^d (y') ,  & y'\in D,
			\end{cases}
		\end{equation}
		where
		\begin{align*}
			& G_1^d = G_1^1 - G_1^2 + (I_2^2-I_2^1)\p_{y_2}\tilde{\omega}_1^2 + (I_3^2-I_3^1)\p_{y_3}\tilde{\omega}_1^2 + (\mu^2-\mu^1)\tilde{\omega}_1^2, \\
			& R_5^d (y') =  R_5(\hat{\mathbf{W}}^1(r_b,y'),\hat{W}_7^1(y')) - R_5(\hat{\mathbf{W}}^2(r_b,y'),\hat{W}_7^2(y')),
		\end{align*}
		with $\mu^k=\mu(\hat{\mathbf{W}}^k,\hat{W}_7^k)$, $G_1^k=G_1(\hat{\mathbf{W}}^k,\hat{W}_7^k)$ ($k=1,2$).
		Using the method of characteristics, one has
		\begin{equation}\label{L1-expr}
			\begin{split}
				\tilde{\omega}_1^d(y_1,y')  = & \exp\left( - \int_{r_b}^{y_1} \mu^1\big(s, \hat{y}(s; y_1,y')\big) ds \right) R_5^d \left(\eta_2^1(y_1,y'),\eta_3^1(y_1,y')\right) \\
				& + \int_{r_b}^{y_1} \exp\left( - \int^{y_1}_{\tau} \mu^1\big(s, \hat{y}(s; y_1,y')\big) ds \right) G_1^d\big(\tau, \hat{y}(\tau; y_1,y')\big) d\tau.
			\end{split}
		\end{equation}
		Then the following estimate holds
		\begin{equation}\label{L1-est}
			\begin{split}
				\|\tilde{\omega}_1^d\|_{C^{\alpha} (\overline{\mn})}  & \leq C_* \left( \|R_5^d \|_{C^{\alpha} (\overline{D})} + \| G_1^d \|_{C^{\alpha} (\overline{\mn})}  \right) \\
				& \leq C_* \left( \sigma + \| \tilde{\omega}_1^2 \|_{C^{1, \alpha} (\overline{\mn})} + \| (\hat{\mathbf{W}}^2, \hat{W}_7^2) \|_{\mathfrak{W}} \right) \| (\hat{\mathbf{W}}^d, \hat{W}_7^d) \|_{\mathfrak{U}} \\
				& \leq C_* (\sigma + \delta_0) \| (\hat{\mathbf{W}}^d, \hat{W}_7^d) \|_{\mathfrak{U}}.
			\end{split}
		\end{equation}
		
		Now we can turn to the estimate of $ W_i^d$ ($i=1,2,3$).  It follows from \eqref{defor-curl-Pi} that
		\begin{equation}\label{z_i_bvp}
			\begin{cases}
				d_1(y_1)\p_{y_1} W_1^d + \frac1{y_1}\p_{y_2} W_2^d + \p_{y_3} W_3^d+\frac{W_1^d}{y_1}+d_2(y_1)W_1^d =  G_0^d, & \text{in } \mn, \\
				\frac{1}{y_1}\p_{y_2} W_3^d-\p_{y_3}W_2^d + \partial_{y_1} \Pi^d =  G_2^d, & \text{in } \mn, \\
				\p_{y_3}  W_1^d- \p_{y_1} W_3^d + d_5(y_1) \p_{y_3}  W_1^d(r_b,y') + \frac{1}{y_1}\partial_{y_2} \Pi^d =  G_3^d, & \text{in } \mn,\\
				\p_{y_1}  W_2^d+\frac{ W_2^d}{y_1}-\frac{1}{y_1}\p_{y_2}  W_1^d -\frac{d_5(y_1)}{y_1}\p_{y_2} W_1^d(r_b,y') + \partial_{y_3} \Pi^d = G_4^d, & \text{in } \mn,\\
				\frac{1}{r_b^2}\p_{y_2}^2  W_1^d(r_b,y')+\p_{y_3}^2  W_1^d(r_b,y') = b_0b_1\left(\frac{1}{r_b}\p_{y_2}  W_2^d+\p_{y_3}  W_3^d\right)(r_b,y')+ q_1^d, & y'\in D, \\
				W_2^d(y_1, \pm \theta_0, y_3) = \Pi^d (y_1, \pm \theta_0, y_3) = 0,  & (y_1, y_3) \in \Sigma_{ \theta_0}^\pm,\\
				W_3^d(y_1, y_2, \pm L ) = \Pi^d (y_1, y_2, \pm L ) = 0, & (y_1, y_2) \in \Sigma_{ L}^\pm, \\
				W_1^d(r_1,y') + d_5(r_1) W_1^d(r_b,y') =  q_4^d, & y' \in D, \\
				( \frac1{r_b}\p_{y_2}W_1^d - b_0 b_1W_2^d) (r_b,\pm\theta_0,y_3) = 0 , & y_3 \in [-L,L], \\
				(\p_{y_3} W_1^d - b_0 b_1 W_3^d)(r_b,y_2,\pm L) = 0, & y_2 \in [-\theta_0,\theta_0], \\
				\Pi^d(r_b,y') =\Pi^d(r_1,y') = 0, & y'\in D,
			\end{cases}
		\end{equation}
		where
		\begin{align*}
			& \Pi^d = \Pi^1 - \Pi^2,
			\quad  G_0^d = G_0(W_5^1,W_6^1,\hat{\mathbf{W}}^1,\hat{W}_7^1) - G_0(W_5^2,W_6^2,\hat{\mathbf{W}}^2,\hat{W}_7^2), \\
			&  G_2^d = G_2(\hat{\mathbf{W}}^1,\hat{W}_7^1) - G_2(\hat{\mathbf{W}}^2,\hat{W}_7^2),
			\quad G_3^d = G_3(W_5^1,\hat{\mathbf{W}}^1,\hat{W}_7^1) - G_3(W_5^2,\hat{\mathbf{W}}^2,\hat{W}_7^2), \\
			&  G_4^d = G_4(W_5^1,\hat{\mathbf{W}}^1,\hat{W}_7^1) - G_4(W_5^2,\hat{\mathbf{W}}^2,\hat{W}_7^2), \\
			&  q_1^d = q_1(\hat{\mathbf{W}}^1(r_b,y'),\hat{W}_7^1(y')) - q_1(\hat{\mathbf{W}}^2(r_b,y'),\hat{W}_7^2(y')), \\
			&  q_4^d = q_4(\hat{\mathbf{W}}^1(r_1,y'),\hat{W}_7^1(y')) - q_4(\hat{\mathbf{W}}^2(r_1,y'),\hat{W}_7^2(y')).
		\end{align*}
		By similar analysis in \textbf{Step 5}, one has
		\begin{equation}\label{z123-est}
			\begin{split}
				\sum_{i = 1}^3 \| W_i^d \|_{C^{1, \alpha} (\overline{\mn})} & \leq C_* \bigg( \|  G_0^d \|_{C^{\alpha} (\overline{\mn})}+\sum_{i=2}^4\|  G_i^d \|_{C^{\alpha} (\overline{\mn})} + \|  q_1^d \|_{C^{0, \alpha} (\overline{D})} +  \|  q_4^d \|_{C^{1, \alpha} (\overline{D})}  \bigg)  \\
				& \leq C_* \bigg(\sigma + \sum_{i = 1}^2 \| (\hat{\mathbf{W}}^i, \hat{W}_7^i) \|_{\mathfrak{W}} \bigg) \| (\hat{\mathbf{W}}^d, \hat{W}_7^d) \|_{\mathfrak{U}} \leq C_* (\sigma + \delta_0) \| (\hat{\mathbf{W}}^d, \hat{W}_7^d) \|_{\mathfrak{U}}.
			\end{split}
		\end{equation}
		
		We proceed to estimate $W_4^d$. From \eqref{z4-final-expr-simple}, it is necessary for us to first estimate the following term:
		\begin{equation*}
			\begin{aligned}
				& \left\| \left( \hat{W}_7^1\big(\eta_2^1 (y_1,y'), \eta_3^1 (y_1,y')\big)-\hat{W}_7^1(y') \right)
				- \left( \hat{W}_7^2\big(\eta_2^2 (y_1,y'), \eta_3^2 (y_1,y')\big) - \hat{W}_7^2(y') \right) \right\|_{C^{1, \alpha} (\overline{\mn})} \\
				\leq & \left\| \hat{W}_7^d \big(\eta_2^1 (y_1,y'), \eta_3^1 (y_1,y')\big) - \hat{W}_7^d(y') \right\|_{C^{1, \alpha} (\overline{\mn})} \\
				&\quad+ \left\| \hat{W}_7^2\big(\eta_2^1 (y_1,y'), \eta_3^1 (y_1,y')\big) -  \hat{W}_7^2\big(\eta_2^2 (y_1,y'), \eta_3^2 (y_1,y')\big) \right\|_{C^{1, \alpha} (\overline{\mn})}\\
				\leq &
				C_* \sum_{i = 2}^3 \left\| \eta_i^1 - y_i \right\|_{C^{1, \alpha} (\overline{\mn})} \left\| \hat{W}_7^d  \right\|_{C^{2, \alpha} (\overline{D})} + \sum_{i = 2}^3 \left\| \eta_i^1 - \eta_i^2 \right\|_{C^{1, \alpha} (\overline{\mn})} \left\| \hat{W}_7^2 \right\|_{C^{2, \alpha} (\overline{D})}.
			\end{aligned}
		\end{equation*}
		Let $\Gamma_i (\tau; y_1,y') = \hat{y}_i^1 (\tau; y_1,y')- \hat{y}_i^2 (\tau; y_1,y') $ and $\Gamma_i (r_b;y_1,y' ) = \eta_i^1 (y_1,y') - \eta_i^2 (y_1,y')$ for $i=2,3$. Then we have
		\begin{equation}\label{gamma_int_form_simple}
			\begin{aligned}
				\Gamma_2 (\tau; y_1,y') &= \int_{y_1}^{\tau} b_{22} (s;y_1,y') \Gamma_2 (s; y_1,y') + b_{23} (s; y_1,y') \Gamma_3 (s; y_1,y') ds + \int_{y_1}^{\tau} b_2  (s; y_1,y') ds, \\
				\Gamma_3 (\tau; y_1,y') &= \int_{y_1}^{\tau} b_{32} (s; y_1,y') \Gamma_2 (s; y_1,y') + b_{33} (s; y_1,y') \Gamma_3 (s; y_1,y') ds + \int_{y_1}^{\tau} b_3  (s; y_1,y') ds,
			\end{aligned}
		\end{equation}
		where $b_{ij}(s;y_1,y') \ (i, j = 2, 3)$ are functions of $\hat{\mathbf{W}}^1$ and $\hat{W}_7^1$, and $b_i(s;y_1,y') \ (i = 2, 3)$ are functions of $\hat{\mathbf{W}}^d$ and $\hat{W}_7^d$. It follows from Gronwall inequality that
		\begin{equation*}
			\sum_{i = 2}^3 \left\| \eta_i^1 - \eta_i^2 \right\|_{C^0 (\overline{\mn})} \leq C_* \bigg(  \sum_{i = 1}^6 \left\| \hat{W}_i^d \right\|_{C^0 (\overline{\mn})} + \| \hat{W}_7^d \|_{C^1 (\overline{D})}\bigg).
		\end{equation*}
		Similarly, one has
		\begin{equation*}
			\sum_{i = 2}^3 \left\| \eta_i^1 - \eta_i^2 \right\|_{C^{1, \alpha} (\overline{\mn})} \leq C_* \| (\hat{\mathbf{W}}^d, \hat{W}_7^d) \|_{\mathfrak{U}}.
		\end{equation*}
		Combining the above estimates leads to
		\begin{equation}\label{z4-est-final}
			\begin{split}
				\| W_4^d \|_{C^{1, \alpha} (\overline{\mn})}  & \leq C_* \left( \| W_1^d (r_b, \cdot) \|_{C^{1, \alpha} (\overline{D})} + \| R_4 (\hat{\mathbf{W}}^1,\hat{W}_7^1 ) - R_4 (\hat{\mathbf{W}}^2,\hat{W}_7^2 ) \|_{C^{1, \alpha} (\overline{D})} + \| F_4 \|_{C^{1, \alpha} (\overline{\mn})}  \right) \\
				& \leq C_* (\sigma + \delta_0)  \| (\hat{\mathbf{W}}^d, \hat{W}_7^d)  \|_{\mathfrak{U}}.
			\end{split}
		\end{equation}
		\eqref{6-2} implies that
		\begin{equation*}
			W_7^d(y') = \frac{1}{b_{1} }W_1^d(r_b,y')-\frac{1}{b_{1} }  \left( R_1(\h{\bf W}^1(r_b,y'),\h W_7^1(y')) - R_1(\h{\bf W}^2(r_b,y'),\h W_7^2(y')) \right), \quad y' \in D.
		\end{equation*}
		Thus there holds
		\begin{equation}\label{z7-est}
			\begin{split}
				\| W_7^d \|_{C^{1, \alpha} (\overline{D})} & \leq C_* \left( \| W_1^d (r_b, \cdot) \|_{C^{1, \alpha} (\overline{D})}  + \| R_1(\h{\bf W}^1(r_b,y'),\h W_7^1(y')) - R_1(\h{\bf W}^2(r_b,y'),\h W_7^2(y')) \|_{C^{1, \alpha} (\overline{D})}  \right) \\
				& \leq C_* (\sigma + \delta_0)  \|(\hat{\mathbf{W}}^d, \hat{W}_7^d) \|_{\mathfrak{U}}.
			\end{split}
		\end{equation}
		It follows from \eqref{pf36} that
		\begin{equation*}
			\begin{cases}
				\p_{y_2} W_7^d (y') = b_0 W_2^d (r_b, y') + g_2 (\hat{{\bf W}}^1 (r_b, y'), \hat{W}_7^1 (y')) - g_2 (\hat{{\bf W}}^2 (r_b, y'), \hat{W}_7^2 (y')), & \mbox{in } D, \\
				\p_{y_3} W_7^d  (y') = b_0 W_3^d  (r_b, y') + g_3 (\hat{{\bf W}}^1 (r_b, y'), \hat{W}_7^1 (y')) - g_3 (\hat{{\bf W}}^2 (r_b, y'), \hat{W}_7^2 (y')), & \mbox{in } D,
			\end{cases}
		\end{equation*}
		from which one gets
		\begin{equation}\label{z7-est-2}
			\begin{aligned}
				&\| \p_{y_2} W_7^d \|_{C^{1, \alpha} (\overline{D})} +  \| \p_{y_3}W_7^d \|_{C^{1, \alpha} (\overline{D})}\\
				& \leq C_* \sum_{i = 2}^3 \left( \| W_i^d (r_b, \cdot) \|_{C^{1, \alpha} (\overline{D})} + \| g_i  (\hat{{\bf W}}^1 (r_b, y'), \hat{W}_7^1 (y')) - g_i (\hat{{\bf W}}^2 (r_b, y'), \hat{W}_7^2 (y')) \|_{C^{1, \alpha} (\overline{D})} \right) \\
				& \leq C_* (\sigma + \delta_0) \| (\hat{\mathbf{W}}^d, \hat{W}_7^d)\|_{\mathfrak{U}}.
			\end{aligned}
		\end{equation}
		
		Combining \eqref{z6-est}, \eqref{est_z5_C1a},  \eqref{z123-est}, \eqref{z4-est-final}, \eqref{z7-est} and \eqref{z7-est-2} yields
		\begin{equation*}
			\| ({\mathbf{W}}^d, {W}_7^d)  \|_{\mathfrak{U}} \leq C_0^* (\sigma + \delta_0) \| (\hat{\mathbf{W}}^d, \hat{W}_7^d) \|_{\mathfrak{U}}.
		\end{equation*}
		Since $\delta_0 = \sqrt{\sigma}$, setting $\sigma < \sigma_0 = \frac{1}{16 (C_0^*)^2}$ then $\| ({\mathbf{W}}^d, {W}_7^d)  \|_{\mathfrak{U}} \leq \frac{1}{2} \| (\hat{\mathbf{W}}^d, \hat{W}_7^d) \|_{\mathfrak{U}}$. Thus $\mathcal{T}$ is a contraction mapping with respect to the norm $\|\cdot\|_{\mathfrak{U}}$. Consequently, there exists a unique fixed point $({\bf {W}}, {W}_7) \in \mathfrak{W}$ such that $\mc{T} ({\bf {W}}, {W}_7) = ({\bf W}, W_7) $.

		Finally, we prove that the auxiliary function $\Pi$ associated with the fixed point $({\bf {W}}, {W}_7)$ satisfies $\Pi \equiv 0$ in $\mn$. It follows from the definition of $G_i$ that
		\begin{equation}
			\begin{cases}
				\begin{aligned}
					& - \p_{y_1} \Pi = J_2^{W_7} W_3 - J_3^{W_7} W_2 - \tilde{\om}_1,  \\
					&- \frac1{y_1}\p_{y_2} \Pi = J_3^{W_7} W_1 - J_1^{W_7} W_3 -\frac {W_2 \e\omega_1 + J_3^{W_7}W_5}{\bar U(J_0^{W_7})+W_1}+\frac{e^{( S_0+W_4)}\e\rho^{\gamma-1}}{(\gamma-1)(\bar U(J_0^{W_7})+W_1)} J_3^{W_7}W_4, \\
					&- \p_{y_3} \Pi = J_1^{W_7} W_2 +\frac{W_2}{J_0^{W_7}}- J_2^{W_7} W_1 - \frac {W_3 \e\omega_1 - J_2^{W_7}W_5}{\bar U(J_0^{W_7})+W_1}-\frac{e^{( S_0+W_4)}\e\rho^{\gamma-1}}{(\gamma-1)(\bar U(J_0^{W_7})+W_1)} J_2^{W_7}W_4.
				\end{aligned}
			\end{cases}
		\end{equation}
		These, together with \eqref{om7}, show that
		\begin{equation*}
			J_1^{W_7} J_2^{W_7} - J_2^{W_7} J_1^{W_7}=-\frac{J_2^{W_7}}{J_0^{W_7}}, \, \ \
			J_2^{W_7} J_3^{W_7} = J_3^{W_7} J_2^{W_7}, \, \ \
			J_1^{W_7} J_3^{W_7} = J_3^{W_7} J_1^{W_7}.
		\end{equation*}
		Therefore, one gets
		\begin{equation*}
			- J_1^{W_7} (\p_{y_1} \Pi) - J_2^{W_7} \left(\frac1{y_1}\p_{y_2} \Pi\right) - J_3^{W_7} (\p_{y_3} \Pi)-\frac{\p_{y_1} \Pi}{J_0^{W_7}} = 0, \q \text{in } \mn.
		\end{equation*}
		Note that $\|W_7 \|_{C^{3, \alpha} (\overline{D})} \leq \delta_0$. For sufficiently small $\delta_0$, $\Pi$ satisfies a second-order uniformly elliptic system without zero-order terms. Moreover, combined with the boundary conditions for $\Pi$ on $\partial \mn$, we conclude that $\Pi \equiv 0$ in $\mn$ by employing the maximum principle and Hopf's lemma. Therefore, $({\bf {W}}, {W}_7)$ is the desired solution, and we thereby complete the proof of Theorem \ref{main}.
		
		\par {\bf Acknowledgement.}   The research of Zihao Zhang is partially supported by  the Postdoctoral Fellowship Program of the China Postdoctoral Science Foundation (Grant No. GZB20250719) and the China Scholarship Council (Grant No. 202606170019).
		\par {\bf Data availability.} No data was used for the research described in the article.
		\par {\bf Conflict of interest.} This work does not have any conflicts of interest.

	\end{document}